\title{Quasi-invariance for $\SLE$ welding measures}
\date{}
\newcolumntype{P}[1]{>{\centering\arraybackslash}p{#1}}
\setlist[enumerate]{topsep = 1ex, leftmargin=1cm, itemsep= -2pt}
\setlist[itemize]{topsep = 1ex, leftmargin=1cm, itemsep= -2pt}
\let\OLDthebibliography\thebibliography
\renewcommand\thebibliography[1]{
  \OLDthebibliography{#1}
  \setlength{\parskip}{1pt}
  \setlength{\itemsep}{2pt}
}
\newtheorem{thm}{Theorem}[section]
\newtheorem{cor}[thm]{Corollary}
\newtheorem{lem}[thm]{Lemma}
\newtheorem{prop}[thm]{Proposition}
\theoremstyle{definition} 
\newtheorem{df}[thm]{Definition}
\newtheorem{remark}[thm]{Remark}
\numberwithin{equation}{section}
\global\long\def\ii{\mathfrak{i}}
\renewcommand{\liminf}{\varliminf}
\newcommand{\abs}[1]{\left\lvert #1 \right \rvert}
\newcommand{\norm}[1]{\lVert #1 \rVert}
\newcommand{\mc}[1]{\mathcal{#1}}
\newcommand{\m}[1]{\mathbb{#1}}
\newcommand{\ms}[1]{\mathscr{#1}}
\newcommand{\mr}[1]{\mathrm{#1}}
\renewcommand\Re{\operatorname{Re}}
\def\X{\Gamma}
\def\XX{\Gamma}
\def\CR{\operatorname{RM}}
\def \Un {\operatorname{U}}
\def\SLE{\operatorname{SLE}}
\def \Homeo {\operatorname{Homeo}}
\def\mob{\mathrm{M\ddot ob}}
\def\Diff{\operatorname{Diff}}
\def\QS{\operatorname{QS}}
\def\Sy{\operatorname{S}}
\def\Po{\operatorname{P}}
\def\WP{\operatorname{WP}}
\def \WP {\operatorname{WP}}
\def \HS {\operatorname{HS}}
\def \QS {\operatorname{QS}}
\def \COV {\operatorname{Cov}}
\def \TR {\operatorname{Tr}}
\def \SP {\operatorname{Sp}}
\def\a{\alpha}
\def\b{\beta}
\def\g{\gamma}
\def\G{\Gamma}
\def\d{\delta}
\def\e{\epsilon}
\def\t{\theta}
\def\T{\Theta}
\def\l{\lambda}
\def\k{\kappa}
\def\s{\sigma}
\def\o{\omega}
\def\O{\Omega}
\def\vare{\varepsilon}
\def\Chat{\hat{\m{C}}}
\def\dd{\mathrm{d}}
\def\var{\mathrm{Var}}
\def\1{\mathbf{1}}
\author{Shuo Fan\thanks{\protect\url{shuofan.math@gmail.com} Tsinghua University, Beijing, China; IHES, Bures-sur-Yvette, France}  \qquad Jinwoo Sung\thanks{\protect\url{jsung@math.uchicago.edu} University of Chicago, Chicago, IL, USA}}
\begin{document}

\maketitle
\begin{abstract}
A large class of Jordan curves on the Riemann sphere can be encoded by circle homeomorphisms via conformal welding, among which we consider the welding homeomorphism of the random SLE loops and the Weil--Petersson class of quasicircles. 
It is known from the work of Carfagnini and Wang \cite{CW24} that the Onsager--Machlup action functional of SLE loop measures --- the Loewner energy --- coincides with the K\"ahler potential of the unique right-invariant K\"ahler metric on the group of Weil--Petersson circle homeomorphisms. This identity suggests that the group structure given by the composition shall play a prominent role in the law of SLE welding, which is so far little understood.

 In this paper, we show a Cameron--Martin type result for random weldings arising from Gaussian multiplicative chaos, especially the SLE welding measures, with respect to the natural group action by Weil--Petersson circle homeomorphisms. More precisely, we show that these welding measures are quasi-invariant when pre- or post-composing the random welding by a fixed Weil--Petersson circle homeomorphism. Our proof is based on the characterization of the composition action in terms of Hilbert--Schmidt operators on the Cameron--Martin space of the log-correlated Gaussian field and the description of the SLE welding as the welding of two independent Liouville quantum gravity (LQG) disks. 
\end{abstract}

\tableofcontents
\section{Introduction}
\subsection{Background and motivation}
The Schramm--Loewner evolution (SLE$_\kappa$) loop measure is a one-parameter family of $\s$-finite measures indexed by $\kappa \in (0,8)$ on the space of non-self-crossing loops on the Riemann sphere. For $\k \in (0,4]$, the SLE$_\k$ loop measure is supported on Jordan curves. SLE curves first arose as interfaces in the scaling limits of critical lattice models \cite{Sch00,SW01,LSW04LERWUST,SS06-6,SS09-4,CDCH,ang2024sleloopmeasureliouville} and the loop version of SLE was constructed and studied in \cite{KS07,werner_measure,MR3,Benoist_loop,zhan2020sleloop}.
    
    The group of Weil--Petersson homeomorphisms, denoted $\WP(\m S^1)$, is a subgroup of quasisymmetric circle homeomorphisms introduced in \cite{cui00,TT06}. The most straightforward characterization of $\WP(\m S^1)$ is that $\varphi \in \WP(\m S^1)$ if and only if $\varphi$ is absolutely continuous and $\log \abs{\varphi'}$ belongs to the fractional Sobolev space $H^{1/2}(\m S^1)$ \cite{Shen18}. Takhtajan and Teo \cite{TT06} showed that the Weil--Petersson Teichm\"uller space $T_0(1)$, which is the quotient space of $\WP(\m S^1)$ up to post-composition by M\"obius transformations, carries an essentially unique K\"ahler structure that is right-invariant (namely, invariant under post-composition by elements in $\WP(\m S^1)$). 

A close relationship between these two objects was first observed by Yilin Wang \cite{W2}, who showed the equivalence between the Loewner energy  --- the large deviation rate function for SLE$_\k$ as $\k\to 0$ \cite{W1, RW, PW24_LDP_multichordal} --- and the universal Liouville action --- the unique right-invariant K\"ahler potential on the Weil--Petersson Teichm\"uller space $T_0(1)$ found by Takhtajan and Teo \cite{TT06}. Recently, the Loewner energy was further proven to be the Onsager--Machlup functional of the SLE$_\k$ loop measure for $\k \in (0,4]$ in the work of Carfagnini and Wang \cite{CW24}, thus extending the connection beyond the semiclassical regime. Hence, it is natural to wonder how the K\"ahler and group structures on the Weil--Petersson Teichm\"uller space manifest on the SLE$_\k$ loop measure. In this work, we concentrate on the latter part of the question: i.e., quasi-invariance under the composition of circle homeomorphisms given by conformal welding.

The Loewner energy of a Jordan curve is defined as the Dirichlet energy of its driving function \cite{W1,RW,SungWang}. The driving function of an SLE$_\k$ loop is $\sqrt \k$ times the two-sided Brownian motion, whose Cameron--Martin space consists of functions with finite Dirichlet energy. Thus, the loops with finite Loewner energy, which are Weil--Petersson quasicircles as identified in \cite{W2}, can be viewed as the Cameron--Martin space of SLE$_\k$ loop measures for the ``addition'' of driving functions.  
Our result shows that despite the nonlinearity of the composition of welding homeomorphisms as opposed to the addition of driving functions, $\WP(\m S^1)$ behaves as the Cameron--Martin space of the random welding homeomorphism corresponding to SLE$_\k$ loops under the natural group action given by composition.

\subsection{Main result}
To describe our main result, we give a brief overview of conformal welding.
Given an oriented Jordan curve $\eta$ on the Riemann sphere $\Chat$, let $f : \m D \rightarrow \Omega$ and $g : \m D^* \rightarrow \Omega^*$ denote any conformal maps from the unit disk $\m D = \{z \in \Chat: |z|<1\}$ and the outer unit disk $\m D^* := \Chat \setminus \overline{\m D}$ onto the bounded and unbounded components $\Omega$ and $\Omega^*$ of $\Chat \setminus \eta$, respectively. A classic result of Carath\'eodory states that $f$ and $g$ extend to homeomorphisms on the closed disks $\overline{\m D}$ and $\overline{\m D^*}$, respectively. The map $\psi:=(g^{-1} \circ f)|_{\m S^1}$ is then an orientation-preserving homeomorphism on the unit circle $\m S^1$ to itself. We call any homeomorphism that arises in this way a \textit{welding}. 

Note that for any Jordan curve $\eta$ and a M\"obius map $\o \in \mob (\Chat)$ of the Riemann sphere $\Chat$, the image $\o(\eta)$ has the same welding as $\eta$. On the other hand, we may pre-compose them by M\"obius maps fixing $\m S^1$, the space of which we denote $\mob(\m S^1)$. Hence, conformal welding is better described in terms of the correspondence 
    \begin{equation}\label{Int eq:corres}
    \mob (\Chat)\backslash\{\text{Oriented~Jordan~curves}\} \rightarrow
      \mob(\m S^1)\backslash\Homeo_+(\m S^1)/\mob(\m S^1)  .
    \end{equation}
    This correspondence is neither injective nor onto; see \cite{Bishop07} and the references therein. (We note a recent article \cite{Rodriguez-two-weldings} that describes every circle homeomorphism as a composition of two welding homeomorphisms.) The map \eqref{Int eq:corres} is injective when restricting to \textit{conformally removable curves} (see Section \ref{section:removability}). For example, quasicircles (images of the unit circle under quasiconformal homeomorphisms of the Riemann sphere $\Chat$) are conformally removable \cite{BA56}, and SLE$_\kappa$ curves are almost surely conformally removable for $\kappa \in (0,\k_0)$ where $\k_0\in(4,8)$ \cite{JS-removability,Rohde_Schramm,kavvadias2022conformal,kms-sle-removability}. However, there are no known geometric or analytic characterizations that are equivalent to conformal removability \cite{CR_hard}.

  In this work, we will only consider conformally removable curves, whose weldings comprise the space $\CR(\m S^1)$. We endow $\CR(\m S^1)$ with the topology of joint compact convergence for the Riemann maps $f$ and $g$ (i.e., Carath\'eodory topology for both components of $\Chat \setminus \eta$). Let us first make the following observation, which allows us to consider the group of quasisymmetric homeomorphisms acting measurably on the random weldings corresponding to SLE$_\kappa$ loops.
  
    \begin{prop}[See Propositions~\ref{continuous group action} and \ref{continuous group action special point}] \label{Int thm: main thm1 removability}
    The pre- and post-compositions of conformally removable weldings by quasisymmetric homeomorphisms are continuous.
  \end{prop}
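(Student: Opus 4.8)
The plan is to realize pre-composition (and, symmetrically, post-composition) by a fixed quasisymmetric homeomorphism $h$ as a quasiconformal surgery on the Jordan curve, thereby reducing both the well-definedness and the continuity of the operation $\psi \mapsto \psi\circ h$ on $\CR(\m S^1)$ to the continuous dependence of the normalized solution of the Beltrami equation on its coefficient. First I fix a quasiconformal extension $H\colon\overline{\m D}\to\overline{\m D}$ of $h$ and let $m$ be the Beltrami coefficient of $H^{-1}$ on $\m D$, so $\norm{m}_\infty<1$. Given a conformally removable curve $\eta$ with conformal maps $f\colon\m D\to\Omega$, $g\colon\m D^*\to\Omega^*$ and welding $\psi=(g^{-1}\circ f)|_{\m S^1}$, I let $\lambda^{\eta}$ be the Beltrami coefficient on $\Chat$ equal to the transport $f_*m$ of $m$ by the conformal map $f$ on $\Omega$ (concretely $\lambda^\eta(\zeta)=m(f^{-1}\zeta)$ times a unimodular factor built from $f'$ at $f^{-1}\zeta$) and equal to $0$ on $\Omega^*$, and let $\phi^{\eta}$ solve $\mu_{\phi^{\eta}}=\lambda^{\eta}$ with a normalization fixing three points away from all curves under consideration.

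A chain-rule computation for Beltrami coefficients shows that $f':=\phi^{\eta}\circ f\circ H$ is conformal on $\m D$ (the transport was chosen precisely so that $\phi^{\eta}\circ f$ has Beltrami coefficient $\mu_{H^{-1}}$, which $H$ then cancels) and $g':=\phi^{\eta}\circ g$ is conformal on $\m D^*$. Since $\phi^{\eta}$ is quasiconformal, $\eta':=\phi^{\eta}(\eta)$ is again conformally removable by the quasiconformal invariance of conformal removability (Section~\ref{section:removability}); its complementary Jordan domains $\phi^{\eta}(\Omega)$ and $\phi^{\eta}(\Omega^*)$ are uniformized by $f'$ and $g'$; and $(g')^{-1}\circ f'|_{\m S^1}=g^{-1}\circ f\circ H|_{\m S^1}=\psi\circ h$. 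Hence $\psi\circ h\in\CR(\m S^1)$, which settles well-definedness. Post-composition $h\circ\psi$ is obtained identically after interchanging the two disks: one extends $h^{-1}$ quasiconformally to $\overline{\m D^*}$ and straightens the resulting coefficient, now supported on $\Omega^*$, taking $f'=\phi^\eta\circ f$ and $g'=\phi^\eta\circ g\circ\tilde H$.

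For continuity, suppose $\eta_n\to\eta$ in $\CR(\m S^1)$, i.e.\ $f_n\to f$ and $g_n\to g$ locally uniformly, hence $\Omega_n\to\Omega$ and $\Omega_n^*\to\Omega^*$ in the Carathéodory kernel sense. Although $\lambda^{\eta_n}$ is supported on the moving domain $\Omega_n$, it is the transport of the \emph{same} fixed coefficient $m$ by the conformal map $f_n$. Using that $f_n^{-1}\to f^{-1}$ and $f_n'\to f'$ locally uniformly on $\Omega$, that $f^{-1}\circ f_n\to\mathrm{id}$ locally uniformly on $\m D$, and that $v\mapsto v\circ w$ is continuous on $L^p_{\mathrm{loc}}$ under locally uniform convergence $w\to\mathrm{id}$ of conformal maps (proved by approximating $v$ by continuous functions and changing variables), one obtains $\lambda^{\eta_n}\to\lambda^{\eta}$ in $L^p_{\mathrm{loc}}(\Omega)$, while on $\Omega^*$ both coefficients are eventually $0$. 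Since $\eta$ is conformally removable it has zero area, so $\Chat$ can be exhausted up to an arbitrarily small-measure neighborhood of $\eta$ by compact subsets of $\Omega\cup\Omega^*$; with the uniform bound $\norm{\lambda^{\eta_n}}_\infty\le\norm{m}_\infty$ this upgrades the convergence to $\lambda^{\eta_n}\to\lambda^{\eta}$ in $L^p(\Chat)$. By the continuous dependence of the normalized Beltrami solution on its coefficient (Ahlfors--Bers), $\phi^{\eta_n}\to\phi^{\eta}$ uniformly on $\Chat$, whence $f_n'=\phi^{\eta_n}\circ f_n\circ H\to f'$ and $g_n'=\phi^{\eta_n}\circ g_n\to g'$ locally uniformly, which is the asserted continuity; the argument for $\psi\mapsto h\circ\psi$ and for the marked-point variant (Proposition~\ref{continuous group action special point}), where one only adapts the normalization of $\phi^{\eta}$ to track the marked point, is the same.

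The main obstacle is the convergence $\lambda^{\eta_n}\to\lambda^{\eta}$: because the intermediate Riemann maps move, the Beltrami coefficients live on moving domains and need not converge pointwise, so one must combine the structural fact that they are all transports of a single fixed coefficient with the zero-area property of conformally removable curves to get honest $L^p(\Chat)$-convergence before feeding it into the continuity of the Beltrami solution operator. The bookkeeping showing that $\phi^{\eta}\circ f\circ H$ is conformal and that the new welding is $\psi\circ h$ is routine but must be organized so that the nonzero Beltrami coefficient sits on the correct complementary component.
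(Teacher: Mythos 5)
Your proposal is correct and follows essentially the same route as the paper: both realize composition by a quasisymmetric map as a quasiconformal deformation whose Beltrami coefficient is the transport of a fixed coefficient into one complementary component of the curve (cf.\ \eqref{eq:group action Beltrami}), deduce removability of the image curve by composing homeomorphisms, and obtain continuity from convergence of these coefficients together with continuous dependence of normalized solutions of the Beltrami equation; your explicit use of the zero-area property of removable curves to justify convergence on the moving supports is, if anything, slightly more careful than the paper's appeal to almost-everywhere convergence. The only point where you prove less than Propositions~\ref{continuous group action} and \ref{continuous group action special point} is that you keep the quasisymmetric map fixed, whereas those propositions assert continuity of the two-variable maps $(\psi,\varphi)\mapsto\psi\circ\varphi$ and $(\psi,\varphi)\mapsto\varphi^{-1}\circ\psi$; this is repaired exactly as in the paper by choosing quasiconformal extensions whose Beltrami coefficients converge in $L^\infty$, which your $L^p(\Chat)$ argument absorbs without change.
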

    
   Our main result concerns random weldings corresponding SLE$_\k$ loops when $\kappa \in (0,4)$. We focus on a specific realization of the one-to-one correspondence between weldings and loops, described in Definition~\ref{df:SLE welding}. At this time, we note that the SLE$_\k$ loop measure restricted to loops that separate $0$ from $\infty$ induces a probability measure on the stabilizers of $1$ in $\CR(\m S^1)$, which we denote as $\SLE_\kappa^{\mathrm{weld}}$.
\begin{thm}\label{thm:main}
    For $\k\in (0,4)$, sample $\psi_\k$ from $\SLE_\kappa^{\mathrm{weld}}$. We have the following for any fixed $\varphi \in \WP(\m S^1)$ with $\varphi(1)=1$.
\begin{itemize}
    \item The law of $\psi_\k \circ \varphi $ is mutually absolutely continuous with respect to $\SLE_\kappa^{\mathrm{weld}}$.
    \item The law of $\varphi^{-1} \circ \psi_\k $ is mutually absolutely continuous with respect to $\SLE_\kappa^{\mathrm{weld}}$ if the log-ratio $\log\abs{(\varphi(\cdot)-\varphi(1))/(\cdot-1)} $ belongs to $H^{1/2}(\m S^1)$.
\end{itemize}
\end{thm}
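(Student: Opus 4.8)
The plan is to transport the statement, through the realization of $\SLE_\kappa^{\mathrm{weld}}$ as the conformal welding of two independent Liouville quantum gravity (LQG) disks (Definition~\ref{df:SLE welding}), into a Cameron--Martin-type statement for the boundary Gaussian multiplicative chaos (GMC) measures carried by those disks, and then to prove the latter by combining the Cameron--Martin and Feldman--H\'ajek theorems with the characterization of $\WP(\m S^1)$ as a group of Hilbert--Schmidt perturbations of the identity on $H^{1/2}(\m S^1)$. Concretely, write a sample as $\psi_\kappa = F_{\nu_2}^{-1}\circ F_{\nu_1}$, where $\nu_1 = \mu_{X_1}$ and $\nu_2 = \mu_{X_2}$ are the independent boundary GMC measures (of parameter $\gamma/2$, $\gamma = \sqrt\kappa$) of the inner and outer LQG disks, $X_1,X_2$ are the associated log-correlated boundary fields on $\m S^1$, and $F_\nu(z) := \nu(\text{arc from }1\text{ to }z)$ is the quantum-length coordinate. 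A short computation with these coordinates shows that $\psi_\kappa\circ\varphi$ has the law of the welding of $\bigl((\varphi^{-1})_*\nu_1,\ \nu_2\bigr)$ and that $\varphi^{-1}\circ\psi_\kappa$ has the law of the welding of $\bigl(\nu_1,\ (\varphi^{-1})_*\nu_2\bigr)$ --- both again conformally removable weldings fixing $1$ by Proposition~\ref{Int thm: main thm1 removability}. Since $(X_1,X_2)\mapsto\psi_\kappa$ is measurable and $X_1,X_2$ are independent, it is enough to show that the law of $(\varphi^{-1})_*\nu_i$ is mutually absolutely continuous with respect to that of $\nu_i$, for $i=1$ in the first bullet and $i=2$ in the second. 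By the LQG change-of-coordinates formula, $(\varphi^{-1})_*\mu_X = \mu_{X\circ\varphi + Q\log|\varphi'|}$ with $Q = \gamma/2 + 2/\gamma$ (established for $\varphi\in C^1$ and extended to $\varphi\in\WP(\m S^1)$ by approximation, using $\log|\varphi'|\in H^{1/2}(\m S^1)$), so the task becomes: the pushforward of the law of $X_i$ under the affine map $\Theta_\varphi\colon X\mapsto X\circ\varphi + Q\log|\varphi'|$ is equivalent to the law of $X_i$.

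\emph{Cameron--Martin plus Feldman--H\'ajek.} Factor $\Theta_\varphi$ as the composition operator $C_\varphi\colon X\mapsto X\circ\varphi$ followed by translation by $Q\log|\varphi'|$. The translation is dealt with by the classical Cameron--Martin theorem, since $Q\log|\varphi'|$ lies in the Cameron--Martin space $H^{1/2}(\m S^1)$ of $X_i$ (the constant mode of the quantum-disk field contributes only harmless finite-dimensional corrections). For the linear part, realize $C_\varphi$ as a bounded invertible operator on $H^{1/2}(\m S^1)$ --- a quasisymmetric map preserves $H^{1/2}$ with an equivalent norm --- and apply the Feldman--H\'ajek dichotomy: the pushforward of the Gaussian law of $X_i$ under $C_\varphi$ is equivalent to the law of $X_i$ precisely when $C_\varphi^*C_\varphi - I$ is Hilbert--Schmidt on $H^{1/2}(\m S^1)$, the adjoint being taken for the $H^{1/2}$ inner product. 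Now $C_\varphi$ preserves the symplectic form $\omega(f,g) = \frac{1}{2\pi}\int_{\m S^1} f\,dg$, hence $C_\varphi^* J C_\varphi = J$ for the complex structure $J$ (the Hilbert transform), which gives the identity $C_\varphi^*C_\varphi - I = -J\,C_\varphi^{-1}[J,C_\varphi]$; thus $C_\varphi^*C_\varphi - I$ is Hilbert--Schmidt if and only if $[J,C_\varphi]$ is, and this last condition is exactly the defining property of the Weil--Petersson class (Nag--Sullivan; Takhtajan--Teo \cite{TT06}). Combining this with the translation step and the GMC change of coordinates yields the first bullet, together with an explicit Radon--Nikodym derivative expressed through a regularized Fredholm determinant and a Gaussian density; its normalization is expected to be governed by the universal Liouville action of $\varphi$, in line with \cite{W2,CW24}.

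\emph{The outer disk and the extra hypothesis.} For the second bullet the same pipeline applies with $i=2$, but the realization of $\SLE_\kappa^{\mathrm{weld}}$ is not symmetric in the two disks: the outer uniformization is normalized using the marked point $1$ (and $\infty$), so the outer field $X_2$ carries a boundary insertion at $1$, that is, a deterministic singular part proportional to $\log|\cdot-1|$. Pulling this part back by $\varphi$, which fixes $1$, turns $\log|\cdot-1|$ into $\log|\varphi(\cdot)-1|$; consequently the deterministic shift that the Cameron--Martin step must absorb is no longer $Q\log|\varphi'|$ but $Q\log|\varphi'|$ plus a multiple of $\log\bigl|(\varphi(\cdot)-\varphi(1))/(\cdot-1)\bigr|$. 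Since $\log|\varphi'|\in H^{1/2}(\m S^1)$ automatically, the argument goes through exactly when $\log\bigl|(\varphi(\cdot)-\varphi(1))/(\cdot-1)\bigr|\in H^{1/2}(\m S^1)$ --- precisely the hypothesis in the statement.

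\emph{Main obstacle.} The crux is the step combining Feldman--H\'ajek with the Weil--Petersson structure: one has to set up the white-noise framework so that the distribution-valued field $X_i$ really is the image of Gaussian white noise under a bounded operator to which the Feldman--H\'ajek criterion applies, and then to match the Hilbert--Schmidt condition with membership in $\WP(\m S^1)$ robustly enough to cover \emph{all} Weil--Petersson $\varphi$ --- not only smooth ones --- and to survive the GMC change of coordinates of the first step. Tracking how the quantum-disk constant mode and the marked-point insertions interact with the composition action, which is the origin of the asymmetry between the two bullets, is the remaining delicate point.
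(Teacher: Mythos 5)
Your proposal is correct and follows essentially the same route as the paper: realize $\SLE_\kappa^{\mathrm{weld}}$ as the welding of two independent GMC boundary measures (the paper's Lemma~\ref{lem:welding-homeo}), reduce pre-/post-composition to the pullback of one of the two measures, apply the coordinate change rule $\varphi^*\mc M_h = \mc M_{h\circ\varphi+Q\log|\varphi'|}$ extended to $\WP(\m S^1)$ by diffeomorphism approximation (Proposition~\ref{prop:quasi-invariance of weighted Liouville measure}), and conclude via Cameron--Martin/Feldman--H\'ajek together with the Hilbert--Schmidt characterization of $\WP(\m S^1)$ (Theorem~\ref{thm:main_1}, Lemma~\ref{lem properties of Pi_var}), with the asymmetry between the two bullets traced to the $-\gamma\log|\cdot-1|$ insertion on the outer disk exactly as in Corollary~\ref{cor: one marked boundary}. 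Your phrasing of the Hilbert--Schmidt criterion via the commutator $[J,C_\varphi]$ is an equivalent reformulation of the paper's condition on the off-diagonal block $N$ of $\Pi(\varphi)$, so the difference is only cosmetic.
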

We expect that if $\varphi$ is a quasisymmetric circle homeomorphism which is not in $\WP(\m S^1)$, then neither $\psi_\k \circ \varphi$ nor $\varphi^{-1} \circ \psi_\k$ has a law equivalent to $\SLE_\kappa^{\mathrm{weld}}$.

As far as the authors are aware, before this work, only analytic deformations of SLE loops (hence, equivalently, of their welding homeomorphisms) have been studied, within the framework of conformal restriction in particular. This is the first time a non-smooth deformation of SLE is considered. 
As communicated to us by the authors, Baverez and Jego \cite{BJ25} have an independent work studying the SLE welding measure under composition by an analytic circle diffeomorphism, and they compute the Radon--Nikodym derivatives under such deformation using an approach different from but complementary to ours.
We will comment on related literature in Section~\ref{sec:comments}.

\begin{remark}
    For each $\varphi \in \WP(\m S^1)$, the circle homeomorphism $\hat \varphi(\cdot):=\varphi(z_0 \cdot)/\varphi(z_0)$ obtained by conjugating $\varphi$ with the rotation $z\mapsto z_0z$ satisfies $\log\abs{(\hat\varphi(\cdot)-\hat\varphi(1))/(\cdot-1)} \in H^{1/2}(\m S^1)$ for a.e.\ $z_0 \in \m S^1$, as shown in Lemma~\ref{lem log difference}. 
    Instead of normalizing weldings corresponding to $\SLE_\k$ loops by considering those that stabilize $1\in \m S^1$, if we consider the post-composition of $\psi_k$ by a uniform rotation of $\m S^1$, then we obtain a version of Theorem \ref{thm:main} that is symmetric for pre- and post-compositions by elements of $\WP(\m S^1)$: see Corollary~\ref{thm:quasi-invariance of the uniform SLE welding}.
   \end{remark}
\subsection{Key ingredients of the proof}
Our strategy for the proof of Theorem \ref{thm:main} is as follows. We first show that the log-correlated Gaussian field (LGF) on the unit circle is quasi-invariant under pullbacks by Weil--Petersson homeomorphisms. Then, we ``lift'' this characterization to that of the Gaussian multiplicative chaos (GMC). Finally, we obtain our main theorem by identifying the law of SLE welding in terms of compositions of two homomorphisms defined using GMC measures. Here, we explain this proof outline in further detail.

Given an integrable function $h$ on $\m S^1$, we define $\pi_0 h$ to be the projection to its ``mean-zero part.'' That is, $\pi_0 h = h - c_0$ where $c_0$ is the average of $h$ on $\m S^1$.
The LGF on the unit circle is defined as the formal sum
\begin{equation}\label{Int eq:LGF}
    h(\cdot)={\sqrt 2}\sum_{n \geq 1} \xi_n h_n(\cdot), 
\end{equation}
where $\xi_n$ is a sequence of i.i.d.\ standard Gaussian random variables and $h_n$ is an orthonormal basis of $\mc H_0:=\pi_0 H^{1/2}(\m S^1)$. Here, we write $\sqrt{2}$ to demonstrate that the covariance of $h(x)$ and $h(y)$ is $-2\log\abs{x-y}$. 

Let $\varphi \in \Homeo_+(\m S^1)$ be an orientation preserving homeomorphism. Then, $\varphi$ induces a pullback operator $ \Pi(\varphi)$ given by  
\begin{align}\label{Int Eq df of pullback operator}
    \Pi(\varphi)(f):=\pi_0(f\circ \varphi) \quad\text{for}~f \in \mc H_0.
\end{align}
The operator was introduced in \cite{NS95} and \cite{Pa97}; the former showed that $\varphi \in \QS(\m S^1)$ if and only if $\Pi(\varphi)$ is a bounded operator from $\mc H_0$ onto $\mc H_0$. We extend $\Pi(\varphi)$ to act on the LGF $h$ using the expansion \eqref{Int eq:LGF} and by linearity.

\begin{thm}\label{Int prop:quasi-invariance of LGF}
    Let $h$ be an LGF on the unit circle and suppose that $\varphi \in \QS(\m S^1)$. Then, the law of $\Pi(\varphi)(h)$ is mutually absolutely continuous with LGF if and only if $\varphi \in \WP(\m S^1)$. Otherwise, they are mutually singular.
\end{thm}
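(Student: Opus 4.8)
The plan is to reduce the quasi-invariance question to a statement about the Hilbert--Schmidt class. The Gaussian field $h$ is the standard Gaussian on the Cameron--Martin space $\mc H_0 = \pi_0 H^{1/2}(\m S^1)$, so its law is determined by this Hilbert space structure. For a bounded invertible operator $T$ on a Hilbert space, the pushforward of a centered nondegenerate Gaussian $\mu$ under $T$ is mutually absolutely continuous with $\mu$ if and only if $T^* T - I$ (equivalently $T T^* - I$) is Hilbert--Schmidt, and is mutually singular with $\mu$ otherwise; this is the Feldman--H\'ajek dichotomy in the form appropriate to linear transformations. Thus the first step is to invoke (or briefly recall) this dichotomy, reducing everything to: \emph{$\Pi(\varphi)^*\Pi(\varphi) - I$ is Hilbert--Schmidt on $\mc H_0$ if and only if $\varphi \in \WP(\m S^1)$.} Since $\varphi \in \QS(\m S^1)$, Nag--Sullivan guarantees $\Pi(\varphi)$ is bounded and invertible (with inverse $\Pi(\varphi^{-1})$ up to the mean-zero projection), so the dichotomy applies cleanly.

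The second step is the characterization of $\WP(\m S^1)$ in terms of $\Pi(\varphi)$. Here I would appeal to the known operator-theoretic description of the Weil--Petersson class: by the work on the period map / Takhtajan--Teo theory, $\varphi \in \WP(\m S^1)$ if and only if the ``off-diagonal'' part of the induced operator on $H^{1/2}$ (the analogue of the Grunsky operator, or the anti-holomorphic part of $\Pi(\varphi)$ acting on the Hardy space decomposition $H^{1/2}_\m C = H^{1/2}_+ \oplus H^{1/2}_-$) is Hilbert--Schmidt rather than merely bounded. One then needs to translate between this and the condition that $\Pi(\varphi)^*\Pi(\varphi) - I \in \HS(\mc H_0)$. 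The cleanest route: complexify, write $\Pi(\varphi)$ in block form with respect to $H^{1/2}_+ \oplus H^{1/2}_-$, note that boundedness of $\Pi(\varphi)$ plus the Weil--Petersson hypothesis makes the off-diagonal blocks Hilbert--Schmidt, and show that $\Pi(\varphi)$ is then a Hilbert--Schmidt perturbation of a (block-diagonal) orthogonal-type operator; conversely a Hilbert--Schmidt perturbation of such an operator forces the off-diagonal blocks into $\HS$. Combined with the real structure (the operator commutes with complex conjugation, $\overline{f(\bar\cdot)}$) this yields $\Pi(\varphi)^*\Pi(\varphi) - I \in \HS(\mc H_0)$ exactly when the Grunsky-type operator is Hilbert--Schmidt, which is one of the standard equivalent definitions of $\WP(\m S^1)$ (equivalently, $\log|\varphi'| \in H^{1/2}$, via Shen's theorem already cited).

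The third step is bookkeeping about the mean-zero normalization: $\Pi(\varphi)$ as defined includes the projection $\pi_0$, so one must check that passing from $f \circ \varphi$ to $\pi_0(f \circ \varphi)$ only changes the relevant operators by rank-one (hence $\HS$, hence harmless) corrections, and that the constant modes do not interfere with the dichotomy. This is routine. Finally, when $\varphi \notin \WP(\m S^1)$, the same block decomposition shows $\Pi(\varphi)^*\Pi(\varphi)-I$ fails to be Hilbert--Schmidt, and the Feldman--H\'ajek dichotomy gives mutual singularity; one should also confirm $\Pi(\varphi)(h)$ is still a.s.\ a well-defined element of the relevant distribution space (using only boundedness of $\Pi(\varphi)$ from Nag--Sullivan) so that the singular statement is meaningful.

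I expect the main obstacle to be the second step: pinning down the exact operator-theoretic dictionary between ``$\Pi(\varphi)^*\Pi(\varphi) - I$ is Hilbert--Schmidt'' and the established characterizations of the Weil--Petersson class. The literature (Nag--Sullivan, Takhtajan--Teo, Hu--Shen, Shen) phrases the Weil--Petersson condition variously — Hilbert--Schmidt Grunsky operator, $\log|\varphi'| \in H^{1/2}$, finite universal Liouville action — and one must carefully choose the most convenient formulation and verify the equivalence with the $\Pi(\varphi)$-based condition, keeping track of adjoints, the real versus complex Hilbert spaces, and the mean-zero projection. The Gaussian-measure input (step one) is classical, and steps three and the singular direction are then essentially immediate.
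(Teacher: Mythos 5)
Your proposal is correct and follows essentially the same route as the paper: the Feldman--H\'ajek dichotomy reduces the statement to whether $\Pi(\varphi)\Pi(\varphi)^*-I$ is Hilbert--Schmidt, and the paper's Lemma~\ref{lem properties of Pi_var} carries out exactly the block-decomposition argument you sketch, using the symplectic relations for $\Pi(\varphi)$ to reduce to the off-diagonal block $N$ and citing Hu--Shen for the equivalence $N\in\HS$ iff $\varphi\in\WP(\m S^1)$.
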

The proof of Theorem~\ref{Int prop:quasi-invariance of LGF} is based on the Feldman--H\'ajek theorem, which gives a necessary and sufficient condition for classifying two infinite dimensional Gaussian measures on a locally convex space as either mutually absolutely continuous or mutually singular. In our context,  this depends on whether $\Pi(\varphi)\Pi(\varphi)^*-I$ is Hilbert--Schmidt. We show that this condition holds if and only if $\varphi \in \WP(\m S^1)$ (see Lemma~\ref{lem properties of Pi_var}) based on previous results \cite{Sch81,TT06,HS12} which identified certain operators associated with $\Pi(\varphi)$ to be Hilbert--Schmidt.
    
    Our next step is to ``lift'' Theorem \ref{Int prop:quasi-invariance of LGF} to that for the Gaussian multiplicative chaos (GMC) measure $\mc M_h^\g$ defined from LGF $h$ heuristically as $\exp(\frac{\g}{2}h)\,\dd\theta$ for $\g\in (0,2]$. See Section \ref{section: GMC} for the precise definition and its basic properties. We define the normalized GMC measure as $\widehat{\mc M}_h:=\mc M_h/\mc M_h(\m S^1)$ such that the total measure on the circle is equal to $1$.
\begin{prop}\label{Int prop:quasi-invariance of weighted Liouville measure}
    Let $\mc M_h = \mc M_h^{\gamma}$ be the GMC measure corresponding to an LGF $h$ for $\gamma \in (0,2]$. If $\varphi \in \WP(\m S^1)$, then the following coordinate change rule holds for its pull-back: almost surely, 
    \begin{equation}\label{Int eq:coordinate-change}
        \varphi^* \mc M_h  = \mc M_{h\circ \varphi+Q\log\abs{\varphi'}} 
    \end{equation}
where $Q = \frac{\gamma}{2} + \frac{2}{\gamma}$. Furthermore, for $\varphi \in \Homeo_+(\m S^1)$, the law of the normalized pull-back measure $\varphi^* \widehat{\mc M}_h$ is absolutely continuous with respect to the law of $\widehat{\mc M}_h$ if and only if $\varphi \in \WP(\m S^1)$. 
\end{prop}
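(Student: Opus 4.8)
\emph{Proof plan.} I would first establish the change-of-coordinates identity \eqref{Int eq:coordinate-change} by approximation, then obtain the ``if'' half of the second assertion by chaining absolute-continuity transfers, and finally handle the ``only if'' half by reconstructing the underlying field from the GMC. For \eqref{Int eq:coordinate-change}: when $\varphi$ is a smooth circle diffeomorphism, so that $\log\abs{\varphi'}$ is smooth, this is the usual conformal covariance of GMC, following from the circle-average regularization of $\mc M_h$ --- pulling the cutoff back by $\varphi$ rescales the cutoff at $x$ by $\abs{\varphi'(x)}$, shifting the Wick renormalizing constant and producing exactly the discrepancy between the Jacobian $\abs{\varphi'}$ and the weight $\abs{\varphi'}^{\gamma Q/2}$. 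For general $\varphi\in\WP(\m S^1)$ I would take smooth diffeomorphisms $\varphi_n\to\varphi$ in $\WP(\m S^1)$ (which are dense): on the left, $\varphi_n^{-1}\to\varphi^{-1}$ uniformly, so $\varphi_n^*\mc M_h\to\varphi^*\mc M_h$ weakly a.s.\ by dominated convergence against continuous test functions; on the right, $\log\abs{\varphi_n'}\to\log\abs{\varphi'}$ in $H^{1/2}(\m S^1)$ and $\Pi(\varphi_n)\Pi(\varphi_n)^*\to\Pi(\varphi)\Pi(\varphi)^*$, so $\mc M_{h\circ\varphi_n+Q\log\abs{\varphi_n'}}\to\mc M_{h\circ\varphi+Q\log\abs{\varphi'}}$. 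The point requiring care is the continuity of $\mc M_\bullet$ jointly in the mean-zero Gaussian part $\Pi(\varphi_n)(h)$ (via convergence of covariances plus a Kahane-type comparison) and the deterministic Cameron--Martin part $Q\log\abs{\varphi_n'}$ (an $L^1$-continuity statement for a density change), of the type established in Section~\ref{section: GMC}.

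For the ``if'' direction, let $\varphi\in\WP(\m S^1)$. By \eqref{Int eq:coordinate-change}, and since normalization absorbs the random constant relating $h\circ\varphi$ to $\Pi(\varphi)(h)=\pi_0(h\circ\varphi)$, we get $\varphi^*\widehat{\mc M}_h=\widehat{\mc M}_{\Pi(\varphi)(h)+Q\log\abs{\varphi'}}$ a.s. Now chain two transfers through the (a.s.\ defined) measurable map $X\mapsto\widehat{\mc M}_X$. Since $\varphi$ and $\varphi^{-1}$ are quasisymmetric, $\Pi(\varphi)$ is boundedly invertible on $\mc H_0$ (Lemma~\ref{lem properties of Pi_var}), so the Gaussian field $\Pi(\varphi)(h)$ has Cameron--Martin space $H^{1/2}(\m S^1)$, which contains the deterministic shift $Q\log\abs{\varphi'}$; by the Cameron--Martin theorem the laws of $\Pi(\varphi)(h)+Q\log\abs{\varphi'}$ and of $\Pi(\varphi)(h)$ are mutually absolutely continuous, hence so are those of $\widehat{\mc M}_{\Pi(\varphi)(h)+Q\log\abs{\varphi'}}$ and $\widehat{\mc M}_{\Pi(\varphi)(h)}$. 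By Theorem~\ref{Int prop:quasi-invariance of LGF} the law of $\Pi(\varphi)(h)$ is mutually absolutely continuous with LGF, hence so are those of $\widehat{\mc M}_{\Pi(\varphi)(h)}$ and $\widehat{\mc M}_h$. Composing gives the ``if'' direction, in fact with mutual absolute continuity.

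The ``only if'' direction is where I expect the real difficulty, since \eqref{Int eq:coordinate-change} is unavailable --- indeed false --- for $\varphi\notin\WP(\m S^1)$ (e.g.\ for a singular quasisymmetric $\varphi$). I would argue by contraposition, using the known fact that for subcritical $\gamma$ the GMC determines its field: there is a measurable reconstruction map $\mc R$, defined $\widehat{\mc M}_h$-a.s., with $\mc R(\widehat{\mc M}_h)=h$ a.s.\ (read off from the masses of small arcs after recentering --- Shamov's characterization; the critical value $\gamma=2$ would need the analogue for critical GMC). Assuming the law of $\varphi^*\widehat{\mc M}_h$ is absolutely continuous with respect to that of $\widehat{\mc M}_h$, the field $Y:=\mc R(\varphi^*\widehat{\mc M}_h)$ is then a.s.\ defined, satisfies $\widehat{\mc M}_Y=\varphi^*\widehat{\mc M}_h$, and has law absolutely continuous with respect to LGF. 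The crux is to show that, under this hypothesis, $\varphi$ is necessarily absolutely continuous and $Y=\Pi(\varphi)(h)+Q\log\abs{\varphi'}$ up to a constant --- essentially the change of coordinates for quasisymmetric $\varphi$ at the level of the reconstruction. Granting this, $Y$ is a Gaussian field with covariance $\Pi(\varphi)\Pi(\varphi)^*$ and mean the $H^{1/2}$-shift $Q\log\abs{\varphi'}$, so the Feldman--H\'ajek dichotomy applied to the fact that $Y$ has law $\ll$ LGF forces $\Pi(\varphi)\Pi(\varphi)^*-I$ to be Hilbert--Schmidt, i.e.\ $\varphi\in\WP(\m S^1)$ by Lemma~\ref{lem properties of Pi_var}. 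A non-quasisymmetric $\varphi$ is subsumed, as then $\varphi^*\widehat{\mc M}_h$ is the GMC of no log-correlated field and the reconstruction step already yields a contradiction. The principal obstacles are thus the rigorous identification of $Y$ for merely quasisymmetric $\varphi$ --- where $\varphi$ carries round arcs to non-round ones and $\log\abs{\varphi'}$ need not exist pointwise --- and the treatment of the critical parameter $\gamma=2$.
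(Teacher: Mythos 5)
Your treatment of the coordinate-change formula and of the ``if'' half of the second assertion matches the paper's proof in all essentials: prove \eqref{Int eq:coordinate-change} for smooth diffeomorphisms via circle-average renormalization, extend by approximating $\varphi$ in the Weil--Petersson topology using a continuity statement for $X\mapsto\mc M_X$ in both the Gaussian part and the $\mc H_0$-shift (the paper's Lemmas~\ref{lem:diff-coord-change} and \ref{lem:gmc-convergence}), and then transfer absolute continuity through the a.s.-defined map $X\mapsto\widehat{\mc M}_X$ using Feldman--H\'ajek (the paper's Theorem~\ref{thm:main_1} does your two Cameron--Martin/covariance steps in one stroke). A minor imprecision: the Cameron--Martin space in question is $\mc H_0$, not $H^{1/2}(\m S^1)$, but normalization absorbs the constant so this is harmless.

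The genuine gap is in the ``only if'' direction, exactly at the step you flag and then ask to be granted. After reconstructing $Y$ with $\widehat{\mc M}_Y=\varphi^*\widehat{\mc M}_h$, you propose to identify $Y=\Pi(\varphi)(h)+Q\log\abs{\varphi'}$ up to a constant. For a general homeomorphism --- or even a general quasisymmetric $\varphi$, which can be completely singular --- $\log\abs{\varphi'}$ need not exist, and the coordinate-change identity is precisely what is only proved (and only meaningful) on $\WP(\m S^1)$; invoking it here is circular. Your fallback for non-quasisymmetric $\varphi$ (``the reconstruction step already yields a contradiction'') is also unjustified: under the absolute-continuity hypothesis, $\varphi^*\widehat{\mc M}_h$ \emph{is} a.s.\ equal to $\widehat{\mc M}_{\tilde h}$ for some random $\tilde h$ whose law is absolutely continuous with respect to LGF, so no contradiction arises at the reconstruction stage. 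The paper circumvents all of this by never identifying the mean of the limit field: it takes smooth $\varphi_n\to\varphi$ merely uniformly (by convolution, with no Weil--Petersson control), notes $\varphi_n^*\widehat{\mc M}_h\to\varphi^*\widehat{\mc M}_h$ a.s., hence $\Pi(\varphi_n)h+\pi_0(Q\log|\varphi_n'|)\to\tilde h$ in law, so the covariances $\Pi(\varphi_n)\Pi(\varphi_n)^*$ converge weakly to the covariance operator of $\tilde h$, which is $I$ plus Hilbert--Schmidt by Feldman--H\'ajek. The uniform boundedness principle then bounds $\sup_n\|\Pi(\varphi_n)\|$, Fatou's lemma shows $\Pi(\varphi)$ is bounded (hence $\varphi\in\QS(\m S^1)$ by Nag--Sullivan), and entrywise convergence of the matrices \eqref{eq:op-entry-m}--\eqref{eq:op-entry-n} identifies the limit covariance as $\Pi(\varphi)\Pi(\varphi)^*$, giving $\varphi\in\WP(\m S^1)$ by Lemma~\ref{lem properties of Pi_var}. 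You would need to replace your ``crux'' with an argument of this kind, which works at the level of covariance operators of the approximating fields rather than attempting to identify the limit field itself in terms of $\varphi$.
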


The coordinate change formula \eqref{Int eq:coordinate-change} was proven to hold for 2D log-correlated field in \cite{DS11_LQG_KPZ}. This proof can be applied straightforwardly for the 1D LGF when $\varphi$ is a diffeomorphism of $\m S^1$ (Lemma~\ref{lem:diff-coord-change}). We prove the first part of Proposition \ref{Int prop:quasi-invariance of weighted Liouville measure} by approximating an arbitrary $\varphi \in \WP(\m S^1)$ with diffeomorphisms and showing that the corresponding GMC measures converge (Lemma~\ref{lem:gmc-convergence}). To show the second part of Proposition \ref{Int prop:quasi-invariance of weighted Liouville measure}, we use that the GMC measure almost surely determines the LGF as proved in \cite{bss-equiv-gmc} (for the $\g=2$ case, in \cite{vihko-equiv-gmc}).

The final step of our proof of Theorem \ref{thm:main} is based on the theory of conformal welding of Liouville quantum gravity (LQG) surfaces. In particular, Ang, Holden, and Sun \cite{AHS23} proved that the conformal welding of two independent LQG disks gives the SLE$_\kappa$ loop measure. We use the following translation of this result in terms of the law of the welding homeomorphism.

 If $h$ is an LGF or a variant thereof, let
\begin{equation*}
    \phi_h^\gamma(z):= \exp(2\pi \ii \cdot \widehat {\mc M}_h^\gamma([1,z])),
\end{equation*}
where $[1,z] \subset \m S^1$ denotes the arc running counterclockwise from 1 to $z$.
\begin{lem}\label{Int lem:welding-homeo}
    Let $\gamma \in (0,2)$ and $\kappa = \gamma^2 \in (0,4)$. If $h_1$ and $h_2$ are independent LGFs on the unit circle, then $\SLE_\kappa^{\mathrm{weld}}$ is mutually absolutely continuous with respect to the law of $(\phi_{h_2 -\gamma\log |\cdot - 1|}^\g)^{-1} \circ \phi_{h_1}^\g$.
\end{lem}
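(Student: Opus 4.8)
The plan is to translate the main theorem of Ang--Holden--Sun \cite{AHS23} --- that the conformal welding of two independent unit-boundary-length quantum disks produces the $\SLE_\kappa$ loop measure --- from a statement about random Jordan curves into a statement about the associated welding homeomorphisms, keeping track of the precise normalizations. First I would recall that a (unit-boundary-length, say with two marked boundary points) quantum disk has a boundary measure which, after uniformizing the disk to $\m D$ with the marked point sent to $1$, can be written in terms of a field that is an LGF plus the deterministic logarithmic singularity coming from the $\gamma$-insertion at the marked boundary point; this is exactly why the two sides of the welding are encoded by $\phi_{h_1}^\gamma$ and $\phi_{h_2 - \gamma \log|\cdot - 1|}^\gamma$, the $-\gamma\log|\cdot-1|$ accounting for the marked-point insertion on the second disk (the asymmetry reflecting the choice to normalize at $1$). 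The welding homeomorphism is then, by definition, the map that identifies a boundary point of the first disk with the boundary point of the second disk carrying the same quantum length --- i.e., $(\phi_{h_2-\gamma\log|\cdot-1|}^\gamma)^{-1}\circ\phi_{h_1}^\gamma$ --- so that the resulting curve has welding given by this composition.

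The main steps, in order, are: (i) state precisely the version of \cite{AHS23} being used, identifying the quantum disks with the GMC boundary measures $\widehat{\mc M}_{h_1}$ and $\widehat{\mc M}_{h_2-\gamma\log|\cdot-1|}$ via the conformal uniformization sending marked points to $1$, so that the conformal welding along matching quantum boundary length is realized by the homeomorphism $(\phi_{h_2-\gamma\log|\cdot-1|}^\gamma)^{-1}\circ\phi_{h_1}^\gamma$; (ii) invoke the a.s.\ conformal removability of $\SLE_\kappa$ curves for $\kappa\in(0,4)$ so that the welding homeomorphism is well-defined and the map from the curve to its welding is measurable, placing us inside $\CR(\m S^1)$; (iii) match up the normalizations --- both the $\SLE_\kappa$ loop measure restricted to loops separating $0$ from $\infty$ and normalized to stabilize $1$ (the definition of $\SLE_\kappa^{\mathrm{weld}}$, Definition~\ref{df:SLE welding}), and the AHS welding, involve choices of Möbius frame / marked points, and one must check these choices either agree or differ by a mutually absolutely continuous change (e.g.\ a reweighting by a bounded-away-from-zero-and-infinity density, or an extra integration over a marked point, both of which preserve the mutual absolute continuity class); (iv) conclude mutual absolute continuity rather than equality, which is all that is claimed --- the slack absorbing any discrepancy between the $\sigma$-finite loop measure's internal normalization conventions and the probability measure built from two independent LGFs.

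I expect step (iii) --- reconciling the normalizations --- to be the main obstacle, or at least the most delicate bookkeeping. The AHS welding naturally comes with its own marked points (the two disks are doubly-marked, or there is an extra marked point determining the ``starting point'' of the welding), whereas $\SLE_\kappa^{\mathrm{weld}}$ is obtained by restricting the loop measure to loops separating $0$ and $\infty$ and then fixing the welding to stabilize $1\in\m S^1$; translating between ``the loop passes through a prescribed point'' and ``the welding fixes $1$'' requires understanding how the rooting point on the curve corresponds to the fixed point $1$ of the circle map under the two uniformizing maps $f,g$, and how the $\sigma$-finite loop measure disintegrates over this data. Since only mutual absolute continuity (not an explicit Radon--Nikodym derivative) is asserted, we have the freedom to be lossy here: it suffices to exhibit a common reference measure with respect to which both laws have positive, locally finite densities, so I would organize step (iii) around producing such a reference measure (most naturally, the AHS welding itself, possibly after an auxiliary rooting integration) and checking finiteness of the relevant densities using that the loop-measure normalization factors --- conformal radii, partition-function weights --- are a.s.\ strictly positive and finite. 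The genuinely analytic inputs (removability, the GMC boundary length being a.s.\ a homeomorphism onto $\m S^1$) are by now standard and can be cited.
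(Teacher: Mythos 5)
Your overall route is the paper's: invoke the Ang--Holden--Sun conformal welding theorem, pass from curves to welding homeomorphisms via conformal removability of $\SLE_\kappa$, and reconcile the rooting and normalization conventions, which you correctly flag as the delicate step. There is, however, a genuine gap at the single non-routine point of the argument: the identification of the second disk's boundary field as $h_2-\gamma\log|\cdot-1|$ while the first carries no insertion. You attribute the $-\gamma\log|\cdot-1|$ to ``the marked-point insertion on the second disk,'' as if the welding theorem were stated for a pair of disks of which one carries a boundary $\gamma$-insertion and the other does not. That asymmetric statement is not the available input: the result the paper uses (Lemma~\ref{lem:acsw}, from \cite{ang2024sleloopmeasureliouville}) welds two quantum disks each with one \emph{interior} marked point and no boundary marked points, glued \emph{uniformly}, i.e., at independent points sampled from the two quantum boundary length measures; in that setup both boundary fields are equivalent to plain LGFs and the welding homeomorphism is $(\alpha_2\circ\phi_{h_2}^\gamma)^{-1}\circ(\alpha_1\circ\phi_{h_1}^\gamma)$ with independent uniform rotations $\alpha_1,\alpha_2$.

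The mechanism producing the insertion on exactly one side is the re-rooting identity of Lemma~\ref{lem:add-point}: re-rooting a one-interior-marked-point disk at a boundary point sampled from its quantum length measure turns $\mathrm{LF}_{\m D}^{(\gamma,0)}(\ell)^\#$ into $\mathrm{LF}_{\m D}^{(\gamma,0),(\gamma,1)}(\ell)^\#$, whose boundary trace is $h_2-\gamma\log|\cdot-1|$ up to an additive constant, while re-rooting at a Lebesgue-uniform point removes the insertion. In the paper's proof the point of the second circle glued to $1$ is quantum-typical (hence the insertion on $h_2$), whereas the point $1$ of the first circle is made Lebesgue-typical by an extra uniform rotation using the rotation invariance of $\mu_\#^\kappa$ (hence no insertion on $h_1$). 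Without this size-biasing computation, your step (i) asserts the conclusion rather than deriving it, and the ``common reference measure'' strategy of step (iii) cannot absorb the discrepancy: since $\gamma\log|\cdot-1|\notin H^{1/2}(\m S^1)$, the laws of $h_2$ and of $h_2-\gamma\log|\cdot-1|$ (and hence of the corresponding normalized GMC measures, which determine the fields a.s.) are mutually \emph{singular}, so getting the insertion wrong changes the equivalence class of the answer. You would also need the disintegration of the Liouville field over the quantum boundary length $\ell$ (fixing $\ell=1$) to see that its boundary trace is equivalent in law to an LGF at all; this is routine but must be stated.
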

Lemma~\ref{Int lem:welding-homeo} is a short version of Lemma~\ref{lem:welding-homeo}. We will obtain Lemma~\ref{lem:welding-homeo} using conformal welding of quantum disks when each disk has one interior marked point, as established in \cite{ang2024sleloopmeasureliouville} based on the works \cite{Quantum_zipper,AHS23}. Theorem \ref{thm:main} follows from combining Proposition \ref{Int prop:quasi-invariance of weighted Liouville measure} with Lemma \ref{Int lem:welding-homeo}. The conformal welding result for quantum disks in \cite{AHS23} is expected to hold for $\g=2$, in which case Theorem \ref{thm:main} would extend to $\k = 4$.

\subsection{Comments and related literature}\label{sec:comments}

The SLE$_\kappa$ loop measure can be defined for $\kappa \in (0,8)$ \cite{zhan2020sleloop}. When $\kappa \in (4,8)$, SLE$_\k$ is not simple \cite{Rohde_Schramm}. However, it was recently proved in \cite{ang2024sleloopmeasureliouville} that the conformal welding of generalized quantum disks (which have the topology of infinitely many disks concatenated into a tree-like shape) gives the SLE$_\kappa$ loop for $\kappa \in (4,8)$. It would be interesting to consider if there is a family of homeomorphisms on the boundary of a generalized quantum disk that gives an analgous statement to Theorem \ref{thm:main}.

Several recent works have considered conformal deformations of SLE loops. The conformal restriction covariance of chordal SLE was outlined by Lawler, Schramm, and Werner in \cite{LSW_CR_chordal}. Its loop version was postulated by Kontsevich and Suhov \cite{KS07} and proved by Zhan in \cite{zhan2020sleloop}.
The work \cite{SungWang} computed the first variation of the driving function of a Loewner chain under quasiconformal deformations away from the curve, leading to the alternative proof of Loewner energy as a K\"ahler potential on the Weil--Petersson Teichm\"uller space. Based on this result, Gordina, Qian, and Wang \cite{gqw_virasoro_loop} used the SLE$_\k$ loop measure for $\k\in (0,4]$ to construct a natural representation of the Virasoro algebra of central charge $c \leq 1$. An independent work of Baverez and Jego \cite{bj_cft_loop} developed the conformal field theory for the SLE$_\k$ loop measure and proved its characterization as a Malliavin--Kontsevich--Suhov measure for $\k \in (0,4]$. 
On the one hand, these results provide evidence for the idea that the Weil--Petersson Teichm\"uller space should be the Cameron--Martin space of SLE loop measures. On the other hand, deformations considered in these works are limited to those which are analytic in a neighborhood of the Jordan curve. In this work, we consider the full class of Weil--Petersson quasisymmetric homeomorphisms acting on SLE loops for the first time.

Our proof of Theorem \ref{thm:main} can be adapted to show quasi-invariance for other weldings derived from GMC measures that have appeared in the literature. The work \cite{AJKS} showed, using geometric function theory methods without reference to LQG theory, that $\phi_h^\g$ is a welding for $\g\in(0,2)$. The same work also showed that $\phi_{h_2}^{\g_2} \circ (\phi_{h_1}^{\g_1})^{-1}$, where $h_1,h_2$ are independent LGFs and $\g_1,\g_2\in (0,2)$, is a welding. Using a similar approach, the articles \cite{inverse-gmc-welding,CWofGMC} showed that $(\phi_{h_2}^{\g_2})^{-1} \circ \phi_{h_1}^{\g_1}$, which is more alike the random homeomorphism in Lemma \ref{Int lem:welding-homeo}, is a welding for small $\g_1,\g_2>0$. In fact, \ref{Int lem:welding-homeo} shows that the  Jordan curve solving the welding problem for $(\phi_{h_2}^{\g})^{-1} \circ \phi_{h_1}^{\g}$ is locally mutually absolutely continuous with the chordal SLE$_{\gamma^2}$ curve if we remove a neighborhood of the root (the image of $1 \in \m S^1$) of the loop. 
We give the analogous quasi-invariance results for these random weldings in Corollary~\ref{cor quasi-invariance of GMC welding}.

This paper is organized as follows. In Section~\ref{section:QS and WP}, we introduce the groups of circle homeomorphisms that are of interest in this work and describe properties of the associated pull-back operator $\Pi(\varphi)$ on the function space $\mc H_0$. In Section~\ref{section: GMC}, we introduce the LGF and GMC and prove their quasi-invariance under pullbacks by Weil--Petersson homeomorphisms. Section~\ref{section: main result} completes the proof of the quasi-invariance of SLE welding, with the necessary development of the following concepts: conformally removable weldings and the continuity of the composition action on it by the quasisymmetric group, the SLE loop measure and the corresponding SLE welding measure, and the conformal welding of quantum disks.    

\vspace{8pt}

\textbf{Acknowledgements.} The authors wish to thank Yilin Wang for her invaluable insight into the relationship between SLE and the universal Teichm\"uller space. 
We are grateful to Guillaume Baverez and Antoine Jego for sharing their independent manuscript \cite{BJ25}.
We are also grateful to Hong-Bin Chen, Eero Saksman, Fredrik Viklund, and Hao Wu for helpful discussions.

This work was completed in part during J.S.'s visit to the IHES and S.F.'s visit to the IMSI at the University of Chicago, and the authors wish to thank the respective institutions for their hospitality. S.F. is funded by Beijing Natural Science Foundation (JQ20001); the European Union (ERC, RaConTeich, 101116694) and Tsinghua scholarship for overseas graduates studies (2023076). J.S. is partially supported by a fellowship from Kwanjeong Educational Foundation.

\section{Pullback operator associated with the Weil--Petersson class}\label{section:QS and WP}
In this section, we introduce the group of quasisymmetric circle homeomorphisms and the associated pullback operators on a function space of the unit circle and the log-ratio. The first result of this section is Lemma \ref{lem properties of Pi_var}, where we find various correspondences between subgroups of circle homeomorphisms and the properties of pullback operators. The second result is Lemma~\ref{lem log difference}, where we prove that the log-ratio is in the same space as the log-derivative.

\subsection{Quasisymmetric homeomorphisms and the Weil--Petersson class }
Let $\m S^1$ denote the unit circle, which we identify with the boundary of the unit disk $\m D$ in the complex plane. The set of orientation-preserving homeomorphisms of $\m S^1$, which we denote $\Homeo_+(\m S^1)$, has a natural group structure with the composition of functions as the group action.
In this subsection, we introduce various subgroups of it. Some trivial examples are $\Diff_+(\m S^1)$, $\mob(\m S^1)$, and $\mathrm{Rot}(\m S^1)$, which consist of smooth diffeomorphisms, M\"obius transformations, and rotations, respectively. 

We shall pay special attention to the subgroup of quasisymmetric homeomorphisms. We briefly recall its definition and basic properties for the reader who is unfamiliar with the concept and direct to, e.g., \cite{lehto2012univalent} for further details.

\begin{df}\label{df:QS}
    We say that $\varphi \in \Homeo_+(\m S^1)$ is \textit{quasisymmetric} if there exists some $C_0 > 0$ such that 
    \begin{equation}
        \frac{1}{C_0} \leq \left|\frac{\varphi(e^{\ii (\theta + t )})-\varphi(e^{\ii \theta })}{\varphi(e^{\ii \theta })-\varphi(e^{\ii (\theta - t )})}\right| \leq C_0 
    \end{equation}
    for all $\theta \in \m R$ and $t \in (0,2\pi)$. Let $\QS(\m S ^1)$ denote the group of orientation preserving quasisymmetric homeomorphisms of the unit circle $\m S^1$. 
\end{df}

    Beurling and Ahlfors \cite{BA56} proved that $\varphi\in \Homeo_+(\m S^1)$ is quasisymmetric if and only if there exists some quasiconformal homeomorphism $\omega$ of $\m D$ onto itself that extends continuously to $\varphi$ on $\m S^1 = \partial \m D$. That is, the \textit{Beltrami coefficient} 
    \begin{equation*}
        \mu_\o = {\partial_{\bar{z}} \omega}/{\partial_z \omega }
    \end{equation*}
    of $\o$ is defined almost everywhere on $\m D$ and satisfies $\norm{\mu_\o}_{L^\infty(\m D)}:=\sup_{z \in \m D}\abs{\mu_\o(z)}<1$. Heuristically speaking, $\omega$ maps small circles centered at $z\in \m D$ to ellipses with eccentricity $K_\o(z)$, where the function
    \begin{equation*}
        K_\o=\frac{1+\abs{\mu_\o}}{1-\abs{\mu_\o}},
    \end{equation*}
    is called the \textit{dilatation} of $\o$. The Teichm\"uller distance  between two quasisymmetric homeomorphisms is defined as
    \begin{equation*}
        \tau_1(\varphi_1,\varphi_2)=\inf\left\{\frac{1}{2}\log\frac{1+\norm{\frac{\mu_1-\mu_2}{1-\bar\mu_1\mu_2}}_{L^\infty(\m D)}}{1-\norm{\frac{\mu_1-\mu_2}{1-\bar\mu_1\mu_2}}_{L^\infty(\m D)}}\middle|\varphi_1=\o_{\mu_1}|_{\m S^1},\varphi_2=\o_{\mu_2}|_{\m S^1}\right\}.
    \end{equation*}
    The Teichm\"uller distance induces the natural topology on $\QS(\m S^1)$. Let us further introduce two special subgroups of $\QS(\m S^1)$.
    \begin{df}\label{df:Sy}
        We say that $\varphi\in \QS(\m S^1)$ is \textit{symmetric} if $\varphi$ can be extended to a quasiconformal map $\omega$ on $\m D$ whose Beltrami coefficient $\mu_\o$ satisfies $\mu_\o(z)\rightarrow 0$ as $\abs{z}\rightarrow1$. Let $\Sy(\m S^1)$ denote the sets of all symmetric orientation preserving homeomorphisms of $\m S^1$.     \end{df}

        The following subgroup, first introduced in \cite{cui00}, is our protagonist. 
 \begin{df}\label{df:WP}
     We say that $\varphi\in \QS(\m S^1)$ belongs to the\textit{ Weil--Petersson class} if $\varphi$ has a quasiconformal extension $\omega$ to the unit disk whose Beltrami coefficient $\mu_\o$ satisfies
    \begin{equation}\label{eq:WP norm}
        \int_\m D \abs{\mu_\o(z)}^2 (1-\abs z^2)^{-2} \,\dd A(z) < \infty,
    \end{equation}
    where $A$ denotes the area measure.
    Equivalently, $\varphi$ is absolutely continuous (with respect to the arc-length measure) and $\log |\varphi'| \in H^{1/2}(\m S^1)$. Here, $H^{1/2}(\m S^1)$ is the fractional Sobolev space consisting of functions $f:\m S^1 \to \m R$ satisfying 
        \begin{equation} \label{eq 1/2 seminorm}
            \iint_{\m S^1 \times \m S^1} \left|\frac{f(x)-f(y)}{x-y}\right|^2 \dd x \,\dd y < \infty.
        \end{equation}
    Let $\WP(\m S^1)$ denote the set of all quasisymmetric homeomorphisms of the unit circle that belong to the Weil--Petersson class.
 \end{df}
 The equivalence between the two definitions above is due to Yuliang Shen \cite{Shen18}. Moreover, it is proved there that the above two metric induces the same topology. The following is an equivalent definition of $H^{1/2}(\m S^1)$.
 \begin{equation*}
       H^{1/2}(\m S^1)=\left\{f: \m S^1 \rightarrow \m R ~\bigg\vert~ f(e^{\ii \theta})=c_0+\sum_{n\in \m Z\setminus \{0\}} c_n \frac{e^{\ii n\theta}}{\sqrt{|n|}},~\sum_{n=1}^\infty |c_n|^2 < \infty \right\}.
   \end{equation*} We discuss this space in further detail in the next subsection. There is a further multitude of equivalent definitions for the Weil--Petersson class related to various parts of mathematics: see \cite{Bishop_WP} for a compilation. 

 Here is the relationship between the groups of circle homeomorphisms we have considered in this section so far.
    \begin{equation*}
        \Diff_+(\m S^1) \subsetneqq \WP(\m S^1)\subsetneqq \Sy(\m S^1)\subsetneqq \QS(\m S^1) \subsetneqq \CR(\m S^1) \subsetneqq \Homeo_+(\m S^1).
    \end{equation*}

  \begin{remark}
      The universal Teichm\"uller space $T(1)$ and the Weil--Petersson Teichm\"uller space $T_0(1)$ can be represented by $\mob(\m S^1)\backslash \QS(\m S^1)$ and $\mob(\m S^1)\backslash \WP(\m S^1)$, respectively. Identifying these coset spaces with the subgroup of homeomorphisms that fix $-1, -\ii$, and 1, the group structure on these spaces is given by composition. As hinted in the introduction, taking the quotient under left actions by $\mob(\m S^1)$ corresponds to considering the equivalence class of quasicircles on $\hat{\m C}$ modulo M\"obius transformations that fix $0$. The norm \eqref{eq:WP norm} is inherited from the Weil--Petersson metric, which gives the universal Teichm\"uller space a Hilbert structure \cite{TT06}.

      In our work, we shall be mostly interested in the Weil--Petersson Teichm\"uller curve $\mc T_0(1) \simeq \mathrm{Rot}(\m S^1)\backslash \WP(\m S^1)$, which we identify with the cosets of homeomorphisms that fix 1. By conformal welding, these homeomorphisms can be identified with quasicircles in $\m C$ that disconnect 0 from $\infty$ modulo M\"obius transformations that fix 0 and $\infty$. See \cite[Sec.\ I.1]{TT06} for further details.
  \end{remark}
 
\subsection{Sobolev spaces on the unit disk and its boundary}
In this subsection, we firstly introduce the symplectic Hilbert space $\mc H_0$ consisting of elements of the fractional Sobolev space $H^{1/2}(\m S^1)$ with zero mean. Looking ahead, the significance of the space $\mc H_0$ is that it is the Cameron--Martin space of the the log-correlated Gaussian field on $\m S^1$ (see Section \ref{section: GMC}).
Then we consider its compexification and the Poisson integral. Finally, we mention other Sobolev spaces.

Let $\mc H_0$ denote the real Hilbert space
\begin{equation}
\begin{split}
    \mc H_0 :=& \left\{f: \m S^1 \rightarrow \m R ~\bigg\vert~ f(e^{\ii \theta})=\sum_{n\neq 0} c_n \frac{e^{\ii n\theta}}{\sqrt{|n|}} \text{ with } c_{-n} = \overline{c_n},~\sum_{n=1}^\infty |c_n|^2 < \infty \right\}
    \end{split}
\end{equation}
with the inner product 
\begin{equation} \label{eq inner product}
    \left\langle \sum_{n\neq 0} c_n \frac{e^{\ii n\theta}}{\sqrt{|n|}},\sum_{n\neq 0} d_n \frac{e^{\ii n\theta}}{\sqrt{|n|}} \right\rangle = \sum_{n\neq 0} c_n \overline {d_n}.
\end{equation}

We consider the canonical symplectic form $\T$ on $\mc H_0$ introduced in \cite{NS95} as
\begin{equation}\label{equation symplectic form}
    \T(f,g)=\frac{1}{2\pi}\int_{\m S^1} f ~\mathrm{d}g = -\ii \sum_{n=1}^\infty (c_n \bar d_n - c_{-n} \bar d_{-n}).
\end{equation}
for $f(e^{\ii \theta}) = \sum_{n\neq 0} c_n \frac{e^{\ii n\theta}}{\sqrt{|n|}}$ and $g(e^{\ii \theta}) = \sum_{n\neq 0} d_n \frac{e^{\ii n\theta}}{\sqrt{|n|}}$.
 Let us denote the group of bounded symplectomorphisms of $\mc H_0$ as $\operatorname{Sp}(\mc H_0)$.

By complex linearity, the symplectic form $\T$ defined in \eqref{equation symplectic form} extends to the complexification 
\begin{equation}
    \mc H_0^\m C := \left\{f: \m S^1 \rightarrow \m C ~\bigg\vert~ f(e^{\ii \theta})=\sum_{n\neq 0} c_n \frac{e^{\ii n\theta}}{\sqrt{|n|}},~\sum_{n\neq 0} |c_n|^2 < \infty \right\}.
\end{equation}
With respect to $\T$, the Hilbert space $\mc H_0^\m C$ has a canonical decomposition into two closed isotropic subspaces
\begin{equation}
    \mc H_0^\m C = W_+ \oplus W_-,
\end{equation}
where 
\begin{align}
    W_+ &=\left\{f: \m S^1 \rightarrow \m C ~\bigg\vert~ f(e^{\ii \theta})=\sum_{n = 1}^\infty  a_n \frac{e^{\ii n\theta}}{\sqrt{n}}, \,\sum_{n= 1}^\infty |a_n|^2 < \infty \right\},\\
    W_- &=\left\{f: \m S^1 \rightarrow \m C ~\bigg\vert~ f(e^{\ii \theta})=\sum_{n = 1}^\infty  b_n \frac{e^{-\ii n\theta}}{\sqrt{n}}, \,\sum_{n= 1}^\infty |b_n|^2 < \infty \right\}. 
\end{align}
Let
\begin{align}\label{eq:basis}
    \left\{e_n=\frac{e^{\ii n\theta}}{\sqrt{n}}\right\}_{n \geq 1} ~\text{and}~ \left\{f_n=\frac{e^{-in\theta}}{\sqrt{n}}\right\}_{n \geq 1}
\end{align}
denote the standard bases of the subspaces $W_+$ and $W_-$, respectively. Under these bases, $W_+$ and $W_-$ are naturally isomorphic to $\ell^2(\m C)$.

Each element of $\operatorname{Sp}(\mc H_0)$, the group of bounded symplectomorphism of $\mc H_0$, extends to $\mc H_0^\m C$ again by complex linearity. In the basis $\{e_n\}_{n \geq 1}$ and $\{f_n\}_{n \geq 1}$, they can be represented by the matrices
\begin{align}\label{eq pullback matrix rep}
    \left(\begin{array}{cc}
      M &  N\\
         \Bar{N}& \Bar{M}
    \end{array}\right)
    ~\text{where}~ MM^*-NN^* = I, MN^t=NM^t. 
\end{align}
Above, $M^*$ denotes the adjoint matrix of $M$ and $M^t$ denotes the transpose matrix of $M$.

Let us now consider the relationship between $\mc H_0$ and the Dirichlet class of functions on the unit disk $\m D$.

\begin{itemize}
\item The space $\mc H_0$ is naturally isomorphic to the real Hilbert space $\mc D_0$ of harmonic functions $F$ on the unit disk $\m D$ with $F(0)=0$ and finite Dirichlet energy. That is,
\begin{align}
     \mc D_0 &=\left\{F: \m D \rightarrow \m R ~\bigg\vert~ F(z)=\sum_{n> 0} c_n \frac{z^n}{\sqrt{ n}}+c_{-n}\frac{\bar z^n}{\sqrt{ n}},c_{-n} = \overline{c_n}, \sum_{n=1}^\infty |c_n|^2 < \infty\right\}.
\end{align}
If $F \in \mc D_0$, then it is straightforward to check that the trace $f:= F|_{\m S^1}$ on $\m S^1$ is an element of $\mc H_0$ with 
\begin{equation}\label{eq:norm-compare}
        \|f\|_{\mc H_0}^2 = \frac{1}{2\pi}\int_{\m D} |\nabla F|^2 < \infty .
    \end{equation}
     On the other hand, let $\Po (f)$ denote the Poisson integral of an integrable function $f$ on the unit circle $\m S^1$: i.e.,
\begin{equation}
    \Po (f)(z)=\frac{1}{2\pi \ii}\int_{\m S^1} \Re \frac{w+z}{w-z} \frac{f(w)}{w} ~\dd w,~\text{for}~ z \in \m D.
\end{equation}
Then, for each $f\in \mc H_0$, we have $\Po(f) \in \mc D_0$ with $f = \Po(f)|_{\m S^1}$. 
Clearly, this isomorphism between $\mc H_0$ and $\mc D_0$ given by the trace operator and the Poisson integral extends naturally to that between their complexifications.

\item Let $H^1(\m D)$ (resp.\ $H_0^1(\m D)$) be the real Hilbert space given by the completion of the space of smooth functions on $\m D$ (resp.\ with compact support) with respect to the Dirichlet inner product. Then, we have the decomposition 
\begin{align}\label{eq Hilbert decomposition}
    H^1(\m D)= H_0^1(\m D) \oplus \mc D_0 \oplus \m R
\end{align}
as a direct sum with respect to the Dirichlet inner product. We will see later that, from the decomposition above, there exists a decomposition of the Neumann Gaussian free field into the sum of independent Dirichlet Gaussian free field and the ``harmonic'' Gaussian field on $\m D$ (see Remark~\ref{rem:GFF decomposition}).
\end{itemize}

We conclude this subsection by giving a quick overview of fractional Sobolev spaces of general index $s \in \m R$ on the unit disk $\m D$ and its boundary $\m S^1$.

First, let us consider the space $H_0^s(\m D)$ with zero boundary conditions. 
Let $\{g_n\}_{n \geq 1}$ be a sequence of eigenfunctions of the Laplacian $-\Delta$ on $\m D$ with Dirichlet boundary conditions which are mutually orthogonal with respect to the Dirichlet inner product.  Let $\{\lambda_n\}_{n\geq 1}$ be the corresponding eigenvalues. That is, $g_n$ and $\lambda_n$ satisfy
\begin{align}
    \begin{cases}
           -\Delta g_n = \lambda_n g_n \quad &\text{in}~ \m D\\
           g_n = 0 \quad&\text{on}~ \partial \m D
    \end{cases}
\end{align}
for each $n$. The eigenvalues $\{\lambda_n\}_{n \geq 1}$ are positive and satisfy $\lambda_n \rightarrow \infty $ as $n \rightarrow \infty $.

For each $s \in \m R$, we define $H_0^s(\m D)$ as the real Hilbert space obtained by taking the completion of the set of smooth, compactly supported real-valued functions on the unit disk with respect to the inner product
\begin{equation}\label{eq fractional inner product}
    \langle f, g\rangle _s = \sum_{n \geq 1} \lambda_n^s \langle f, g_n\rangle _{L^2} {\langle g, g_n \rangle _{L^2}} .
\end{equation}
When $s=1$, this inner product is the standard Dirichlet inner product on $\m D$. 

We define the Sobolev space $H^s(\m D)$ with free boundary conditions analogously using the eigenfunctions of $-\Delta$ on $\m D$ with Neumann boundary conditions. For the eigenvalue 0 corresponding to the constant function, we let $0^s:=1$ for all $s\in \m R$ in \eqref{eq fractional inner product}.

Similarly, we can define the Sobolev space $H^s(\m S^1)$ of functions on $\m S^1$ replacing the operator $-\Delta$ by $-\partial_{\theta\theta}$.
Then, $\mc H_0$ agrees with the closed subspace $H^{1/2}(\m S^1)/\m R$ of functions $f\in \mc H$ with $\int_{\m S^1} f = 0$ (``mean zero''). In other words, $\mc H$ consists of functions in $\mc H_0$ plus a constant. Let us denote the natural projection $H^{1/2}(\m S^1) \to \mc H_0$ as
\begin{equation}
    \pi_0(f) := f - \frac{1}{2\pi}\int_{\m S^1} f.
\end{equation}
The equivalence between this definition of $H^{1/2}(\m S^1)$ and the condition \eqref{eq 1/2 seminorm} can be found in introductory texts on fractional Sobolev spaces.
\subsection{The pullback operator}
In this subsection, we introduce the pullback operator as a right group action of $\Homeo_+(\m S^1)$ on $\mc H_0$. Then we prove a key relationship (Lemma \ref{lem properties of Pi_var}) with the Weil--Petersson class. 

\begin{df}

Given a orientation preserving homeomorphism $\varphi \in \Homeo_+(\m S^1)$, we define the pullback operator $ \Pi(\varphi)$ on $\mc H_0^{\m C}$ as
\begin{equation}\label{Eq df of pullback operator}
    \Pi(\varphi)(f):= \pi_0(f\circ \varphi) = f \circ \varphi - \frac{1}{2\pi}\int_{\m S^1} f \circ \varphi ~\mathrm{d}\theta.
\end{equation}
\end{df}
Let us denote the space of bounded linear operators from $\mc H_0^\m C$ to itself as $\ms B(\mc H_0^\m C)$.
Nag and Sullivan \cite{NS95} proved that $\Pi(\varphi)\in \ms B(\mc H_0^\m C)$ if and only if $\varphi \in \QS(\m S^1)$, and that the assignment 
    $\Pi : \QS(\m S^1) \rightarrow \ms B(\mc H_0^\m C)$

defines a right group action on $\mc H_0^\m C$ by symplectomorphisms. In the basis $\{e_n\}_{n \geq 1}$ and $\{f_n\}_{n \geq 1}$ given in \eqref{eq:basis}, the symplectomorphism $\Pi(\varphi)$ for $\varphi \in \QS(\m S^1)$ can be represented by the matrix of the form \eqref{eq pullback matrix rep}, whose entries are given by
\begin{align}\label{eq:op-entry-m}
    M_{mn}(\varphi) &= \frac{1}{2\pi} \sqrt{\frac{m}{n}} \int_{\m S^1} \left(\varphi(e^{\ii \theta})\right)^n e^{-\ii m\theta} \mathrm{d}\theta,\\
\label{eq:op-entry-n}
    N_{mn}(\varphi) &= \frac{1}{2\pi} \sqrt{\frac{m}{n}} \int_{\m S^1} \left(\varphi(e^{\ii \theta})\right)^{-n} e^{-\ii m\theta} \mathrm{d}\theta.
\end{align}
    We note that the operator $\Pi(\varphi)$ preserves the subspaces $W_+$ and $W_-$ (i.e., $\Pi(\varphi)$ belongs to the unitary subgroup $\Un(\mc H_0)$ of $\SP (\mc H_0)$ consisting of bounded symplectomorphisms with $N=0$) if and only if $\varphi \in \mob(\m S^1)$. 

We now give the key result of this section, which will be used in Section \ref{section: GMC} to identify the quasi-invariance of the log-correlated Gaussian field on $\m S^1$ under pullbacks by $\Homeo_+(\m S^1)$. First, let us recall the definition of a Hilbert--Schmidt operator.

\begin{df}
    For any bounded linear operator $T$ from a Hilbert space $H$ to itself, we define the \textit{Hilbert--Schmidt norm} $\norm T_{\HS}$ by 
     \begin{equation}\label{df Hilbert--Schmidt norm}
        \norm T_{\HS}^2 := \sum_{\mr a\in \mc A} \norm {T e_\mr a}_{H}^2 = \TR (T^*T),
     \end{equation}
     where $\{e_\mr a, \mr a \in \mc A \}$ is any orthonormal basis of $H$ and $T^*$ is the adjoint operator of $T$. For a self-adjoint bounded linear operator $A$, the trace $\TR(A)$ of $A$ is defined by the sum of all its eigenvalues if the series is absolutely summable and set to be $+\infty$ otherwise. We say $T$ is \textit{Hilbert--Schmidt} if $\|T\|_{\HS}<\infty$. The collection of all Hilbert--Schmidt operators on $H$ forms a Hilbert space $\HS(H)$ with respect to the norm \eqref{df Hilbert--Schmidt norm}.
\end{df}
Note that if $H^\m C$ is the complexification of $H$, then the complex extension of $T$ belongs to $\HS(H^\m C)$ if and only if $T \in \HS(H)$. If $T \in \HS(H)$ and $S \in \ms B(H)$, then $T^t$, $T^*$, $ST$, and $TS$ belong to $\HS(H)$.
\begin{lem}\label{lem properties of Pi_var}
Assume $\varphi \in \QS(\m S^1)$ and let $\Pi(\varphi)\in \ms B(\mc H_0)$ be the pullback operator defined by \eqref{Eq df of pullback operator}. Then, $\Pi(\varphi)\Pi(\varphi)^*-I$ is Hilbert--Schmidt if and only if $\varphi \in \WP(\m S^1)$.
\end{lem}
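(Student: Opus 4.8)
The plan is to reduce the Hilbert--Schmidt condition on $\Pi(\varphi)\Pi(\varphi)^* - I$ to a condition on the off-diagonal block $N(\varphi)$ from the matrix representation \eqref{eq pullback matrix rep}, and then invoke the known characterization of the Weil--Petersson class in terms of Hilbert--Schmidt-ness of that block. First I would write $\Pi(\varphi)$ in the block form $\left(\begin{smallmatrix} M & N \\ \bar N & \bar M \end{smallmatrix}\right)$ acting on $\mc H_0^\m C = W_+ \oplus W_-$, with the symplectic relations $MM^* - NN^* = I$ and $MN^t = NM^t$. A direct computation of $\Pi(\varphi)\Pi(\varphi)^*$ in this block form gives diagonal blocks $MM^* + NN^*$ and $\bar N \bar N^* + \bar M \bar M^* = \overline{NN^* + MM^*}$, and off-diagonal blocks $MN^t + N\bar M^t$ (and its conjugate). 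Using $MM^* - NN^* = I$, the diagonal blocks become $I + 2NN^*$, so the diagonal part of $\Pi(\varphi)\Pi(\varphi)^* - I$ is $\left(\begin{smallmatrix} 2NN^* & 0 \\ 0 & 2\overline{NN^*}\end{smallmatrix}\right)$. The key point is that $NN^*$ is Hilbert--Schmidt if and only if $N$ is Hilbert--Schmidt (since $\|NN^*\|_{\HS} \le \|N\|_{\HS}\|N^*\| $ in one direction, and $\|N\|_{\HS}^2 = \tr(NN^*) \le \|NN^*\|_{\HS}^{1/2}\cdot(\text{rank-type bound})$ — more cleanly, $NN^*$ Hilbert--Schmidt means $N$ is in the fourth Schatten class, but actually one should argue that $NN^* \in \HS$ iff $N\in\HS$ directly: if $N$ is not Hilbert--Schmidt then $\tr(NN^*) = \infty$, and for a positive operator being trace-class is implied by being Hilbert--Schmidt only when... ). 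Let me instead route through the cleaner fact: for the \emph{diagonal} block we actually want $NN^* - $ nothing, so we need $NN^* \in \HS(W_+)$; and the off-diagonal block $MN^t + N\bar M^t \in \HS$. I would show both of these are equivalent to $N \in \HS$.

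The main steps, in order, are: (i) derive the block identities above from \eqref{eq pullback matrix rep}; (ii) show that $\Pi(\varphi)\Pi(\varphi)^* - I$ is Hilbert--Schmidt iff both $NN^* \in \HS(W_+)$ and $MN^t + N\bar M^t \in \HS$, simply because the Hilbert--Schmidt norm of a block operator is the $\ell^2$-sum of the Hilbert--Schmidt norms of its blocks; (iii) show $NN^* \in \HS$ iff $N \in \HS$ — here I would use that if $N \in \HS$ then $NN^*$ is trace class hence Hilbert--Schmidt, and conversely if $N \notin \HS$ then $NN^*$ is a positive operator with $\tr(NN^*) = \|N\|_{\HS}^2 = \infty$, and a positive operator that is Hilbert--Schmidt need not be trace class, so this direction requires more care; the safe route is to observe that when $\varphi\in\QS$, $M$ is invertible with bounded inverse (since $MM^* = I + NN^* \ge I$), so $N = (MN^t + N\bar M^t)$-type manipulations let one extract $N$ from the off-diagonal block: indeed $N\bar M^t + MN^t$ and boundedness of $M, \bar M^{-1}$ give that the off-diagonal block is Hilbert--Schmidt iff $N$ is; (iv) combine to conclude $\Pi(\varphi)\Pi(\varphi)^* - I \in \HS$ iff $N(\varphi) \in \HS(W_+)$; (v) invoke the results of \cite{Sch81,TT06,HS12} — as cited in the paragraph preceding the lemma — that $N(\varphi) = N(\Pi(\varphi))$ is Hilbert--Schmidt if and only if $\varphi \in \WP(\m S^1)$.

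The main obstacle I expect is step (iii)/(iv): cleanly showing that the combination of the diagonal block $NN^*$ and the off-diagonal block $MN^t + N\bar M^t$ being Hilbert--Schmidt is \emph{exactly} equivalent to $N \in \HS$, without accidentally proving only one implication. The clean way around it is to not treat the blocks separately at all for the harder direction: if $N \in \HS(W_+)$, then since $M$ and $\bar M$ are bounded (as $\varphi \in \QS$), every block of $\Pi(\varphi)\Pi(\varphi)^* - I$ — namely $2NN^*$, $MN^t + N\bar M^t$, and conjugates — is a product of a Hilbert--Schmidt operator with bounded operators, hence Hilbert--Schmidt, so $\Pi(\varphi)\Pi(\varphi)^* - I \in \HS$. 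For the converse, if $\Pi(\varphi)\Pi(\varphi)^* - I \in \HS$, then its diagonal block $2NN^*$ is Hilbert--Schmidt and positive; but also $MM^* = I + NN^*$ so $NN^* = MM^* - I$, and from $\|\Pi(\varphi)\|_{op}<\infty$ we get $\|N\|_{op} < \infty$; then $\|N\|_{\HS}^2 = \tr(NN^*)$ and one must upgrade "$NN^*\in\HS$ and positive" to "$NN^* \in$ trace class". This last upgrade is false in general, so instead I would use the off-diagonal block together with invertibility of $\bar M$: from $MN^t + N\bar M^t \in \HS$ and $N^t = M^{-1}(\text{block} - N\bar M^t)$ being a sum of a Hilbert--Schmidt and a bounded-times-$N$ term... which still contains $N$. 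The genuinely clean resolution, which I would adopt, is: observe $\Pi(\varphi)\Pi(\varphi)^* - I$ Hilbert--Schmidt forces the full operator $\Pi(\varphi)^*\Pi(\varphi) - I$ Hilbert--Schmidt too (by conjugation-invariance of $\HS$ under the bounded invertible $\Pi(\varphi)$), and a symmetrized computation of $\Pi(\varphi)^*\Pi(\varphi) - I$ likewise has diagonal block a positive multiple of $N^*N$ plus the same structure, so averaging the two shows $N^*N$ and $NN^*$ are both Hilbert--Schmidt with the \emph{off-diagonal} terms producing a genuine $N$-linear (not $NN^*$-quadratic) Hilbert--Schmidt constraint — it is the off-diagonal block $MN^t + N\bar M^t$, pre/post-composed with the bounded inverses of $M$ and $\bar M$, that yields $N^t \in \HS$ hence $N \in \HS$. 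Sorting out exactly this bookkeeping is where the real work lies; everything else is formal block algebra plus citation.
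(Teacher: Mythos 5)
Your overall strategy --- reduce the whole statement to the single condition $N(\varphi)\in\HS$ and then cite the known characterization from \cite{HS12} --- is exactly the paper's, but your execution stalls on a difficulty that is an artifact of a computational slip. The adjoint of $\bigl(\begin{smallmatrix} M & N\\ \bar N & \bar M\end{smallmatrix}\bigr)$ is $\bigl(\begin{smallmatrix} M^* & N^t\\ N^* & M^t\end{smallmatrix}\bigr)$, so the $(1,2)$ block of $\Pi(\varphi)\Pi(\varphi)^*$ is $MN^t+NM^t=2MN^t$ (by the symplectic relation $MN^t=NM^t$), not $MN^t+N\bar M^t$. Once this is corrected, the converse direction you were worried about is immediate, and you never need the diagonal block $2NN^*$ at all: since $MM^*=I+NN^*\ge I$ and $M^*M=I+N^t\bar N\ge I$, the operator $M$ is invertible with $\|M^{-1}\|\le 1$, so $2MN^t\in\HS$ forces $N^t\in\HS$ and hence $N\in\HS$. (Your caution about the diagonal block was warranted --- a positive Hilbert--Schmidt operator need not be trace class, so $NN^*\in\HS$ alone does not yield $N\in\HS$.) As written, your final paragraph does not close the argument: with your incorrect block one has $MN^t+N\bar M^t=N(M^t+M^*)$, and $M^t+M^*$ has no reason to be invertible, while the symmetrization and conjugation detours you sketch are unnecessary.

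For comparison, the paper avoids the block-by-block computation entirely with one identity: since $\Pi(\varphi)^{-1}=\bigl(\begin{smallmatrix} M^* & -N^t\\ -N^* & M^t\end{smallmatrix}\bigr)$, one has
\begin{equation*}
\Pi(\varphi)\Pi(\varphi)^*-I=\Pi(\varphi)\bigl(\Pi(\varphi)^*-\Pi(\varphi)^{-1}\bigr)=2\,\Pi(\varphi)\begin{pmatrix} 0 & N^t\\ N^* & 0\end{pmatrix},
\end{equation*}
and since $\Pi(\varphi)$ and $\Pi(\varphi)^{-1}=\Pi(\varphi^{-1})$ are both bounded for $\varphi\in\QS(\m S^1)$, the left-hand side is Hilbert--Schmidt if and only if the antidiagonal factor is, i.e.\ if and only if $N\in\HS$; then \cite[Thm.~2.2]{HS12} finishes. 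Your step (v) citation is the right one, so the fix needed is only the block computation above.
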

\begin{proof}
Recall the matrices $M$ and $N$ associated with $\Pi(\varphi)$ as given by \eqref{eq:op-entry-m}--\eqref{eq:op-entry-n}. Since $\Pi(\varphi) \in \SP(\mc H_0)$, we have 
\begin{equation*}
    \begin{pmatrix} M & N \\ \Bar{N} & \Bar{M} \end{pmatrix} 
    \begin{pmatrix} M^* & -N^t \\ -N^* & M^t   \end{pmatrix} = I.
\end{equation*}
as in \eqref{eq pullback matrix rep}. Thus,
\begin{equation}\label{eq hs calculation}
    \Pi(\varphi) \Pi(\varphi)^* - I = 
    \begin{pmatrix} M & N \\ \Bar{N} & \Bar{M} \end{pmatrix} 
    \begin{pmatrix} M^* & N^t \\ N^* & M^t   \end{pmatrix} - I = 2\Pi(\varphi) \begin{pmatrix} & N^t \\ N^* &  \end{pmatrix}.
\end{equation}
In \cite[Thm.\ 2.2]{HS12}, it was shown (up to the isomorphism described in the previous subsection) that $N$ is Hilbert--Schmidt if and only if $\varphi \in \WP(\m S^1)$. Combined with the fact that $\Pi(\varphi)$ and $\Pi(\varphi)^{-1} = \Pi(\varphi^{-1})$ are bounded for $\varphi \in \QS(\m S^1)$, we obtain the desired result.
\end{proof}
\begin{remark}
    For the same reason, the operator $\Pi(\varphi)\Pi(\varphi)^*-I$ is compact if and only if $\varphi \in \Sy(\m S^1)$.
\end{remark}
In fact, we will need that all characterizations of $\WP(\m S^1)$ that we have discussed so far induce the same topology, which we check by retracing the proofs of equivalences among the various definitions.

\begin{lem}\label{lem:wp-approx}
    If $\varphi \in \WP(\m S^1)$, then there exists a sequence $\varphi_n \in \Diff_+(\m S^1)$ such that $\sup_{x\in \m S^1}|\varphi_n(x) - \varphi(x)|$, $\norm{\Pi(\varphi_n\circ\varphi^{-1})\Pi(\varphi_n\circ\varphi^{-1})^*-I}_{\HS}$, $\norm{\log|(\varphi_n \circ\varphi^{-1})'|}_{H^{1/2}(\m S^1)}$ and $\norm{\Pi(\varphi_n)u-\Pi(\varphi)u}_{H^{1/2}(\m S^1)}$ for any $u \in H^{1/2}(\m S^1)$
    all converge to 0 as $n\to\infty$.
    \end{lem}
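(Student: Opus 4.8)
The plan is to reduce everything to a single, well-chosen approximation scheme for $\varphi$ by smooth diffeomorphisms and then to verify the four convergences one at a time, reusing the known equivalences of the Weil--Petersson topology rather than reproving them. First I would recall the standard construction of the approximating sequence: writing $\varphi = \omega|_{\m S^1}$ for a quasiconformal extension $\omega$ with Beltrami coefficient $\mu_\omega$ satisfying the integrability \eqref{eq:WP norm}, one dilates $\mu_\omega$ toward the boundary — e.g.\ set $\mu_n(z) := \mu_\omega((1-1/n)z)\,\1_{\{|z|<1-1/n\}}$, or equivalently $\mu_n = \mu_\omega \circ \rho_n$ for the radial contractions $\rho_n$ — and lets $\varphi_n$ be the boundary value of the solution $\omega_{\mu_n}$ of the Beltrami equation suitably normalized. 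Since each $\mu_n$ is smooth and compactly supported in $\m D$, the map $\omega_{\mu_n}$ is a diffeomorphism of $\m D$ up to the boundary, so $\varphi_n \in \Diff_+(\m S^1)$; and $\mu_n \to \mu_\omega$ both in $L^\infty$-norm (since $\mu_\omega$ has a continuous representative away from the unit circle in the relevant sense) and in the Weil--Petersson Hilbert norm \eqref{eq:WP norm}, by dominated convergence using the invariance of the hyperbolic area element under $\rho_n$. This is the standard fact underlying the density of diffeomorphisms in $\WP(\m S^1)$, due to Takhtajan--Teo; I would cite \cite{TT06,Shen18} for it.

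The four convergences then fall out as follows. Uniform convergence $\sup_{x}|\varphi_n(x)-\varphi(x)|\to 0$ follows from continuity of the map $\mu \mapsto \omega_\mu|_{\m S^1}$ from $(L^\infty\text{-ball},\|\cdot\|_\infty)$ to $(\Homeo_+(\m S^1), \text{sup metric})$, which is classical quasiconformal theory (dependence of the normalized solution of the Beltrami equation on parameters). Convergence of $\tau_1(\varphi_n,\varphi)$-type quantities in the Weil--Petersson metric — which is what $\norm{\log|(\varphi_n\circ\varphi^{-1})'|}_{H^{1/2}}\to 0$ and $\norm{\Pi(\varphi_n\circ\varphi^{-1})\Pi(\varphi_n\circ\varphi^{-1})^*-I}_{\HS}\to 0$ encode — follows because $\varphi_n\circ\varphi^{-1} \to \mathrm{id}$ in $\WP(\m S^1)$: the composition $\omega_{\mu_n}\circ\omega_\mu^{-1}$ has Beltrami coefficient whose Weil--Petersson norm is controlled by $\|\mu_n-\mu_\omega\|$ in both norms above (via the formula for the Beltrami coefficient of a composition and the invariance properties of the hyperbolic metric), and then I invoke Shen's theorem \cite{Shen18} that the three metrics — the Weil--Petersson Teichm\"uller metric, the $H^{1/2}$-norm of $\log|\varphi'|$, and (by Lemma~\ref{lem properties of Pi_var} together with \cite{HS12,NS95}) the Hilbert--Schmidt norm of $\Pi\Pi^*-I$ — induce the same topology near the identity. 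Concretely: $\varphi_n\circ\varphi^{-1}\to\mathrm{id}$ in the Teichm\"uller metric $\Rightarrow$ $\log|(\varphi_n\circ\varphi^{-1})'|\to 0$ in $H^{1/2}$ $\Rightarrow$ (since the off-diagonal block $N(\varphi_n\circ\varphi^{-1})$ depends continuously on the $H^{1/2}$-data by the estimates in \cite{HS12}, and $N(\mathrm{id})=0$) $\norm{N(\varphi_n\circ\varphi^{-1})}_{\HS}\to 0$, whence \eqref{eq hs calculation} gives the third convergence.

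For the last convergence, $\norm{\Pi(\varphi_n)u - \Pi(\varphi)u}_{H^{1/2}}\to 0$ for each fixed $u\in H^{1/2}(\m S^1)$, I would argue by a density-plus-uniform-boundedness argument: the operators $\Pi(\varphi_n)$ are uniformly bounded on $\mc H_0$ (their operator norms are controlled by the quasisymmetry constants of $\varphi_n$, which stay bounded since $\varphi_n\to\varphi$ in $\QS$), so it suffices to check strong convergence on the dense subspace of trigonometric polynomials; and for $u(e^{\ii\theta})=e^{\ii k\theta}$ we have $\Pi(\varphi_n)u = \pi_0((\varphi_n)^k) \to \pi_0(\varphi^k) = \Pi(\varphi)u$ in $H^{1/2}$, because $\varphi_n\to\varphi$ uniformly forces $(\varphi_n)^k\to\varphi^k$ uniformly, hence in $L^2$, and — crucially — the $H^{1/2}$-norms $\|(\varphi_n)^k\|_{H^{1/2}}$ are uniformly bounded (again by the uniform quasisymmetry, since $\theta\mapsto\varphi_n(e^{\ii\theta})^k$ has $H^{1/2}$-seminorm controlled by that of $\varphi_n$), so weak $H^{1/2}$-convergence upgrades to norm convergence along the bounded sequence using that $L^2$-convergence pins down the weak limit. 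I expect the main obstacle to be making the continuity estimates for $\mu\mapsto\varphi$ quantitative \emph{in the Weil--Petersson norms simultaneously} — i.e.\ checking that the single sequence $\varphi_n$ works for all four quantities at once, and in particular that the passage from "$\mu_n\to\mu_\omega$ in the WP-Hilbert norm" to "$\varphi_n\circ\varphi^{-1}\to\mathrm{id}$ in $H^{1/2}$" is clean; this is where one must carefully retrace the proofs of the equivalences in \cite{Shen18,HS12,TT06} rather than cite a single off-the-shelf statement, which is exactly the bookkeeping the lemma is asserting can be done.
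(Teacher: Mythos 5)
Your construction and overall architecture match the paper's: truncate the Beltrami coefficient of a quasiconformal extension of $\varphi$ at radius $1-1/n$, let $\varphi_n$ be the boundary values of the resulting normalized quasiconformal self-map of $\m D$ (these are real-analytic diffeomorphisms because the map is conformal near $\m S^1$ and reflects across it --- not because $\mu_n$ is smooth, which it generally is not), and then invoke \cite{Shen18} and \cite{HS12} to convert convergence of the Beltrami data into the $H^{1/2}$ and Hilbert--Schmidt statements. However, three of your individual steps would fail as written. First, $\mu_n \to \mu_\omega$ in $L^\infty$ is false in general: the Weil--Petersson condition \eqref{eq:WP norm} is an $L^2$ condition against hyperbolic area and does not force the essential supremum of $|\mu_\omega|$ over the annulus $\{1-1/n<|z|<1\}$ to tend to $0$ (take $|\mu_\omega|=1/2$ on small disks accumulating at $\m S^1$ with summable hyperbolic area). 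So your route to $\sup_x|\varphi_n(x)-\varphi(x)|\to 0$ via $L^\infty$-continuity of $\mu\mapsto\omega_\mu|_{\m S^1}$ breaks down; the paper instead gets uniform convergence from a normal-family argument, which needs only $\|\mu_n\|_\infty\le\|\mu_\omega\|_\infty<1$ uniformly, a.e.\ convergence $\mu_n\to\mu_\omega$, and the three-point normalization.

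Second, your chain ``$\varphi_n\circ\varphi^{-1}\to\mathrm{id}$ in the Teichm\"uller metric $\Rightarrow \log|(\varphi_n\circ\varphi^{-1})'|\to 0$ in $H^{1/2}$'' has the implication backwards: the Weil--Petersson topology is strictly \emph{finer} than the Teichm\"uller topology on $T_0(1)$, so Teichm\"uller convergence gives nothing in $H^{1/2}$. What the truncation actually provides, and what \cite[Thm.~1.4]{Shen18} converts into the $H^{1/2}$ statement, is convergence of the Beltrami coefficient of $\omega_{\mu_n}\circ\omega_{\mu}^{-1}$ in the Hilbert norm \eqref{eq:WP norm}; this is immediate by dominated convergence for the sharp truncation, whereas your radial contraction does \emph{not} preserve the hyperbolic area element and needs a separate distortion estimate. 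Third, for $\norm{\Pi(\varphi_n)u-\Pi(\varphi)u}_{H^{1/2}}\to 0$: weak $H^{1/2}$-convergence plus a uniform norm bound does not upgrade to norm convergence in a Hilbert space. This is patchable --- write $\Pi(\varphi_n)=\Pi(\varphi)\Pi(\varphi_n\circ\varphi^{-1})$ and use the Hilbert--Schmidt convergence you already have, via \eqref{eq hs calculation}, to get convergence of the norms $\norm{\Pi(\varphi_n\circ\varphi^{-1})u}\to\norm{u}$, which together with weak convergence yields strong convergence --- but as stated the step fails; the paper instead cites \cite[Prop.~4.1]{Shen18} for the strong operator convergence directly.
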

\begin{proof}
    It is shown in \cite[Thm.~1.4]{Shen18} that the $H^{1/2}(\m S^1)$ norm for $\log\abs{\varphi'}$ induces the same topology as the Weil--Petersson metric. It is shown in \cite[Prop.~4.1]{Shen18} that the topology for $\varphi$ induced by the Teichm\"uller distance is finer than the strong operator topology for $\Pi(\varphi)$. From the proof of \cite[Thm.~4.2]{HS12}, when the dilatation is bounded, this is equivalent to the Hilbert--Schmidt norm for the matrix $N$ with entries \eqref{eq:op-entry-n} and therefore the Hilbert--Schmidt norm for the matrix $\Pi(\varphi)\Pi(\varphi)^*-I$ by \eqref{eq hs calculation}. The uniform convergence follows from the argument using the normal family. More precisely, up to subsequence, we can choose $\varphi_n$ to be the boundary of the quasiconformal self-homeomorphism $\o_n$ of $\m D$  that agrees with $\varphi$ at $1,\ii,-1$ with Beltrami coefficient 
        \begin{equation*}
            \mu_{n}(z) = \mu(z) \mathbf{1}_{\{|z| < 1-1/n\}}
        \end{equation*}
        where $\mu$ is the Beltrami coefficient of the Douady--Earle extension $\o$ of $\varphi$ to $\m D$.
\end{proof}
\subsection{Estimates on the log-ratio}
Now, we aim to informally replace the derivative with the difference quotient, but we actually use conformal welding. Looking ahead, it corresponds to adding one boundary marked point on the quantum disk, see Corollary~\ref{cor: one marked boundary}.
Let $H^{1/2}(\m S^1,\m C)$ denote the complexification of $H^{1/2}(\m S^1)$.
\begin{lem}\label{lem log difference}
    If $\varphi \in \WP(\m S^1)$, then for a.e. $ z_0 \in \m S^1$,
    \begin{equation}\label{eq:diff ratio}
        u_\varphi(\cdot,z_0):=\log{\frac{\varphi(\cdot)-\varphi(z_0)}{\cdot-z_0}} \in H^{1/2}(\m S^1,\m C).
    \end{equation}
    Moreover, the function $z_0 \mapsto \norm{u_\varphi(\cdot,z_0)}_{H^{1/2}(\m S^1, \m C)}$ is $L^2$-integrable.
\end{lem}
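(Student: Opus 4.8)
\textbf{Proof proposal for Lemma~\ref{lem log difference}.}

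The plan is to reduce the statement to a computation on Fourier coefficients and then to an averaging argument. Since $\varphi \in \WP(\m S^1)$, we know from Definition~\ref{df:WP} that $\log|\varphi'| \in H^{1/2}(\m S^1)$; moreover $\varphi$ is absolutely continuous with a.e.-positive derivative, so $\log|\varphi'|$ is a genuine real-valued element of $H^{1/2}$. The key observation is the following identity, valid for a.e.\ $z_0$ on the diagonal: writing $u_\varphi(w, z_0) = \log\frac{\varphi(w)-\varphi(z_0)}{w - z_0}$, the boundary values of the holomorphic function $z \mapsto \log\frac{f(z) - f(z_0)}{z - z_0}$ (where $f: \m D \to \Omega$ is the interior Riemann map with $f|_{\m S^1} = \varphi$ suitably normalized) are exactly $u_\varphi(\cdot, z_0)$, and its real part is $\log\big|\frac{\varphi(\cdot)-\varphi(z_0)}{\cdot - z_0}\big|$. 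So first I would set up the Hardy-space / conformal-welding picture: $u_\varphi(\cdot, z_0)$ is the trace of a function holomorphic in a neighborhood of $\ad{\m D}$ minus the point $z_0$, and membership in $H^{1/2}(\m S^1, \m C)$ is equivalent to the real part being in $H^{1/2}(\m S^1)$ (since for a holomorphic function the imaginary part is the Hilbert transform of the real part, and the Hilbert transform is bounded on $H^{1/2}$). Thus it suffices to show $\log\big|\frac{\varphi(\cdot)-\varphi(z_0)}{\cdot - z_0}\big| \in H^{1/2}(\m S^1)$ for a.e.\ $z_0$, together with the $L^2$-integrability of its norm in $z_0$.

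Next I would compute the $H^{1/2}$-seminorm of $v_{z_0}(w) := \log\big|\frac{\varphi(w)-\varphi(z_0)}{w-z_0}\big|$ using the double-integral expression \eqref{eq 1/2 seminorm} and then integrate over $z_0 \in \m S^1$. The quantity $\int_{\m S^1} \|v_{z_0}\|_{H^{1/2}}^2 \, |dz_0|$ becomes a triple integral
\begin{equation*}
  \iiint \left| \frac{\log\big|\frac{\varphi(x)-\varphi(z_0)}{x-z_0}\big| - \log\big|\frac{\varphi(y)-\varphi(z_0)}{y-z_0}\big|}{x-y} \right|^2 \, |dx|\,|dy|\,|dz_0|.
\end{equation*}
The numerator rearranges to $\log\big|\frac{(\varphi(x)-\varphi(z_0))(y-z_0)}{(\varphi(y)-\varphi(z_0))(x-z_0)}\big|$, which is a cross-ratio of the four points $\varphi(x),\varphi(y),\varphi(z_0),\infty$ versus $x,y,z_0,\infty$; I would relate this, by the fundamental theorem of calculus applied to $t \mapsto \log|\varphi(t) - \varphi(z_0)| - \log|t - z_0|$, to an integral of $\log|\varphi'| - (\text{something bounded})$, or more cleanly use the identity $\log\frac{\varphi(x)-\varphi(z_0)}{x - z_0} = \int_0^1 \frac{\varphi'(z_0 + s(x-z_0))(x-z_0)}{\varphi(z_0+s(x-z_0)) - \varphi(z_0)} \, ds$ along the chord — except the chord exits $\m S^1$, so the honest approach is to work with the harmonic extension. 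The cleanest route: apply the fractional chain/Leibniz estimates together with the characterization that $\log|\varphi'| \in H^{1/2}$ means $\operatorname{Re} \log f' \in H^{1/2}$ where $f' \in H^p$ for all $p$ small by the Weil--Petersson assumption (quasicircle regularity), and use that for $g \in H^{1/2} \cap L^\infty$ and $h$ bounded holomorphic, products and compositions behave well; then the averaged estimate follows from Fubini and the boundedness of a suitable maximal/fractional-integral operator.

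I expect the main obstacle to be the $L^2$-in-$z_0$ bound rather than the a.e.\ finiteness: for a single $z_0$ one can invoke removability/quasicircle regularity of the Weil--Petersson curve and the fact that $w \mapsto \frac{f(w) - f(z_0)}{w - z_0}$ is non-vanishing holomorphic on $\ad{\m D}$, so its log is in $H^{1/2}$ by composition with $\log$ on a compact non-zero set combined with $f' \in H^{1/2}(\m D)$-type bounds; but controlling the $z_0$-dependence uniformly, especially as $z_0$ approaches $x$ or $y$ where the cross-ratio can degenerate, requires a careful splitting of the triple integral into a near-diagonal region (where one uses that $\log|\varphi'| \in H^{1/2}$ to absorb the singularity, exploiting $|\varphi(x) - \varphi(z_0)| \asymp |x - z_0| \cdot |\varphi'(\xi)|$ for some $\xi$ and Hölder-type continuity of $\log|\varphi'|$) and a far-from-diagonal region (where the integrand is bounded and Fubini is immediate). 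I would handle the near-diagonal piece by a dyadic decomposition in $|x - y|$ and $\operatorname{dist}(z_0, \{x,y\})$, reducing to the known estimate $\iint \big|\frac{\log|\varphi'|(x) - \log|\varphi'|(y)}{x-y}\big|^2 \,|dx|\,|dy| < \infty$ and to the standard fact that the $H^{1/2}$ norm controls a weak-$L^2$ bound on difference quotients, so that integrating one more variable against a kernel of the form $|x - z_0|^{-1}\wedge 1$ stays finite. Once this triple integral is shown finite, Fubini gives $\|u_\varphi(\cdot, z_0)\|_{H^{1/2}} < \infty$ for a.e.\ $z_0$ and $z_0 \mapsto \|u_\varphi(\cdot, z_0)\|_{H^{1/2}}$ in $L^2$, completing the proof.
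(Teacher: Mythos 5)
There is a genuine gap, and it appears at the very first step. You identify $u_\varphi(\cdot,z_0)$ with the boundary values of the holomorphic function $z\mapsto \log\frac{f(z)-f(z_0)}{z-z_0}$, ``where $f:\m D\to\Omega$ is the interior Riemann map with $f|_{\m S^1}=\varphi$.'' But $\varphi$ is a circle \emph{homeomorphism} arising as a welding, $\varphi=(g^{-1}\circ f)|_{\m S^1}$; it is not the boundary trace of a conformal map of $\m D$, so $u_\varphi(\cdot,z_0)$ is \emph{not} the boundary value of a function holomorphic in $\m D$. Consequently your reduction to the real part via the Hilbert transform is unjustified, and the imaginary part $\arg\frac{\varphi(\cdot)-\varphi(z_0)}{\cdot-z_0}$ (which on $\m S^1$ is essentially $\tfrac12(\arg\varphi(e^{\ii\theta})-\theta)$ plus a constant) is left entirely uncontrolled. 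The paper's proof inserts exactly the step you are missing: it writes $u_\varphi(\cdot,z_0)=u_f(\cdot,z_0)-u_g(\varphi(\cdot),\varphi(z_0))$ using the welding decomposition, notes that $H^{1/2}(\m S^1,\m C)$ is invariant under pullback by the quasisymmetric map $\varphi$, and thereby reduces to the two genuinely holomorphic quantities $u_f$ and $u_g$. For those, instead of estimating a triple integral, it invokes the Takhtajan--Teo expansion $u_f(z,w)=\sum_{n,m\ge 0}c_{n,m}z^nw^m$ with $\sum_{n,m}|nm|\,|c_{n,m}|^2<\infty$ (equivalent to the Grunsky operator being Hilbert--Schmidt for Weil--Petersson curves); both the a.e.\ finiteness and the $L^2$-in-$z_0$ bound then follow from a two-line Fubini computation on the Fourier coefficients, since $\|u_f(\cdot,z_0)\|_{H^{1/2}}^2=\sum_n |n|\,|F_n(z_0)|^2$ with $F_n(e^{\ii\psi})=\sum_m c_{n,m}e^{\ii m\psi}$.

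Even setting aside the holomorphy issue, your direct attack on the triple integral rests on estimates that fail for general Weil--Petersson homeomorphisms. You propose to use $|\varphi(x)-\varphi(z_0)|\asymp |x-z_0|\,|\varphi'(\xi)|$ ``for some $\xi$'' together with ``H\"older-type continuity of $\log|\varphi'|$'' to absorb the near-diagonal singularity. But $\log|\varphi'|\in H^{1/2}(\m S^1)$ gives no pointwise control: $H^{1/2}$ functions on the circle need not be H\"older, continuous, or even bounded, and $|\varphi'|$ can degenerate on sets of measure zero, so no mean-value comparison of that type is available. The remaining steps (dyadic decomposition, a weak-$L^2$ bound on difference quotients, integration against $|x-z_0|^{-1}\wedge 1$) are stated as intentions rather than carried out, and it is precisely there that the difficulty of the lemma lives. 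As written, the proposal does not constitute a proof; the welding decomposition plus the Grunsky/Takhtajan--Teo coefficient bound is the mechanism that makes the argument close.
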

\begin{proof}
    Recall from the introduction the conformal welding decomposition for $\varphi \in \WP(\m S^1)$: there exist quasiconformal maps $f$ and $g$ on $\hat{\m C}$, conformal on $\m D$ and $\m D^*$, respectively, such that $\varphi=(g^{-1} \circ f)|_{\m S^1}$, which is also a direct corollary of Lemma~\ref{lem:removability is QS-invariant under composition} when $\psi$ is the identity map. Since \begin{equation*}
        u_\varphi(\cdot,z_0)=\log{\frac{f(\cdot)-f(z_0)}{\cdot-z_0}} - \log{\frac{g\circ \varphi(\cdot)-g\circ \varphi(z_0)}{\varphi(\cdot)-\varphi(z_0)}}   =: u_f (\cdot,z_0)-u_g(\varphi(\cdot),\varphi(z_0)) 
    \end{equation*}
    and $H^{1/2}(\m S^1,\m C)$ is invariant under pullback by a quasisymmetric map, we only need to show that $u_f = u_f(\cdot,z_0)$ and $u_g =u_g(\cdot,\varphi(z_0))$ belong to $H^{1/2}(\m S^1,\m C)$ for almost every $z_0 \in  \m S^1$.

    It is shown in \cite[Chap~2, Lemma~2.5]{TT06} that $u_f(z,w)=\sum_{n,m=0}^\infty c_{n,m} {z^{n} w^{m}}$ with
    \begin{equation*}
        \sum_{n,m=1}^\infty \abs{nm}|c_{n,m}|^2, \sum_{n=1}^\infty \abs{n}|c_{n,0}|^2, \sum_{m=1}^\infty \abs{m}|c_{0,m}|^2< \infty,
    \end{equation*}
    which is equivalent to the associated Grunsky operator being Hilbert--Schmidt.
     Define $F_n(e^{\ii \psi}):=\sum_{m=0}^{\infty} c_{n,m}e^{\ii m \psi}$ such that $\norm{F_n}_{H^{1/2}(\m S^1,\m C)}^2=\sum_{m=1}^\infty \abs{m}|c_{n,m}|^2$, then $$\norm{u_f(\cdot,z_0)}_{H^{1/2}(\m S^1,\m C)}^2=\sum_{n=1}^{\infty}\abs{n}\abs{F_n(z_0)}^2=:F(z_0).$$ It follows that
     \begin{equation*}
         \norm{F}_{L^1(\m S^1)}= \sum_{n=1}^{\infty}\abs{n}\norm{F_n}_{L^2(\m S^1)}^2\leq \sum_{n=1}^{\infty}\abs{n}(\norm{F_n}_{H^{1/2}(\m S^1,\m C)}^2+|c_{n,0}|^2)<\infty.
     \end{equation*}
     The case for $g$ is similar.
\end{proof}

\begin{remark}
    When $\varphi \in C^3(\m S^1)\supsetneqq \WP(\m S^1)$ is three times continuously differentiable, the proof is straightforward at every point. 
\end{remark}
\begin{remark}\label{rem: equivalent definition of WP}
    For the same reason, plus controlling the term $c_{n,m}$ when $nm=0$, we can show that if $\varphi \in \WP(\m S^1)$, then
    \begin{align}\label{eq two variant}
    &u_\varphi(e^{\ii \t},e^{\ii \psi})=\sum_{n,m=-\infty}^\infty c_{n,m} {e^{\ii (n\theta+m\psi)}},\\ \label{eq norm two variant}
        &\text{where}~\sum_{n,m=-\infty}^\infty \abs{nm}|c_{n,m}|^2,\sum_{n=-\infty}^\infty \abs{n}\abs{c_{n,0}} ^2,\sum_{m=-\infty}^\infty \abs{m}\abs{c_{0,m}}^2 
    < \infty.
    \end{align}
    It is nothing but the condition for the covariance functions of two equivalent LGFs. So we conjecture that \eqref{eq two variant} and \eqref{eq norm two variant} hold if and only if $\varphi \in \WP(\m S^1)$ without assuming $\varphi \in \QS(\m S^1)$.
\end{remark}

\section{Quasi-invariance of Gaussian fields and boundary measures}\label{section: GMC}

In this section, we show that the law of the log-correlated Gaussian field on $\m S^1$ is quasi-invariant under the pullback by $\varphi \in \QS(\m S^1)$ if and only if $\varphi$ is of the Weil--Petersson class. We then ``lift'' this characterization to that for the Gaussian multiplicative chaos on $\m S^1$.
\subsection{Preliminaries of Log-correlated Gaussian field on the unit circle}

In this subsection, we survey the definition and the basic properties of the log-correlated Gaussian field on the unit circle. This is a well-researched object, and we do not aim to be comprehensive in our introduction; we direct the reader to surveys such as \cite{LGF-survey} for further information.

The \textit{(mean-zero) log-correlated Gaussian field} on $\m S^1$ (denoted \textbf{LGF}) is a random generalized function on $\m S^1$ which has a centered Gaussian law with Cameron--Martin space $\mc H_0$ and the Cameron--Martin norm $\|\cdot\|_{\mc H_0}$ That is, 
\begin{equation}\label{eq lgf def}
    h = \sum_{n\geq 1} \xi_n h_n,
\end{equation}
where $\{h_n\}_{n\geq 1}$ is an orthonormal basis of $\mc H_0$ comprised of continuous functions and $\{\xi_n\}_{n\geq 1}$ is an i.i.d.\ sequence of standard normal random variables, is an instance of LGF on $\m S^1$. For concreteness, we can take 
\begin{equation}\label{eq trigonometric basis}
\begin{split}
    h_{2n-1}(e^{\ii \theta}) &:= \frac{e_n + f_n}{\sqrt 2}(e^{\ii \theta}) = \sqrt{\frac{2}{n}} \cos (n\theta),\quad
    h_{2n}(e^{\ii \theta}) := \frac{e_n - f_n}{\sqrt 2 \ii }(e^{\ii \theta}) = \sqrt{\frac{2}{n}} \sin (n\theta),
\end{split}\end{equation}
where $\{e_n,f_n\}_{n\geq 1}$ is the standard basis for $\mc H_0^{\m C}$ introduced in \eqref{eq:basis}. However, the definition \eqref{eq lgf def} is independent of the choice of the orthonormal basis $\{h_n\}_{n\geq 1}$ for $\mc H_0$. 

The sum \eqref{eq lgf def} can be seen to converge almost surely in $H^s(\m S^1)$ for any $s<0$. Equivalently, we may consider $h$ in terms of the centered Gaussian process $\{\langle h,f\rangle\}_{f\in \mc H_0}$ where $\langle \, , \,\rangle$ is the inner product \eqref{eq inner product} on $\mc H_0$. That is, 
\begin{equation}
    \left\langle h, \sum_{n\geq 1} c_n h_n\right\rangle := \sum_{n\geq 1} \xi_n c_n
\end{equation}
for $\sum_{n\geq 1} c_n h_n \in \mc H_0$. Note that for $f\in \mc H_0$, we have $\var(\langle h,f\rangle) = \|f\|_{\mc H_0}^2$.
More generally, we consider
\begin{equation}\label{eq lgf process def}
    (h,\rho) := \sum_{n\geq 1} \xi_n \int_{\m S^1} h_n(e^{\ii \theta})\, \dd \rho(e^{\ii \theta})
\end{equation}
with signed Borel measures $\rho$ on $\m S^1$ for which the above sum is almost surely absolutely convergent. Then, such pairings $(h,\rho)$ form a continuous version of the centered Gaussian process satisfying
\begin{equation}\label{eq lgf cov formula}
    \COV((h,\rho),(h,\tilde \rho)) = \iint_{\m S^1\times \m S^1} G(e^{\ii \theta}, e^{\ii \tilde \theta})\,\dd \rho(e^{\ii \theta})\, \dd \tilde \rho(e^{\ii \tilde \theta}),
\end{equation}
where 
\begin{equation}\label{eq lgf cov kernel}
    G(e^{\ii \theta},e^{\ii \tilde \theta}) := \sum_{n\geq 1} h_n(e^{\ii\theta}) h_n(e^{\ii \tilde \theta}) = -2\log \big|e^{\ii \theta} - e^{\ii \tilde \theta}\big|
\end{equation}
is the \textit{covariance kernel} of the LGF $h$. Note that for $f\in \mc H_0$, we have 
\begin{equation}
    \langle h,f \rangle = \left(h, \frac{(-\partial_{\theta\theta})^{1/2} f(e^{\ii\theta})}{2\pi}\,\dd\theta\right),
\end{equation}
where $(-\partial_{\theta\theta})^{1/2}$ is the linear operator from $\mc H_0$ to $L^2(\m S^1)$ which maps $\cos(n\theta)$ to $n\cos(n\theta)$ and $\sin(n\theta)$ to $n \sin (n\theta)$ for each positive integer $n$.

This definition of the LGF on $\m S^1$ is similar to that of the \textit{Gaussian free field} (GFF) on the unit disk $\m D$, which is given by the sum \eqref{eq lgf def} where $\{h_n\}_{n\geq 1}$ is chosen to be a sequence of functions on $\m D$. The pairing of GFF with signed Borel measures $\rho$ on $\m D$ is defined analogously as in \eqref{eq lgf process def}.
\begin{itemize}
    \item The \textit{zero boundary (Dirichlet) GFF} on $\m D$ is given by choosing $\{h_n\}_{n\geq 1}$ to be any orthonormal basis of $H_0^1(\m D)$. The corresponding covariance kernel is given by the Dirichlet Green's function 
    \begin{equation} \label{eq Dirichlet Green function}
        G^{\mathrm{zero}}(z,w) = -\log|(z-w)/(1-z\bar w)|.
    \end{equation}
    We use $\G$ to denote a zero boundary GFF.
    
    \item We obtain the \textit{harmonic Gaussian field} on $\m D$ by choosing $\{h_n\}_{n\geq 1}$ to be any orthonormal basis of $\mc D_0$. The covariance kernel is given by 
    \begin{equation} \label{eq harmonic Green function}
        G(z,w) = -2\log |1-z\bar w|,
    \end{equation}
    which agrees with \eqref{eq lgf cov kernel} for $z,w\in \m S^1$. By the isomorphism between $\mc D_0$ and $\mc H_0$ given in Section~\ref{section:QS and WP}, we can identify it as the ``harmonic extension'' of an LGF to $\m D$. We will use $h$ to denote both an LGF on $\m S^1$ and its harmonic extension on $\m D$.

    \item The \textit{free boundary (Neumann) GFF on} $\m D$ (with mean zero on $\m S^1$) is obtained by choosing $\{h_n\}_{n\geq 1}$ to be any orthonormal basis of $H^1(\m D)/\m R$. The corresponding covariance kernel is given by the Neumann Green's function 
    \begin{equation}\label{eq Neumann Green function}
        G^{\mathrm{free}}(z,w) = -\log|(z-w)(1-z\bar w)|.
    \end{equation}
    We use $\overline \G$ to denote a free boundary GFF. 
\end{itemize}

\begin{remark}\label{rem:GFF decomposition}
    By the decomposition \eqref{eq Hilbert decomposition}, if $\G$ and $h$ are independent zero boundary GFF and harmonic Gaussian field on $\m D$, respectively, then $\G + h$ has the law of a free boundary GFF on $\m D$. This can also be seen from the identify $G^{\mathrm{zero}} + G = G^{\mathrm{free}}$ satisfied by the covariance kernels \eqref{eq Dirichlet Green function}, \eqref{eq harmonic Green function}, and \eqref{eq Neumann Green function}. We emphasize that the multiplicative factor of 2 in \eqref{eq harmonic Green function} (hence also in \eqref{eq lgf cov kernel}) is necessary for this relationship to hold.
\end{remark}
\subsection{Quasi-invariance of Log-correlated Gaussian field}
In this subsection, our goal is to culminate in the quasi-invariance result for pullbacks with respect to Weil-Petersson homeomorphisms (Theorem \ref{thm:main_1}).

Given $\varphi \in \Homeo_+(\m S^1)$, we define the pullback of an LGF $h = \sum_{n\geq 1} \xi_n h_n$ formally as 
\begin{equation}\label{eq def pullback lgf sum}
    h \circ \varphi: = \sum_{n\geq 1} \xi_n (h_n \circ \varphi).
\end{equation}
This can be rigorously considered as the centered Gaussian process 
\begin{equation}\label{eq def pullback lgf process}
    (h\circ \varphi, \rho) := (h, \varphi_*\rho)
\end{equation}
indexed by signed Borel measures $\rho$ on $\m S^1$ for which the right-hand side of \eqref{eq def pullback lgf process} is well-defined. Here, $\varphi_* \rho$ is the pushforward of $\rho$ under the homeomorphism $\varphi$. Note that the covariance kernel of $h\circ \varphi$ is 
\begin{equation}
    G_\varphi(e^{\ii \theta},e^{\ii \tilde \theta} ) := G(e^{\ii \theta}, e^{\ii \tilde \theta}) = -2\log\big| \varphi(e^{\ii \theta}) - \varphi(e^{\ii \tilde \theta}) \big|
\end{equation}
since
\begin{equation*}\begin{split}
    \COV\big((h\circ\varphi,\rho),(h\circ \varphi, \tilde \rho)\big) & =\COV\big((h,\varphi_* \rho),(h,\varphi_* \tilde \rho)\big)\\
    &= \iint_{\m S^1 \times \m S^1} G(e^{\ii \theta},e^{\ii \tilde \theta} )\, d(\varphi_* \rho)(e^{\ii \theta})\, \dd (\varphi_* \tilde \rho)(e^{\ii \tilde \theta})\\
    &= \iint_{\m S^1 \times \m S^1} G(e^{\ii \theta}, e^{\ii \tilde \theta})\, d\rho(e^{\ii \theta})\, \dd  \tilde \rho(e^{\ii \tilde \theta}).
    \end{split}
\end{equation*}

If $\varphi \in \QS(\m S^1)$, then we define the mean-zero part of the pullback $h\circ \varphi$ of the LGF $h = \sum_{n\geq 1} \xi_n h_n$ formally as
\begin{equation}\label{eq pullback field}
    \Pi(\varphi)(h) = \pi_0(h \circ \varphi) := \sum_{n\geq 1} \xi_n \Pi(\varphi)(h_n).
\end{equation}
For quasisymmetric $\varphi$, since $\Pi(\varphi)$ is a bounded linear operator on $\mc H_0$, the \eqref{eq pullback field} defines $\Pi(\varphi)(h)$ as a centered Gaussian field on $\m S^1$ with the Cameron--Martin space $\mc H_0$ and the \textit{covariance operator} $\Pi(\varphi)\Pi(\varphi)^*$. That is, 
$\{\langle \Pi(\varphi)(h),f\rangle = \sum_{n\geq 1} \xi_n \langle \Pi(\varphi)(h_n),f\rangle\}_{f\in \mc H_0}$ is a centered Gaussian process with 
\begin{equation}
    \COV(\langle \Pi(\varphi)(h),f\rangle,\langle \Pi(\varphi)(h),g\rangle) = \langle \Pi(\varphi)^*f,\Pi(\varphi)^*g\rangle = \langle f , \Pi(\varphi)\Pi(\varphi)^* g\rangle.
\end{equation}

\begin{remark}\label{remark mean-zero part}
    The relationship between $h\circ \varphi$ and $\Pi(\varphi)(h)$ is the following: if 
    \begin{equation}\label{eq pullback existence}
        \iint_{\m S^1 \times \m S^1} G_\varphi(e^{\ii \theta},e^{\ii \tilde \theta} ) \, \dd \t \, \dd \tilde\t < \infty
    \end{equation}
    so that $\fint_{\m S^1} h \circ \varphi := (h \circ \varphi, (2\pi)^{-1} \, \dd \theta)$ is an a.s.\ finite random variable, then 
    \begin{equation}
        (\Pi(\varphi)(h),\rho)  = (h\circ \varphi, \rho) - \rho(\m S^1) \fint_{\m S^1} h\circ \varphi
    \end{equation}
    a.s.\ for signed Borel measures $\rho$ on $\m S^1$ for which $\iint_{\m S^1\times \m S^1} |G_\varphi(e^{\ii \theta},e^{\ii \tilde \theta} ) |\, \dd\rho(e^{\ii \t})\,\dd\rho(e^{\ii \tilde \t})<\infty$. This can be seen directly from the decompositions \eqref{eq def pullback lgf sum} and \eqref{eq pullback field} for $h\circ \varphi$ and $\Pi(\varphi)(h)$, respectively. 
    
    One sufficient condition for \eqref{eq pullback existence} is for $\varphi$ to be a diffeomorphism, since then 
    \begin{equation}\label{eq log difference}
        u_\varphi(e^{\ii\t},e^{\ii\tilde\t}) := \log \abs{ \frac{\varphi(e^{\ii \t})-\varphi(e^{\ii\tilde\t})}{e^{\ii\t}-e^{\ii\tilde\t}} }
    \end{equation}
    is a continuous function on $\m S^1 \times \m S^1$ with $u_\varphi(e^{\ii \t},e^{\ii\t}) = \log|\varphi'(e^{\ii \t})|$. Hence, $G_\varphi = G - 2u_\varphi$ is integrable with respect to the arc-length measure on $\m S^1 \times \m S^1$. 
    
    More generally, it suffices for $\varphi^{-1}$ to be absolutely continuous with respect to the arc-length measure and satisfy $|(\varphi^{-1})'| \in H^{-1/2}(\m S^1)$. Then,
    \begin{equation}\label{eq constant term}
    \begin{split}
            \iint_{\m S^1 \times \m S^1} G_\varphi(e^{\ii\t},e^{\ii\tilde\t})\,\dd\t\,\dd\tilde\t &= \iint_{\m S^1 \times \m S^1} \left(\sum_{n\geq 1}(h_n\circ\varphi)(e^{\ii\t})\,(h_n\circ\varphi)(e^{\ii\tilde\t})\right) \dd\t \, \dd \tilde \t \\
            &= \sum_{n\geq 1} \left| \int_{\m S^1} h_n(\varphi(e^{\ii \t}))\,\dd\t \right|^2 = \sum_{n\geq 1} \left| \int_{\m S^1} h_n(e^{\ii \t})\,|(\varphi^{-1})'|(e^{\ii\t})\,\dd\t \right|^2 \\
            &= \||(\varphi^{-1})'|\|_{H^{-1/2}(\m S^1)/\m R}^2 < \infty.
            \end{split}
    \end{equation}
    In particular, if $\varphi \in \WP(\m S^1)$, then $|(\varphi^{-1})'| \in L^2(\m S^1)$; this follows from the standard argument using the VMO space that $\log|\varphi'| \in H^{1/2}(\m S^1) $ implies $\abs{\varphi'}^p = \exp(p\log\abs{\varphi'}) \in L^1(\m S^1)$ for any $p\geq 1$. Hence, we have \eqref{eq pullback existence} in this case.
\end{remark}

We now give our first main result, which identifies $\WP(\m S^1)$ as the class of quasisymmetric circle homeomorphisms $\varphi$ for which $\Pi(\varphi)(h)$ and $h$ have equivalent laws.

\begin{thm}\label{thm:main_1}
    Suppose $\varphi \in \QS(\m S^1)$ and $f$ is a fixed real-valued function on $\m S^1$. Let $\Pi(\varphi)(h)$ be the Gaussian field given in \eqref{eq pullback field}, where $h$ is an LGF on $\m S^1$. Then, the law of the random field $\Pi(\varphi)(h) + f$ is equivalent to that of an LGF on $\m S^1$ if and only if $\varphi \in \WP(\m S^1)$ and $f\in \mc H_0$. Otherwise, the two laws are mutually singular.
\end{thm}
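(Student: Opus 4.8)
The plan is to reduce everything to the Feldman--H\'ajek dichotomy for Gaussian measures on a separable Hilbert space, applied on the Cameron--Martin space $\mc H_0$. First I would set up the two Gaussian laws to compare: the law $\mu$ of an LGF $h$, which is the centered Gaussian with covariance operator $I$ on $\mc H_0$ (realized on $H^s(\m S^1)$ for $s<0$), and the law $\nu$ of $\Pi(\varphi)(h)+f$, which is Gaussian with mean $f$ and covariance operator $C_\varphi := \Pi(\varphi)\Pi(\varphi)^*$. The Feldman--H\'ajek theorem states that two Gaussian measures on a locally convex space are either equivalent or mutually singular, and equivalence holds if and only if (i) their Cameron--Martin spaces coincide as sets with equivalent norms, (ii) the mean difference lies in the common Cameron--Martin space, and (iii) the operator $C_1^{-1/2}C_2 C_1^{-1/2} - I$ is Hilbert--Schmidt. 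In our situation $C_1 = I$, so condition (iii) becomes precisely the requirement that $C_\varphi - I = \Pi(\varphi)\Pi(\varphi)^* - I$ be Hilbert--Schmidt on $\mc H_0$, and condition (ii) becomes $f \in \mc H_0$ (since the Cameron--Martin space of $\mu$ is $\mc H_0$ itself). Condition (i) is automatic once we know $\Pi(\varphi)$ and $\Pi(\varphi)^{-1} = \Pi(\varphi^{-1})$ are bounded, which holds for all $\varphi \in \QS(\m S^1)$ by Nag--Sullivan: the Cameron--Martin space of $\nu$ is $\Pi(\varphi)(\mc H_0) = \mc H_0$ with an equivalent norm.

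The core analytic input is Lemma~\ref{lem properties of Pi_var}, which says exactly that $\Pi(\varphi)\Pi(\varphi)^* - I \in \HS(\mc H_0)$ if and only if $\varphi \in \WP(\m S^1)$. So the chain of equivalences reads: the laws of $\Pi(\varphi)(h)+f$ and of an LGF are equivalent $\iff$ [$\Pi(\varphi)\Pi(\varphi)^*-I$ is Hilbert--Schmidt] and [$f \in \mc H_0$] $\iff$ $\varphi \in \WP(\m S^1)$ and $f \in \mc H_0$. When either condition fails, the dichotomy part of Feldman--H\'ajek forces mutual singularity. One subtlety to address carefully: Feldman--H\'ajek is usually stated for Gaussian measures on a separable Banach (or Hilbert) space, whereas our fields live on $H^s(\m S^1)$ for $s<0$; I would either invoke the version of the theorem valid on separable Fr\'echet/locally convex spaces, or note that both $h$ and $\Pi(\varphi)(h)+f$ are supported on a common separable Hilbert space $H^s(\m S^1)$ for fixed $s<0$, and that the abstract Wiener space structure $(\mc H_0, H^s(\m S^1))$ identifies the Cameron--Martin space intrinsically, so the three conditions can be checked at the level of $\mc H_0$ regardless of the ambient space. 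I should also verify that the constant ambiguity is harmless: since we work with the mean-zero field and $\mc H_0$ consists of mean-zero functions, a deterministic shift by $f$ with $f \in \mc H_0$ keeps us in the right class, and a shift with $f \notin \mc H_0$ (in particular, with nonzero constant part, or with $\pi_0 f \notin H^{1/2}$) produces a measure singular to the LGF law by condition (ii).

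The main obstacle is not the Feldman--H\'ajek machinery, which is black-box, but ensuring that all three hypotheses are genuinely matched to the single condition ``$\varphi \in \WP(\m S^1)$ and $f \in \mc H_0$'' with no hidden gap — in particular confirming that boundedness of $\Pi(\varphi)$ together with Hilbert--Schmidtness of $\Pi(\varphi)\Pi(\varphi)^*-I$ really does give norm-equivalence of Cameron--Martin spaces (condition (i)) and that there is no extra determinant/trace-class condition lurking. Here the identity $\Pi(\varphi)\Pi(\varphi)^*-I = 2\Pi(\varphi)\bigl(\begin{smallmatrix} & N^t \\ N^* & \end{smallmatrix}\bigr)$ from \eqref{eq hs calculation} is what makes condition (iii) clean: Hilbert--Schmidtness of $C_\varphi - I$ is symmetric in $\varphi$ and $\varphi^{-1}$ (both boundedness statements are available), so there is no asymmetry to worry about, and the ``only if'' direction of Lemma~\ref{lem properties of Pi_var} supplies the converse. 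Finally I would remark that the necessity of the factor $2$ in the covariance kernel (Remark~\ref{rem:GFF decomposition}) plays no role here since we are comparing two fields with the \emph{same} normalization; the statement is genuinely about the operator $\Pi(\varphi)$ and the vector $f$.
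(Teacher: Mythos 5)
Your proposal is correct and follows essentially the same route as the paper: the paper's proof is a direct application of the Feldman--H\'ajek dichotomy (citing the same three conditions on the Cameron--Martin space, the mean shift, and the covariance difference) combined with Lemma~\ref{lem properties of Pi_var} to identify Hilbert--Schmidtness of $\Pi(\varphi)\Pi(\varphi)^*-I$ with $\varphi\in\WP(\m S^1)$. Your additional remarks on the ambient space $H^s(\m S^1)$ and on the norm-equivalence of Cameron--Martin spaces for quasisymmetric $\varphi$ are correct elaborations of points the paper leaves implicit.
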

\begin{proof}
    The Feldman--H\'ajek theorem states that the laws of $h$ and $\Pi(\varphi)(h) + f$ are either equivalent or mutually singular, and the former holds if and only if 
    \begin{itemize}
        \item The Cameron--Martin space for the law of $\Pi(\varphi)(h) + f$ is $\mc H_0$;
        \item The difference in the means, $f$, lies in the common Cameron--Martin space $\mc H_0$;
        \item The difference in the covariance operators, $\Pi(\varphi)\Pi(\varphi)^* - I$, is a Hilbert--Schmidt operator on the common Cameron--Martin space $\mc H_0$.
    \end{itemize}
    See, e.g., \cite[Thm.\ 2.25]{DPZ-sde-infinite-dim}. By Lemma \ref{lem properties of Pi_var}, these conditions are satisfied if and only if $\varphi \in \WP(\m S^1)$ and $f\in \mc H_0$.
    
    Recalling from Definition \ref{df:WP} that $\varphi \in \Homeo_+(\m S^1)$ is in the Weil--Petersson class if and only if $\log |\varphi'| \in H^{1/2}(\m S^1)$, we immediately obtain the following corollary from Theorem~\ref{thm:main_1}.
\end{proof}
\begin{cor}
    Let $\varphi \in \QS(\m S^1)$ and $Q$ be a real constant. Furthermore, let $u_\varphi$ be the function on $\m S^1 \times \m S^1$ given in \eqref{eq log difference} and let $z_0 \in \m S^1$, $\alpha \in \m R$ be fixed. If $h$ is an LGF on $\m S^1$, then the law of the random field 
\begin{equation*}
    \Pi(\varphi)(h)+\pi_0\left(Q\log\abs{\varphi'}+\alpha u_\varphi(\cdot,z_0)\right)
\end{equation*} is equivalent to that of $h$ if and only if $\varphi \in \WP(\m S^1)$ and either of the following holds: $\alpha=0$ or $u_\varphi(\cdot,z_0)\in H^{1/2}(\m S^1)$. Otherwise, the two laws are mutually singular.
\end{cor}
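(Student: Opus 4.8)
The plan is to obtain this as an immediate specialization of Theorem~\ref{thm:main_1}, taking the deterministic additive term to be
\[
  f \;:=\; \pi_0\bigl(Q\log|\varphi'| + \alpha\, u_\varphi(\cdot,z_0)\bigr).
\]
This $f$ is a fixed real-valued function on $\m S^1$: $\log|\varphi'|$ is real, and $u_\varphi(\cdot,z_0)$ as defined in \eqref{eq log difference} is the logarithm of a \emph{modulus}, hence real as well. Theorem~\ref{thm:main_1} then applies directly: the law of $\Pi(\varphi)(h)+f$ is equivalent to that of an LGF on $\m S^1$ (equivalently, to the law of $h$) precisely when $\varphi\in\WP(\m S^1)$ and $f\in\mc H_0$, and the two laws are mutually singular otherwise. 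Hence the only thing left to check is how the membership $f\in\mc H_0$ unpacks.

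First I would observe that $f$ lies in the range of the mean-zero projection $\pi_0$, so it is automatically mean-zero; since $\pi_0$ maps $H^{1/2}(\m S^1)$ onto $\mc H_0$ with kernel consisting of the constants, we get $f\in\mc H_0$ if and only if $Q\log|\varphi'| + \alpha\, u_\varphi(\cdot,z_0)\in H^{1/2}(\m S^1)$. Now restrict to $\varphi\in\WP(\m S^1)$. By Definition~\ref{df:WP}, $\log|\varphi'|\in H^{1/2}(\m S^1)$, and because $H^{1/2}(\m S^1)$ is a vector space, the displayed function belongs to $H^{1/2}(\m S^1)$ if and only if $\alpha\, u_\varphi(\cdot,z_0)\in H^{1/2}(\m S^1)$, i.e.\ if and only if $\alpha=0$ or $u_\varphi(\cdot,z_0)\in H^{1/2}(\m S^1)$. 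This is exactly the condition stated in the corollary; combined with the mutual singularity clause for $\varphi\in\QS(\m S^1)\setminus\WP(\m S^1)$ furnished by Theorem~\ref{thm:main_1}, the proof is complete.

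I do not expect a genuine obstacle, since the statement is a reformulation of Theorem~\ref{thm:main_1}; only two minor points warrant a remark. First, when $\varphi\in\QS(\m S^1)\setminus\WP(\m S^1)$ the mutual singularity is caused entirely by $\Pi(\varphi)\Pi(\varphi)^*-I$ failing to be Hilbert--Schmidt (Lemma~\ref{lem properties of Pi_var}) and is therefore insensitive to the additive term, so no care is needed about whether $\log|\varphi'|$ is well defined in that regime. Second, Lemma~\ref{lem log difference} shows that $u_\varphi(\cdot,z_0)\in H^{1/2}(\m S^1,\m C)$ for almost every $z_0$ when $\varphi\in\WP(\m S^1)$; taking real parts (a bounded operation on $H^{1/2}$) this gives $u_\varphi(\cdot,z_0)\in H^{1/2}(\m S^1)$ for a.e.\ $z_0$, so the hypothesis ``$\alpha=0$ or $u_\varphi(\cdot,z_0)\in H^{1/2}(\m S^1)$'' is automatic for a.e.\ $z_0$ and is included only to handle the exceptional points.
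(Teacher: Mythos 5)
Your proposal is correct and matches the paper exactly: the paper treats this as an immediate consequence of Theorem~\ref{thm:main_1} together with the characterization $\varphi\in\WP(\m S^1)\Leftrightarrow\log|\varphi'|\in H^{1/2}(\m S^1)$ from Definition~\ref{df:WP}, which is precisely the specialization $f=\pi_0(Q\log|\varphi'|+\alpha\,u_\varphi(\cdot,z_0))$ that you carry out. Your two closing remarks (singularity off $\WP(\m S^1)$ being driven by the covariance operator alone, and the a.e.\ validity of the $H^{1/2}$ condition via Lemma~\ref{lem log difference}) are accurate elaborations of points the paper leaves implicit.
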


\subsection{Preliminaries of Gaussian multiplicative chaos}
Now we shift our attention to the Gaussian multiplicative chaos (GMC) measure on the unit circle. Given $\gamma \in (0,2)$, the $\gamma$-GMC measure with respect to an LGF $h$ on $\m S^1$ is defined formally as 
\begin{equation}\label{eq gmc formal}
    \mc M_h^\gamma(\dd\theta) = e^{\frac{\gamma}{2}h(e^{\ii \theta}) - \frac{\gamma^2}{8}\m E[h(e^{\ii \theta})^2]}\,\dd\theta.
\end{equation}
We omit the superscript $\gamma$ and write $\mc M_h = \mc M_h^\gamma$ if there is no confusion.
A rigorous study of GMC was initiated by Kahane \cite{Kahane} with seminal contributions by Robert and Vargas \cite{rv-gmc}, Duplantier and Sheffield \cite{DS11_LQG_KPZ}, and Shamov \cite{Shamov-GMC}. 
The GMC measure $\mc M_h$ is defined rigorously via renormalization. The following two equivalent approaches almost surely give the same measure as shown in \cite{berestycki}.
\begin{itemize}
    \item Let $\{h_k\}_{k\geq 1}$ be a sequence of continuous functions on $\m S^1$ forming an orthonormal basis for $\mc H_0$. Given an LGF $h = \sum_{k\geq 1} \xi_k h_k$, we define $\mc M_h^\gamma$ as the almost sure weak limit of the measures
    \begin{equation}\label{eq gmc def basis}
        \exp\left( \frac{\gamma}{2} \sum_{k=1}^n \xi_k h_k(e^{\ii \theta}) - \frac{\gamma^2}{8} \sum_{k=1}^n \big(h_k(e^{\ii \theta})\big)^2 \right) \dd \theta
    \end{equation}
    as $n\to \infty$.

    \item Let $\sigma$ be a fixed nonnegative Radon measure on the interval $(-\pi,\pi)$ with unit mass and $\sup_{x\in (-\pi,\pi)} \int_{-\pi}^\pi \log_+(1/ |x-y|) \,\sigma(\dd y) < \infty$. For $e^{\ii \theta} \in \m S^1$ and $\vare \in (0,1)$, define $\sigma_{e^{\ii \theta},\vare} $ to be the pushforward of $\sigma$ under the map $x \mapsto e^{\ii (\theta + \vare x)}$, and denote $h_\vare(e^{\ii \theta}) := (h, \sigma_{e^{\ii \theta},\vare})$. Then, we define $\mc M_h^\gamma$ as the weak limit as $\vare \to 0$ of 
    \begin{equation} \label{eq gmc def mollify}
        \exp \left( \frac{\gamma}{2} h_\vare(e^{\ii \theta}) + \frac{\gamma^2}{4} \iint_{\m S^1 \times \m S^1} \log |x-y| \,  \sigma_{e^{\ii \theta},\vare}(\dd x)\, \sigma_{e^{\ii \theta},\vare}(\dd y) \right)\dd \theta
    \end{equation}
    in probability.

    For instance, we can choose $\sigma$ to be the uniform probability measure on $[-1,1]$, in which case $h_\vare(e^{\ii \theta})$ will be average value of the LGF $h$ on the closed interval from $e^{\ii (\theta - \vare)}$ to $e^{\ii (\theta + \vare)}$ on $\m S^1$. However, the limit \eqref{eq gmc def mollify} does not depend on the choice of the measure $\sigma$.
\end{itemize}

Since the GMC measure $\mc M_h$ is almost surely determined by the LGF $h$, we can define the measure $\mc M_h$ using the limits \eqref{eq gmc def basis} or \eqref{eq gmc def mollify} when $h$ is a random field on $\m S^1$ whose law is absolutely continuous with respect to LGF. When $h$ is a random field on $\m S^1$ with a decomposition $h = c + \tilde h$ where $c$ is a real-valued random variable and the law of $\tilde h$ is absolutely continuous with respect to LGF, then we define 
\begin{equation}
    \mc M_h^\gamma := e^{(\gamma/2)c} \mc M_{\tilde h}^\gamma.
\end{equation}

Conversely, the GMC measure $\mc M_h^\gamma$ almost surely determines the LGF $h$ \cite[Thm.~1.4]{bss-equiv-gmc}.

\begin{remark}\label{remark critical gmc limit}
When $\gamma=2$, the limits \eqref{eq gmc def basis} and \eqref{eq gmc def mollify} give the zero measure almost surely. However, a slightly modified limit gives a nontrivial measure $\mc M_h^{\mathrm{crit}}$ called the  \textit{critical Gaussian multiplicative chaos} \cite{DRSV-critical1,DRSV-critical2}. 
As described in \cite[Section~4.1.2]{APS-crit-gmc} (also see \cite{Powell-crit-review} and \cite{PS-quantum-length}), we can also obtain the critical GMC measure through the weak limit $\frac{1}{2(2-\gamma)}\mc M_h^{\gamma} \to \mc M_h^{\mathrm{crit}}$ in probability as $\gamma\to 2^{-}$. We also have that the critical GMC measure $\mc M_h^{\mathrm{crit}}$ almost surely determines the LGF $h$ \cite{vihko-equiv-gmc}. 
\end{remark}

\begin{remark}
Our multiplicative factor of $\gamma/2$ in \eqref{eq gmc def basis}--\eqref{eq gmc def mollify} as well as the factor of 2 in \eqref{eq lgf cov kernel} differs from some of the recent works on the GMC on the circle, such as \cite{CN-gmc-cbe,GV-gmc-fourier}.
In particular, they are chosen to agree with the literature on Liouville quantum gravity \cite{DS11_LQG_KPZ}. In the LQG theory, the quantum boundary length of a two-dimensional domain $D$ is defined as the GMC measure on $\partial D$ with respect to the free boundary GFF on $D$. More precisely, assume $D \subset \m H$ and $\partial D \cap \m R$ is a nonempty interval. Given free boundary GFF $\overline \Gamma = \Gamma + h$ where $\Gamma$ is a zero boundary GFF and $h$ is an independent harmonic Gaussian field $h$ on $D$ (recall Remark \ref{rem:GFF decomposition}), we define the $\gamma$-LQG boundary length $\nu_{\overline \G}$ on $\partial D \cap \m R$ as the almost sure weak limit 
\begin{equation}
    \lim_{\vare \to 0} \vare^{\gamma^2/4} e^{\frac{\gamma}{2} \overline \G_\vare(x)}\, \dd x,
\end{equation}
where $\overline \Gamma_\vare(x)$ is the average value of $\overline \G$ on the semicircle $\partial B_\vare(x) \cap \m H$. By the equivalence between the GMC defined using (semi)circle averages and the Karhunen--Lo\`eve expansion of GFF as explained in \cite{berestycki}, we have that 
\begin{equation}
    \lim_{\vare \to 0} e^{\frac{\gamma}{2} \overline \G_\vare(x) - \frac{\gamma^2}{8}\m E[\overline \G_\vare(x)^2]}\, \dd x = \lim_{\vare \to 0} e^{\frac{\gamma}{2} h_\vare(x) - \frac{\gamma^2}{8}\m E[h_\vare(x)^2]}\, \dd x
\end{equation}
and thus
\begin{equation}\label{eq gmc 1d 2d difference}
    \nu_{\overline \Gamma} = \lim_{\vare \to 0} \vare^{\gamma^2/4} e^{\frac{\gamma}{2} h_\vare(x) - \frac{\gamma^2}{8}\m E[\G_\vare(x)^2]}\, \dd x = e^{-(\gamma^2/8)C} \nu_h.
\end{equation}
Here, 
\begin{equation}\begin{split}
    C = \lim_{\vare \to 0} \m E[\G_\vare(x)^2] &= \lim_{\vare \to 0} \frac{1}{\pi^2}\iint_{[0,\pi]^2} \big( -\log |\vare e^{\ii \theta} - \vare e^{\ii \tilde \theta}| + \log |\vare e^{\ii \t} - \vare e^{-\ii \tilde \t}| \big) \dd \t \, \dd \tilde \t  \\ 
    &= \frac{2}{\pi^2}\iint_{[0,\pi]^2}\log | e^{\ii \t} -  e^{-\ii \tilde \t}| \, \dd \t \, \dd \tilde \t = -\frac{7}{\pi^2}\zeta(3)  \approx -0.85,
    \end{split}
\end{equation}
where $\zeta(3)$ is Ap\'ery's constant.
By the isomorphism between $\mc H_0$ and the harmonic extension $\mc D_0$, the relation \eqref{eq gmc 1d 2d difference} holds with $\nu_h$, the boundary LQG measure on $\partial \m D$ defined using the harmonic field $h$ in the unit disk, replaced by $\mc M_h$, the GMC measure on $\m S^1$ defined using the LGF $h$ on the unit circle. In particular, if $\overline \G$ is a free boundary GFF on $\m D$ and $h$ is its trace on $\m S^1$ (equivalently, the harmonic part of $\overline \G$ on $\m D$), then the normalized measures $(\nu_{\overline \G}(\m S^1))^{-1} \nu_{\overline \G}$ and $(\mc M_h(\m S^1))^{-1} \mc M_h$ agree almost surely.
\end{remark}
\subsection{Quasi-invariance of Gaussian multiplicative chaos}
Given an LGF $h$ on $\m S^1$, define the corresponding \textit{normalized GMC measure} as 
\begin{equation}\label{eq normalized gmc}
    \widehat{\mc M}_h^\g := \frac{1}{\mc M_h(\m S^1)} \mc M_h^\g.
\end{equation}
Our goal for this subsection is to show the following quasi-invariance result for normalized GMC measures.
\begin{prop}\label{prop:quasi-invariance of weighted Liouville measure}
Suppose $\varphi \in \Homeo_+(\m S^1)$.
    Let $h$ be the LGF on $\m S^1$ and $\widehat{\mc M}_h=\widehat{\mc M}_h^\g$ be the corresponding normalized GMC measure \eqref{eq normalized gmc} for $\gamma \in (0,2]$.  Then, the law of the pullback $\varphi^* \widehat{\mc M}_h$ is absolutely continuous with respect to the law of $\widehat{\mc M}_h$ if and only if $\varphi \in \WP(\m S^1)$. Moreover, if $\varphi \in \WP(\m S^1)$, then we almost surely have
    \begin{equation}\label{eq:coordinate-change}
        \varphi^* \mc M_h ^{\g} = \mc M_{h\circ \varphi+Q\log\abs{\varphi'}} ^{\g}
    \end{equation}
    where $Q = \frac{2}{\g} + \frac{\gamma}{2}$.
\end{prop}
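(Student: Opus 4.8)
The plan is to establish the coordinate change formula \eqref{eq:coordinate-change} first, and then deduce the quasi-invariance statement from it combined with Theorem \ref{thm:main_1}. For the coordinate change formula, I would proceed in two stages. First, when $\varphi$ is a diffeomorphism of $\m S^1$, the identity $\varphi^* \mc M_h^\g = \mc M_{h\circ\varphi + Q\log|\varphi'|}^\g$ should follow by adapting the standard argument of Duplantier--Sheffield \cite{DS11_LQG_KPZ} to the one-dimensional setting: one regularizes $h$ by mollification (say via the $\sigma_{e^{\ii\theta},\vare}$ construction from \eqref{eq gmc def mollify}), tracks how the Gaussian-field average and the Wick-renormalization constant transform under the change of variables, and checks that the excess renormalization produces exactly the factor $e^{\frac\g2 Q \log|\varphi'|} = |\varphi'|^{\g Q/2}$; here the exponent $Q = \frac2\g + \frac\g2$ arises precisely because the renormalization constant scales like the logarithm of the local dilation of $\varphi$ with a coefficient matching $\frac\g2 \cdot \frac2\g + \frac{\g^2}{8}\cdot 2 \cdot \frac{?}{}$ — I would pin this down carefully in the routine computation, but it is exactly the $1$D analogue of \cite[Prop.~2.1]{DS11_LQG_KPZ}. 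This is stated as Lemma~\ref{lem:diff-coord-change} and I would invoke it here.

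Second, for a general $\varphi \in \WP(\m S^1)$, I would approximate $\varphi$ by diffeomorphisms $\varphi_n$ using Lemma \ref{lem:wp-approx}, which gives $\varphi_n \to \varphi$ uniformly, $\log|(\varphi_n\circ\varphi^{-1})'| \to 0$ in $H^{1/2}(\m S^1)$, and $\Pi(\varphi_n)u \to \Pi(\varphi)u$ in $H^{1/2}$. The key analytic input is a GMC convergence lemma (Lemma~\ref{lem:gmc-convergence}): along this approximation, $\mc M_{h\circ\varphi_n + Q\log|\varphi_n'|}^\g \to \mc M_{h\circ\varphi + Q\log|\varphi'|}^\g$ weakly, almost surely or in probability. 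To get this, I would write $h\circ\varphi_n + Q\log|\varphi_n'|$ as a perturbation of $h\circ\varphi + Q\log|\varphi'|$ by a field whose $H^{1/2}$ norm tends to $0$, use the Cameron--Martin structure (the perturbing field lies in $\mc H_0$ with small norm, by Lemma~\ref{lem log difference}-type estimates controlling $\log|(\varphi_n\circ\varphi^{-1})'|$ and the operator convergence of $\Pi(\varphi_n)$), and invoke standard GMC stability under Cameron--Martin shifts together with an $L^1$ or $L^p$ (for $p<4/\g^2$) bound on the total mass to upgrade to weak convergence of measures. Simultaneously, $\varphi_n^* \mc M_h^\g \to \varphi^* \mc M_h^\g$ weakly by continuity of pullback under uniform convergence of homeomorphisms (since $\mc M_h$ is a.s.\ atomless). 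Passing to the limit in the diffeomorphism identity then yields \eqref{eq:coordinate-change}.

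For the quasi-invariance dichotomy: if $\varphi \in \WP(\m S^1)$, then by \eqref{eq:coordinate-change} we have $\varphi^*\widehat{\mc M}_h = \widehat{\mc M}_{h\circ\varphi + Q\log|\varphi'|}^\g$ (the normalization cancels the deterministic pieces that are not mean-zero), and by the Corollary to Theorem~\ref{thm:main_1} the field $\Pi(\varphi)(h) + \pi_0(Q\log|\varphi'|)$ has law equivalent to that of $h$ since $\log|\varphi'|\in H^{1/2}(\m S^1)$; pushing this equivalence through the (measurable, since GMC is a measurable functional of the field) map $h \mapsto \widehat{\mc M}_h$ gives absolute continuity of $\varphi^*\widehat{\mc M}_h$ with respect to $\widehat{\mc M}_h$. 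Conversely, if $\varphi \in \QS(\m S^1)\setminus\WP(\m S^1)$, or more generally $\varphi \in \Homeo_+(\m S^1)$, suppose for contradiction that the laws are absolutely continuous. Since the normalized GMC measure a.s.\ determines the LGF \cite{bss-equiv-gmc} (and \cite{vihko-equiv-gmc} for $\g=2$) — note the normalization does not destroy this, as the determination is up to an additive constant which the normalization fixes — the map $h \mapsto \widehat{\mc M}_h$ is a.s.\ invertible, so absolute continuity of the GMC laws forces absolute continuity of the law of the field $\pi_0(h\circ\varphi)$ (plus a deterministic part) with respect to that of $h$; but by Theorem~\ref{thm:main_1} this can only happen when $\varphi\in\WP(\m S^1)$ (and first one must argue $\varphi \in \QS(\m S^1)$ is necessary for $\Pi(\varphi)$ to even be bounded, using the Nag--Sullivan characterization, handling the non-quasisymmetric case separately by a singularity-of-covariance argument). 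I expect the main obstacle to be the GMC convergence lemma under the diffeomorphism approximation — specifically, obtaining uniform moment bounds on the total masses $\mc M_{h\circ\varphi_n + Q\log|\varphi_n'|}^\g(\m S^1)$ along the approximating sequence so that weak convergence of the renormalized measures can be promoted from convergence of the underlying fields; the $\g = 2$ critical case, where one must instead work with $\mc M^{\mathrm{crit}}$ and its $\g\to2^-$ limit, will require additional care.
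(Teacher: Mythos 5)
Your proposal follows the paper's proof essentially verbatim: the coordinate change formula for diffeomorphisms (Lemma \ref{lem:diff-coord-change}), approximation in the Weil--Petersson topology (Lemma \ref{lem:wp-approx}) combined with continuity of GMC with respect to the field (Lemma \ref{lem:gmc-convergence}) to reach general $\varphi\in\WP(\m S^1)$ and the critical case $\gamma=2$ by the $\gamma\to 2^-$ limit, the Feldman--H\'ajek dichotomy via Theorem \ref{thm:main_1} for sufficiency, and the fact that the GMC measure a.s.\ determines the LGF for necessity. The only divergence is internal to Lemma \ref{lem:gmc-convergence}, where the paper controls the limit not by moment bounds on the GMC total masses but by the martingale approximation $X_m=\m E[X\mid\mc F_m]$ together with uniform second-moment bounds on the Radon--Nikodym derivatives of the laws of the perturbed fields --- worth noting because your sketch treats the perturbation somewhat loosely as a ``Cameron--Martin shift,'' whereas the covariance operator itself changes and must be controlled in Hilbert--Schmidt norm.
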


It is well-known that given a 2D log-correlated Gaussian field $\Gamma$, the corresponding LQG area measure $\mu_\Gamma$ almost surely satisfies the conformal coordinate change rule
\begin{equation}
    \psi^* \mu_\Gamma = \mu_{\Gamma\circ \psi + Q\log |\psi'|}
\end{equation}
if $\psi$ is a conformal map and $Q = 2/\gamma + \gamma/2$ \cite[Prop.~2.1]{DS11_LQG_KPZ}. In showing \eqref{eq:coordinate-change}, we extend this result to non-smooth and even non-$C^1$ homeomorphisms $\varphi$ on $\m S^1$.
To begin our proof of Proposition \ref{prop:quasi-invariance of weighted Liouville measure}, we verify that the coordinate change formula holds when $\varphi$ is a diffeomorphism. 
\begin{lem}\label{lem:diff-coord-change}
    Let $h$ be an LGF on the unit circle $\m S^1$ and suppose $\varphi$ is an orientation-preserving $C^1$ diffeomorphism of $\m S^1$. Then, for every $\g \in (0,2]$ with $Q = \frac{2}{\g} + \frac{\gamma}{2}$, we almost surely have the coordinate change rule \eqref{eq:coordinate-change}.
\end{lem}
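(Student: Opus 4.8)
The plan is to reduce the identity \eqref{eq:coordinate-change} for a $C^1$ diffeomorphism $\varphi$ to the classical Duplantier--Sheffield coordinate change for mollified fields, working with a convenient regularization of the LGF. First I would fix the mollifier $\sigma$ to be, say, the uniform probability measure on $[-1,1]$, so that $h_\vare(e^{\ii\t})$ is the average of $h$ over the arc from $e^{\ii(\t-\vare)}$ to $e^{\ii(\t+\vare)}$, and recall from the berestycki-type statement cited after \eqref{eq gmc def mollify} that the limit \eqref{eq gmc def mollify} recovers $\mc M_h^\g$ in probability. The key point is that pushing forward the approximating measure \eqref{eq gmc def mollify} under $\varphi^{-1}$ (i.e.\ computing $\varphi^*$ of the approximation) produces a measure of the same form but built from the field $h\circ\varphi$ and with the circle-average radius $\vare$ replaced by the $\varphi$-distorted scale; since $\varphi$ is a $C^1$ diffeomorphism, near a point $e^{\ii\t}$ this distorted scale is $\vare|\varphi'(e^{\ii\t})|(1+o(1))$ uniformly in $\t$, which is exactly where the factor $Q\log|\varphi'|$ with $Q=\tfrac2\g+\tfrac\g2$ enters, the $\tfrac\g2$ coming from the Jacobian of the change of variables $\dd\theta$ and the $\tfrac2\g$ coming from the logarithmic divergence of the variance $\m E[h_\vare^2] = -2\log\vare + O(1)$ absorbed into the renormalization.

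Concretely, the steps in order would be: (i) write $\varphi^*\big(\text{RHS of \eqref{eq gmc def mollify}}\big)$ explicitly by the change of variables $\theta = \varphi^{-1}(e^{\ii s})$; (ii) identify the exponential weight as $\exp\big(\tfrac\g2 (h\circ\varphi)_{\widetilde\vare(s)}(e^{\ii s}) + \text{(renormalization)}\big)\,|(\varphi^{-1})'|(e^{\ii s})\,\dd s$, where $\widetilde\vare(s)$ is the $\varphi$-image of the $\vare$-arc around $\varphi^{-1}(e^{\ii s})$; (iii) use the $C^1$-regularity of $\varphi$ to compare $(h\circ\varphi)_{\widetilde\vare(s)}$ with a genuine $\vare$-mollification $(h\circ\varphi)_{\vare}^{\sigma_s}$ of the field $h\circ\varphi$ against an $s$-dependent but uniformly controlled mollifier, incurring an error that vanishes as $\vare\to0$ because the covariance kernel $-2\log|\cdot|$ of the LGF is slowly varying and $\varphi'$ is continuous (hence locally constant up to $o(1)$); (iv) collect the deterministic prefactors: the Jacobian contributes $|(\varphi^{-1})'|(e^{\ii s})\,\dd s = e^{\log|(\varphi^{-1})'|}\dd s$, i.e.\ $e^{-\log|\varphi'|\circ\varphi^{-1}}$, and the scale distortion contributes $\vare^{\gamma^2/4}\big/\widetilde\vare(s)^{\gamma^2/4} = |\varphi'(e^{\ii s})|^{-\gamma^2/4}(1+o(1))$ after normalizing the variance; combining, the total deterministic weight is $\exp\big((\tfrac\g2)\cdot Q\log|\varphi'|\big)$ with $Q=\tfrac2\g+\tfrac\g2$, since $1+\tfrac{\gamma^2}{4} = \tfrac\g2\cdot Q$; (v) pass to the limit $\vare\to0$ on both sides, using that convergence in probability of the approximating measures is preserved under the fixed homeomorphism $\varphi^*$ and under multiplication by the bounded continuous function $e^{(\g/2)Q\log|\varphi'|}$, to conclude $\varphi^*\mc M_h^\g = \mc M_{h\circ\varphi+Q\log|\varphi'|}^\g$; here one also uses the definition $\mc M_{h\circ\varphi + c}^\g = e^{(\g/2)c}\mc M_{h\circ\varphi}^\g$ for the deterministic function $c = Q\log|\varphi'|$, noting $h\circ\varphi$ has law mutually absolutely continuous with LGF by Theorem~\ref{thm:main_1} (with $\varphi \in \Diff_+(\m S^1)\subset\WP(\m S^1)$), so $\mc M_{h\circ\varphi}^\g$ is well defined.

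I expect the main obstacle to be step (iii): controlling the discrepancy between the naturally-arising mollification $(h\circ\varphi)_{\widetilde\vare(s)}$ at the $\varphi$-distorted scale and a clean $\vare$-mollification of $h\circ\varphi$, uniformly in $s$, and showing the resulting error field converges to $0$ in an appropriate sense so that the exponential moments behave. The clean way to handle this is to observe that both are Gaussian, compute the variance of the difference, and show it tends to $0$ uniformly — this uses only that the LGF covariance $-2\log|x-y|$ is continuous off the diagonal and has a logarithmic singularity whose coefficient is scale-independent, together with the $C^1$ bound $\sup_{\m S^1}|\varphi'|<\infty$ and $\inf_{\m S^1}|\varphi'|>0$; then a standard uniform-integrability argument (as in the classical GMC convergence proofs) upgrades $L^2$-control of the Gaussian approximations to convergence in probability of the measures. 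A secondary technical point is that the mollifier $\sigma_{e^{\ii\t},\vare}$ as defined requires the reference measure $\sigma$ to be fixed, whereas the pushed-forward mollifier is $\t$-dependent; one circumvents this by appealing to the fact, recalled in the excerpt, that the limit \eqref{eq gmc def mollify} is independent of the choice of $\sigma$ satisfying the stated integrability condition, applied pointwise in $s$ after verifying the family $\{\sigma_s\}$ is uniformly admissible.
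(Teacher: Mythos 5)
Your overall strategy --- push the mollified approximation \eqref{eq gmc def mollify} through the change of variables and track the Jacobian together with the change in the variance renormalization --- is a legitimate route (essentially the Duplantier--Sheffield argument), and it genuinely differs from the paper's proof. The paper instead works with the Karhunen--Lo\`eve partial sums \eqref{eq gmc def basis}: at every finite truncation level the change of variables is \emph{exact}, the only comparison needed is between the two deterministic renormalizations $\sum_{k\leq n}(h_k\circ\varphi)^2$ versus the ``LGF'' normalization built from $-2\log|x-y|$ (whose difference converges uniformly to $-2\log|\varphi'|$), and the equivalence of the basis and mollification constructions from \cite{berestycki} does the rest. That route avoids entirely your step (iii), which you correctly single out as the main obstacle of your approach.

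There are, however, concrete problems with the proposal as written. First, the deterministic bookkeeping is inverted: pulling an $\vare$-arc around $\varphi(e^{\ii\theta})$ back through $\varphi^{-1}$ produces an arc of radius approximately $\vare/|\varphi'(e^{\ii\theta})|$, not $\vare\,|\varphi'(e^{\ii\theta})|$, and the Jacobian in the variable $\theta$ is $|\varphi'|$, not $|(\varphi^{-1})'|$. With the factors you actually state, $|\varphi'|^{-1}$ and $|\varphi'|^{-\gamma^2/4}$, the product is $|\varphi'|^{-(1+\gamma^2/4)}=e^{-\frac{\gamma}{2}Q\log|\varphi'|}$, the reciprocal of the weight you then assert; the identity $1+\frac{\gamma^2}{4}=\frac{\gamma}{2}Q$ is correct, but the sign of the conclusion does not follow from your intermediate claims. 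Second, your justification that $\mc M_{h\circ\varphi}$ is well defined is wrong: you invoke $\Diff_+(\m S^1)\subset\WP(\m S^1)$, but the lemma concerns $C^1$ diffeomorphisms, which need not be Weil--Petersson --- the paper flags exactly this point immediately after the statement of Lemma~\ref{lem:diff-coord-change}. The correct justification is that $h\circ\varphi$ is a Gaussian field whose covariance kernel is $-2\log|x-y|$ plus a continuous function, so the general GMC construction applies. Third, the critical case $\gamma=2$ included in the statement is not addressed; the standard mollified normalization yields the zero measure there, and one must pass through the limit $\frac{1}{2(2-\gamma)}\mc M_h^{\gamma}\to\mc M_h^{\mathrm{crit}}$ as in Remark~\ref{remark critical gmc limit}. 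Finally, step (iii) is the real technical content of your route and is only sketched; the plan (compare the variances of the two Gaussian mollifications uniformly, then upgrade via uniform integrability) is the right one, but it is precisely the work that the paper's argument is structured to avoid.
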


We note that there exist $C^1$ diffeomorphisms $\varphi$ of $\m S^1$ that does not belong to the Weil--Petersson class, in which case the law of the mean-zero part of $h\circ \varphi + Q\log |\varphi'|$ is singular to that of LGF on $\m S^1$. Nevertheless, this is a Gaussian field whose covariance kernel $G_\varphi(x,y)$ is of the form $-\log |x-y| +  u_\varphi(x,y) $ where $u_\varphi$ is a continuous function on $\m S^1 \times \m S^1$ (recall Remark \ref{remark mean-zero part}). Hence, as proved in \cite{berestycki}, the GMC measure $\mc M_{h\circ\varphi + Q\log|\varphi'|}^\gamma$ is well-defined via the limit \eqref{eq gmc def mollify} where the mollified field $h_\vare(e^{\ii \theta}) = (h, \sigma_{e^{\ii \theta},\vare})$ is replaced with $ (h \circ \varphi, \sigma_{e^{\ii \theta},\vare}) + \int_{\m S^1} Q\log|\varphi'|\, \dd\sigma_{e^{\ii \theta},\vare}$.

\begin{proof}[Proof of Lemma \ref{lem:diff-coord-change}]
    Let us first assume that $\gamma \in (0,2)$. Consider the GMC measure given by the following weak limit in probability:
    \begin{equation*}
        \widetilde{\mc M}_{h\circ \varphi}^\gamma := \lim_{\vare\to 0} \exp \left( \frac{\gamma}{2} (h\circ \varphi)_\vare(e^{\ii \theta}) - \frac{\gamma^2}{8} \var\big((h\circ \varphi)_\vare(e^{\ii \theta})\big) \right)\, \dd\theta,
    \end{equation*}
    where 
    \begin{equation*}
        (h\circ \varphi)_\vare(e^{\ii \theta}):= \frac{1}{2\vare} \int_{-\vare}^{\vare} (h \circ \varphi)(e^{\ii (\theta + t)})\,\dd t.
    \end{equation*}
    Let us define $h_\vare(e^{\ii \t})$ similarly, so that $\var\big(h_\vare(e^{\ii \t})\big) = (2\vare)^{-2} \int_{-\vare}^\vare \int_{-\vare}^\vare -2\log|e^{\ii \t} - e^{\ii \tilde \t}|\, \dd \t\, \dd \tilde\t$. Then,
    \begin{equation*}
        \var\big((h\circ \varphi)_\vare(e^{\ii \theta})\big) - \var\big(h_\vare(e^{\ii \t})\big) = \frac{1}{4\vare^2} \int_{-\vare}^\vare \int_{-\vare}^\vare -2 \log  \abs{\frac{\varphi(e^{\ii (\theta+t)}) - \varphi(e^{\ii (\theta+s)})}{e^{\ii (\theta+t)}-e^{\ii (\theta+s)}}}\, \dd s \, \dd t 
    \end{equation*}
    converges uniformly to $-2\log|\varphi'(e^{\ii \t})|$ as $\vare\to 0$. Hence, we have
    \begin{equation}
        \frac{\dd \widetilde{\mc M}_{h \circ\varphi}^\gamma}{\dd \mc M_{h \circ \varphi}^\gamma} (e^{\ii \theta}) = \big|{\varphi'(e^{\ii \theta})}\big|^{\gamma^2/4} .
    \end{equation}
    
    Let us choose $\{h_n\}_{n\geq 1}$ to be the orthonormal basis \eqref{eq trigonometric basis} for $\mc H_0$. Suppose we have the decomposition $h = \sum_{n\geq 1} \xi_n h_n$, where $\{\xi_n\}_{n\geq 1}$ are i.i.d.\ standard normal random variables. Then, since $h \circ\varphi = \sum_{n\geq 1} \xi_n (h_n \circ \varphi)$ is a Karhunen--Lo\`eve expansion where each $h_n \circ \varphi$ are continuous on $\m S^1$, we have from \cite[Sec.~5]{berestycki} that for any interval $A \subset \m S^1$, 
    \begin{equation}\begin{split}
        \mc M_h^\gamma(\varphi(A)) &= \lim_{N\to \infty} \int_{\varphi(A)} e^{\sum_{n=1}^N \big( \frac{\gamma}{2}\xi_n h_n(e^{\ii \theta}) - \frac{\gamma^2}{8} [h_n(e^{\ii \theta})]^2 \big) }\,\dd\theta\\
        &= \lim_{N\to \infty} \int_{A} e^{\sum_{n=1}^N \big( \frac{\gamma}{2}\xi_n (h_n\circ\varphi)(e^{\ii \theta}) - \frac{\gamma^2}{8} [(h_n\circ\varphi)(e^{\ii \theta})]^2 \big) }|\varphi'(e^{\ii \t})|\,\dd\theta\\
        &=\int_A |\varphi'(e^{\ii\theta})|\, \dd \widetilde{\mc M}_{h\circ\varphi}^\gamma (e^{\ii \theta}) = \int_A |\varphi'(e^{\ii\theta})|^{1+\gamma^2/4}\, \dd {\mc M}_{h\circ\varphi}^\gamma (e^{\ii \theta}) \\
        &= \mc M_{h \circ \varphi + Q\log|\varphi'|}^\gamma(A)
    \end{split}\end{equation}
    almost surely. For the last equality, we used the identity $\dd\mc M_{h + f}^\gamma/\dd \mc M_h^\gamma = e^{(\gamma/2)f}$ for any continuous $f$ on $\m S^1$, which can be checked from the definition \eqref{eq gmc def mollify} of the GMC measure. The equality for critical GMC holds by taking the limit as $\gamma \to 2^-$ as in Remark \ref{remark critical gmc limit}.
\end{proof}
 To extend Lemma \ref{lem:diff-coord-change} to the Weil--Petersson class, we approximate $\varphi$ by a sequence of diffeomorphisms which are continuous in the topology induced by the Weil--Petersson metric as in Lemma \ref{lem:wp-approx}.
   We now show that the GMC measure on $\m S^1$ is almost surely continuous under these approximations.  To our knowledge, this is the first time that the GMC measure is shown to be continuous with respect to the random field. The continuity with respect to the base measure and the parameter $\gamma$ has been shown in 2D in \cite[Prop~4.1]{PS-quantum-length}.
   
\begin{lem}\label{lem:gmc-convergence}
    Let $A \in \ms B(\mc H_0)$ be invertible in $\ms B(\mc H_0)$ and suppose $AA^*-I$ is Hilbert--Schmidt. Suppose $\{A_n\}_{n\geq 1}$ is a sequence in $\ms B(\mc H_0)$ such that $A_n$ converges to $A$ in the weak operator topology and $\|(A_nA^{-1})(A_nA^{-1})^*-I\|_{\HS} \to 0$. Let $\{\beta_n\}_{n\geq 1}$ be a sequence in $\mc H_0$ which converges to $\beta \in \mc H_0$. Let $h$ be an LGF on $\m S^1$. Then, for $\gamma \in (0,2)$, the Gaussian multiplicative chaos measures $\mc M_{A_n h + \beta_n}$ converges weakly to $\mc M_{Ah + \beta}$ in probability.
\end{lem}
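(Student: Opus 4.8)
The plan is to reduce the assertion to a one‑dimensional statement about arcs of $\m S^1$ and then exploit the explicit martingale structure of the subcritical GMC in the truncation parameter; the one genuinely delicate point will be an estimate uniform in $n$.

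\emph{Reduction.} It suffices to prove $\mc M_{A_nh+\beta_n}(I)\to\mc M_{Ah+\beta}(I)$ in $L^1(\Omega)$ — hence in probability — for every arc $I\subset\m S^1$: weak convergence of the random measures then follows by approximating a continuous test function uniformly by step functions and using that the total masses $\mc M_{A_nh+\beta_n}(\m S^1)$ (the case $I=\m S^1$) converge in probability, hence are tight. Also, writing $B_n:=A_nA^{-1}$, the hypotheses give $B_n\to I$ in the weak operator topology and $\|B_nB_n^*-I\|_{\rm op}\le\|B_nB_n^*-I\|_{\HS}\to0$, so $\|B_n\|^2=\|B_nB_n^*\|\to1$. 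Expanding $\|B_nf-f\|^2=\|B_nf\|^2-2\langle B_nf,f\rangle+\|f\|^2$ and combining $\langle B_nf,f\rangle\to\|f\|^2$ with $|\langle B_nf,f\rangle|/\|f\|\le\|B_nf\|\le\|B_n\|\,\|f\|$ shows $B_n\to I$ and, likewise, $B_n^*\to I$ in the \emph{strong} operator topology. In particular $A_nh_k=B_n(Ah_k)\to Ah_k$ in $\mc H_0$ for each $k$, while $\beta_n\to\beta$ there by hypothesis.

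\emph{Martingale split.} With $\{h_k\}_{k\ge1}$ the continuous orthonormal basis of $\mc H_0$ for which $h=\sum_k\xi_kh_k$, set $\mc F_N:=\sigma(\xi_1,\dots,\xi_N)$. Since $\sum_{k\le N}\xi_k(A_nh_k)=\mathbb E[A_nh\mid\mc F_N]$ and the partial sums have covariance kernels increasing to that of $A_nh$, the standard computation of the conditional expectation of a subcritical GMC along a Karhunen--Lo\`eve expansion gives, almost surely,
\begin{equation*}
\mathbb E\bigl[\mc M_{A_nh+\beta_n}(I)\mid\mc F_N\bigr]=\int_I\exp\!\Bigl(\tfrac{\gamma}{2}\bigl(\sum_{k\le N}\xi_k(A_nh_k)(\theta)+\beta_n(\theta)\bigr)-\tfrac{\gamma^2}{8}\sum_{k\le N}(A_nh_k)(\theta)^2+c_n(\theta)\Bigr)\dd\theta=:Z_n^N,
\end{equation*}
where $c_n$ is the deterministic diagonal renormalization of the covariance kernel of $A_nh$ (so that $e^{c_n}$ is the density of $\mathbb E[\mc M_{A_nh}(\dd\theta)]$); all integrands are genuine $L^1(\m S^1)$ functions by the fractional Moser--Trudinger inequality and the embedding $H^{1/2}(\m S^1)\hookrightarrow L^4(\m S^1)$. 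The same identity with $(A_n,\beta_n,c_n)$ replaced by $(A,\beta,c)$ defines $Z_\infty^N$. Inserting $Z_n^N$ and $Z_\infty^N$ into the triangle inequality reduces the lemma to: (a) $\lim_n\|Z_n^N-Z_\infty^N\|_{L^1(\Omega)}=0$ for each fixed $N$; (b) $\lim_N\|Z_\infty^N-\mc M_{Ah+\beta}(I)\|_{L^1(\Omega)}=0$; (c) $\lim_N\sup_n\|Z_n^N-\mc M_{A_nh+\beta_n}(I)\|_{L^1(\Omega)}=0$.

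\emph{The soft steps.} Step (b) is L\'evy's upward martingale convergence theorem applied to the integrable random variable $\mc M_{Ah+\beta}(I)$, since $Z_\infty^N=\mathbb E[\mc M_{Ah+\beta}(I)\mid\mc F_N]$. For (a), I would show that for a.e.\ fixed realization of $(\xi_k)$ the exponent in $Z_n^N$ converges to that in $Z_\infty^N$ — its linear part in $H^{1/2}(\m S^1)$ (using $A_nh_k\to Ah_k$, $\beta_n\to\beta$, and $c_n\to c$, the last being a regularity input that in the applications is the convergence $\log|\varphi_n'|\to\log|\varphi'|$ in $H^{1/2}$) and its quadratic part $\sum_{k\le N}(A_nh_k)^2$ in $L^2(\m S^1)$ (via $H^{1/2}\hookrightarrow L^4$). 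Since $f\mapsto e^{(\gamma/2)f}$ is continuous from $H^{1/2}(\m S^1)$ into every $L^p(\m S^1)$ (Moser--Trudinger) while $x\mapsto e^{-(\gamma^2/8)x}$ is bounded and Lipschitz on $[0,\infty)$, H\"older gives $L^1(\m S^1)$‑convergence of the integrands and hence $Z_n^N\to Z_\infty^N$ almost surely; as $\mathbb E[Z_n^N]=\int_Ie^{(\gamma/2)\beta_n+c_n}\,\dd\theta\to\mathbb E[Z_\infty^N]$, Scheff\'e's lemma upgrades this to convergence in $L^1(\Omega)$.

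\emph{The main obstacle.} The hard step is (c): the martingale $N\mapsto Z_n^N$ must approach its limit $\mc M_{A_nh+\beta_n}(I)$ at a rate uniform in $n$ (mere uniform integrability of $\{\mc M_{A_nh+\beta_n}(I)\}_n$ does not suffice). I would obtain this from (i) a uniform moment bound $\sup_n\mathbb E[\mc M_{A_nh+\beta_n}(I)^p]<\infty$ for some $p\in(1,4/\gamma^2)$ — which makes the martingale uniformly $L^p$‑bounded — together with (ii) a uniform control, as $N\to\infty$, of the tail covariances $\sum_{k>N}(A_nh_k)(x)(A_nh_k)(y)=B_nA P_N^{\perp}A^*B_n^*$ (with $P_N^{\perp}$ the orthogonal projection of $\mc H_0$ onto $\overline{\operatorname{span}}\{h_k:k>N\}$), so that the usual $L^p$‑estimate on GMC martingale increments runs with constants independent of $n$. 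For (i) I would write $A_nA_n^*-I=(B_nB_n^*-I)+B_n(AA^*-I)B_n^*$, use that $\|B_n\|,\|B_n^*\|$ are bounded uniformly in $n$, and apply Kahane's convexity inequality to sandwich $\mc M_{A_nh}(I)$ between constant multiples of $\mc M_h(\m S^1)$ in the $L^p$‑sense (using a uniform bound on the regular part of the kernel $G_{A_n}$, implicit in the well‑posedness of the GMC measures in play), combined with uniform integrability of the deterministic weights $e^{(\gamma/2)\beta_n+c_n}$ (Moser--Trudinger plus the uniform bound on $c_n$); when the regular part is only integrable, a multiscale decomposition replaces Kahane. Alternatively, (c) can be routed through Shamov's description of the GMC as a measurable functional of the field together with its stability under the present mode of convergence of fields. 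Assembling (a)--(c) yields $\mc M_{A_nh+\beta_n}(I)\to\mc M_{Ah+\beta}(I)$ in $L^1(\Omega)$, which by the first step is the lemma.
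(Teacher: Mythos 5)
Your overall architecture (reduce to arcs, use the Karhunen--Lo\`eve martingale $Z_n^N=\m E[\mc M_{A_nh+\beta_n}(I)\mid\mc F_N]$, and run a three-term triangle inequality) parallels the paper's proof, and your steps (a) and (b) correspond to what the paper establishes. But your step (c) --- the uniformity in $n$ of the martingale convergence --- is exactly where the paper's proof has its one real idea, and your proposed route to it has a genuine gap. The uniform $L^p$ moment bound via Kahane's convexity inequality, and the ``usual $L^p$ estimate on GMC martingale increments'' for the tail covariances, both require \emph{pointwise} control of the regular part of the kernels $G_{A_n}(x,y)-G(x,y)$ and of the tail kernels $\sum_{k>N}(A_nh_k)(x)(A_nh_k)(y)$. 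The hypothesis of the lemma only gives Hilbert--Schmidt control, i.e.\ $\ell^2$-summability of the matrix entries $\langle h_j,(A_nA_n^*-I)h_k\rangle$; this does not bound $\sum_{j,k}|\langle h_j,(A_nA_n^*-I)h_k\rangle h_j(x)h_k(y)|$ pointwise, nor even guarantee that your diagonal renormalization $c_n(\theta)$ (and hence $\m E[\mc M_{A_nh+\beta_n}(I)]$, on which your entire $L^1(\Omega)$ framework rests) is finite. Your hedges --- ``implicit in the well-posedness of the GMC measures in play,'' a ``multiscale decomposition,'' or ``Shamov\dots stability under the present mode of convergence'' --- are precisely the statements that would need proof, and the last is circular.

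The paper's proof avoids all of this by a change-of-measure argument. It first reduces to $A=I$, $\beta=0$ (for general $A,\beta$ one replaces the reference field $h$ by $\tilde h=Ah+\beta$ and $A_n$ by $A_nA^{-1}$, which is exactly why the hypothesis is phrased in terms of $\|(A_nA^{-1})(A_nA^{-1})^*-I\|_{\HS}$). It then defines $X_m^{(n)}$ as the \emph{same} functional of $A_nh+\beta_n$ that sends $h$ to $X_m$ --- keeping the normalization $-\frac{\gamma^2}{8}\sum_{k\le m}h_k^2$ of the reference field, so no intrinsic renormalization $c_n$ ever appears --- and observes that $(X_m^{(n)},X^{(n)})$ has the law of $(X_m,X)$ under the tilted measure $\rho_n\,\dd\m P$, where $\rho_n$ is the Radon--Nikodym derivative of the law of $A_nh+\beta_n$ with respect to that of $h$. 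Hence
\begin{equation*}
\m P\bigl(|X_m^{(n)}-X^{(n)}|>\delta\bigr)=\m E\bigl[\rho_n\,\mathbf 1_{\{|X_m-X|>\delta\}}\bigr]\le K\,\m P(|X_m-X|>\delta)+\m E\bigl[\rho_n\mathbf 1_{\{\rho_n>K\}}\bigr],
\end{equation*}
and the only uniformity needed is $\sup_{n\ge n_0}\m E[\rho_n^2]<\infty$, which is an explicit Gaussian product computation using exactly $\|A_nA_n^*-I\|_{\HS}\to0$ and $\|\beta_n\|_{\mc H_0}\to0$. This makes the hard step essentially free, works with convergence in probability (no moments of the limiting GMC are ever needed), and uses the Hilbert--Schmidt hypothesis in the only place it can be used. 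If you want to salvage your write-up, replace step (c) by this tilting argument; as written, step (c) does not go through under the stated hypotheses.
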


\begin{proof}
    Assume first that $A = I$ and $\beta = 0$. In this case, as outlined in \cite[Sec.~6]{berestycki}, it suffices to show that $\mc M_{A_nh+\beta}(S) \to \mc M_h(S)$ in probability for each fixed interval $S \subseteq \m S^1$. The proof proceeds by considering the approximations \eqref{eq gmc def basis} of $X:=\mc M_h(S)$ using the orthonormal basis $\{h_k\}_{k\in \m N}$ for $\mc H_0$ given in \eqref{eq trigonometric basis}. That is, let 
    \begin{equation}
        X_m:= \int_S \exp\left( \frac{\gamma}{2}\sum_{k=1}^{m} \langle h, h_k\rangle h_k(e^{\ii\theta}) -\frac{\gamma^2}{8} \sum_{k=1}^m (h_k(e^{\ii\theta}))^2 \right)\dd\theta.
    \end{equation}
    We know from \cite[Eq.~(5.1)]{berestycki} that $X_m = \m E[X|\mc F_m]$ where $\mc F_m$ is the $\sigma$-algebra generated by $\{\langle h,h_k\rangle\}_{1\leq k\leq m}$, whence $X_m\to X$ in $L^1(\m P)$. We consider the analogous approximations 
    \begin{equation}
        X_m^{(n)}:= \int_S \exp\left( \frac{\gamma}{2}\sum_{k=1}^{m} \langle A_nh+\beta_n, h_k\rangle h_k(e^{\ii\theta}) -\frac{\gamma^2}{8} \sum_{k=1}^m (h_k(e^{\ii\theta}))^2 \right)\dd\theta,
    \end{equation}
    which converges in probability to $X^{(n)}:= \mc M_{A_nh+\beta_n}(S)$ as $n\to\infty$ since the law of $A_nh + \beta_n$ is absolutely continuous with respect to that of $h$ by the Feldman--H\'ajek theorem (cf.\ Theorem \ref{thm:main_1}). We now claim the following.
    \begin{itemize}
        \item For each $m\in \m N$, as $n\to\infty$,
        \begin{equation}\label{eq:approx1}
            X_m^{(n)} \to X_m \quad \text{in probability}.
        \end{equation}
        \item There exists a positive integer $n_0$ such that for any $\delta>0$, we have 
        \begin{equation}\label{eq:approx2}
            \lim_{m\to\infty} \sup_{n\geq n_0} \m P(|X_m^{(n)}-X^{(n)}|>\d) = 0.
        \end{equation}
    \end{itemize}
    Then, for any given $\d>0$, we can choose sufficiently large $m$ and then $n_0$ such that 
    \begin{equation*}
        \m P(|X^{(n)} - X_m^{(n)}| > \d) +    \m P(|X_m^{(n)} - X_m|>\d) + \m P(|X_m - X|>\d) < \d.
    \end{equation*}
    That is, $X^{(n)} \to X$ in probability as $n\to\infty$.
    
    Let us now show \eqref{eq:approx1}. Note from \eqref{eq trigonometric basis} that $|h_k(e^{\ii\theta})|\leq \sqrt 2$ for every $k$ and $\theta$. Hence, 
    \begin{equation*}
        \begin{split}
            \m E \left[ \sup_{e^{\ii\theta}\in \m S^1} \left|\sum_{k=1}^m\langle (A_n-I)h + \beta_n, h_k\rangle h_k (e^{\ii\theta})\right| \right] 
            &\leq \sqrt 2\sum_{k=1}^{m} \big(\m E\left[ |\langle (A_n-I)h,h_k\rangle| \right]+ |\langle\beta_n,h_k\rangle|\big)\\
            &\leq \sqrt 2\sum_{k=1}^{m} \norm{(A_n-I)^*h_k}_{\mc H_0} + 2\sqrt{m} \norm{\beta_n}_{\mc H_0}.
        \end{split}
    \end{equation*}
    Note that
    \begin{equation*}
        \begin{split}
            \norm{(A_n-I)^*h_k}_{\mc H_0}^2 &= |\langle h_k, (A_nA_n^*-I) h_k\rangle +\langle h_k, (2I-A_n-A_n^*)h_k\rangle| \\
            &\leq \norm{A_nA_n^*-I}_{\HS} + 2|\langle h_k, (A_n-I)h_k\rangle|,
        \end{split}
    \end{equation*}
    which tends to 0 as $n\to \infty$ by assumption. Combining the above two inequalities, we see that given any sequence of positive integers, we can find a further sequence $\{n_j\}_{j\geq 1}$ such that 
    $   \sup_{e^{\ii\theta}\in \m S^1} |\sum_{k=1}^m \langle (A_{n_j}-I)h + \beta_{n_j}, h_k\rangle - \langle h, h_k\rangle h_k (e^{\ii\theta})| \to 0 $
    almost surely as $j\to\infty$. As we have 
    \begin{equation*}
      \inf_{e^{\ii\theta}\in S} e^{\frac{\g}{2} \sum_{k=1}^m \langle (A_{n_j}-I) h + \beta_{n_j}, h_k\rangle h_k(e^{\ii\theta})} \leq  \frac{X_m^{(n_j)}}{X_m} \leq \sup_{e^{\ii\theta}\in S} e^{\frac{\g}{2} \sum_{k=1}^m \langle (A_{n_j}-I) h + \beta_{n_j}, h_k\rangle h_k(e^{\ii\theta})} 
    \end{equation*}
    from the definitions of $X_m^{(n)}$ and $X_m$, we conclude that $X_m^{(n_j)} \to X_m$ almost surely as $j\to \infty$ and thus $X_m^{(n)} \to X_m$ in probability as $n\to\infty$. 

    Let us now show \eqref{eq:approx2}. For this, we will first check that if $\rho_n$ is the Radon--Nikodym derivative of the law of $A_n h + \beta_n$ with respect to that of $h$, then $\sup_{n\geq n_0} \m E[(\rho_n)^2]<\infty$ for sufficiently large $n_0$.\footnote{The same calculations can be used to show that $\sup_{n\geq 1} \m E[(\rho_n)^p]<\infty$ for sufficiently small $p>1$, or, for any given $p>1$, we have $\sup_{n\geq n_0} \m E[(\rho_n)^p]<\infty$ for sufficiently large $n_0$.} Let us fix $A \in \ms B(\mc H_0)$ with $\|AA^*-I\|_{\HS}^2<1/2$ and $\beta \in \mc H_0$. Choose $\{\tilde h_n\}_{n\geq 1}$ be an orthonormal eigenbasis for $\mc H_0$ with respect to $AA^*-I$, with $(AA^*-I)\tilde h_k = a_k \tilde h_k$ for each $k$. Observe that $\{\langle h,\tilde h_k\rangle\}_{k\geq 1}$ are i.i.d.\ standard normal random variables, whereas $\{\langle Ah + \beta, \tilde h_k\rangle\}_{k\geq 1}$ are independent Gaussian with mean $\langle \beta,\tilde h_k\rangle=:b_k$ and variance $\langle \tilde h_k, AA^*\tilde h_k\rangle = (1+a_k)$. Hence, the Radon--Nikodym derivative $\rho$ of the law of $Ah+\beta$ with respect to that of $h$ is given by 
    \begin{equation*}
        \rho\left( \sum_{k\geq 1} x_k \tilde h_k \right) = \prod_{k\geq 1} \frac{(2\pi(1+a_k))^{-1/2} \exp(-(x_k-b_k)^2/2(1+a_k))}{(2\pi)^{-1/2}\exp(-x_k^2/2)} =:\prod_{k\geq 1} F_k(x_k).
    \end{equation*}
    A straightforward calculation gives us that if $Z$ is a standard normal random variable, then 
    \begin{equation*}
        \m E[\rho^2] = \prod_{k\geq 1} \m E[(F_k(Z))^2] = \prod_{k\geq 1} \frac{1}{(1-a_k^2)^{1/2}} \exp\left(\frac{b_k^2}{1-a_k^2}\right).
    \end{equation*}
    Now, since $\sum_{k\geq 1} a_k^2 = \|AA^*-I\|_{\HS}^2 < 1/2$, we have $a_k^2 < 1/2$ for all $k$. Hence,
    \begin{equation*}
        \log \m E[\rho^2] \leq \sum_{k\geq 1} a_k^2 + 2\sum_{k\geq 1}b_k^2 = \|AA^*-I\|_{\HS}^2 + 2\|\beta\|_{\mc H_0}^2.
    \end{equation*}
    Since $\|A_nA_n^*-I\|_{\HS}\to 0$ and $\|\beta_n\|_{\mc H_0}\to 0$ as $n\to\infty$, we conclude that $\m E[\rho_n^2] \to 1$ as $n\to\infty$. Fix $n_0$ to be a positive integer such that $\sup_{n\geq n_0} \m E[(\rho_n)^2]<\infty$. Now, given any $\d>0$, we have
    \begin{equation*}
        \begin{split}
            \m P(|X_m^{(n)} - X^{(n)}| > \delta) = \m E[\rho_n \mathbf 1_{\{|X_m - X| > \delta\}}]
            &\leq K \m P(|X_m - X|>\delta) + \m E[\rho_n \mathbf 1_{\{\rho_n> K\}}] \\
            &\leq \frac{K}{\d} \m E|X_m-X| + \sqrt{\frac{\m E[(\rho_n)^2]}{K}}.
        \end{split}
    \end{equation*}
    Since $X_m \to X$ in $L^1(\m P)$, by first choosing sufficiently large $K$ and then choosing large $m$, we can make $\sup_{n\geq n_0} \m P(|X_m^{(n)} - X^{(n)}|>\delta)$ arbitrarily small.
    This completes the proof of \eqref{eq:approx2} and therefore the lemma in the case $A=I$ and $\beta=0$.

    For general $A\in \ms B(\mc H_0)$ such that $\|AA^*-I\|_{\HS}<\infty$ and $\beta\in \mc H_0$, recall that the law of $\tilde h:=Ah + \beta$ is absolutely continuous with respect to that of $h$ by the Feldman--H\'ajek theorem. Let  $\tilde A_n := A_nA^{-1} \in \ms B(\mc H_0)$ and $\tilde \beta_n:= \beta_n - A_nA^{-1} \beta \in \mc H_0$, so that $A_nh+\beta_n = \tilde A_n \tilde h + \tilde \beta_n$. Then, $\tilde A_n \to I$ in the weak operator topology, $\|\tilde A_n \tilde A_n^*-I\|_{\HS} \to 0$, and $\|\tilde \beta_n\|_{\mc H_0}\to 0$ by the assumptions of the lemma, so we conclude that $\mc M_{\tilde A_n\tilde h + \tilde \beta_n} = \mc M_{A_n h + \beta_n}$ converges weakly to $\mc M_{\tilde h} = \mc M_{Ah + \beta}$ in probability.
\end{proof}

We are now ready for a proof of Proposition \ref{prop:quasi-invariance of weighted Liouville measure}. We divide this into two parts, first showing that if $\varphi \in \WP(\m S^1)$, then the laws of $\widehat{\mc M}_h$ and $\widehat{\mc M}_{h\circ \varphi + Q\log|\varphi'|}$ are absolutely continuous by checking that the coordinate change formula \eqref{eq:coordinate-change} holds.

\begin{proof}[Proof of Sufficiency] 
Let us first consider $\gamma \in (0,2)$.
    Suppose $\varphi \in \WP(\m S^1)$ and let $\varphi_n \in \Diff_+(\m S^1)$ be a sequence of approximations to $\varphi$ given in Lemma~\ref{lem:wp-approx}. Let $A_n = \Pi(\varphi_n)$ and $\beta_n = \pi_0( Q\log |\varphi_n'|)$ for each $n$, and similarly let $A = \Pi(\varphi)$ and $\beta = \pi_0(Q\log|\varphi'|)$. Then, we have $A_n \to A$ in the strong operator topology and $\|(A_nA^{-1})(A_nA^{-1})^*-I\|_{\HS}\to 0$ by our choice of approximations. Also, since $\log|(\varphi_n \circ \varphi^{-1})'| = (\log |\varphi_n'| - \log|\varphi'|) \circ \varphi^{-1}$, we have $\|\beta_n - \beta\|_{\mc H_0} \leq Q\|\Pi(\varphi)\|_{\ms B(\mc H_0)} \|\log|(\varphi_n \circ \varphi^{-1})|'\|_{\mc H_0}$ tending to 0 as $n\to\infty$.

    We have from Lemma~\ref{lem:diff-coord-change} that the coordinate change formula
    \begin{equation*}
        \varphi_n^* \mc M_h = \mc M_{h \circ \varphi_n + Q\log |\varphi_n'|} 
        = e^{\frac{\gamma}{4\pi}\int_{\m S^1} (h \circ \varphi_n + Q\log |\varphi_n'|)} \mc M_{A_nh + \beta_n}
    \end{equation*}
    holds almost surely for every $n$. By Lemma~\ref{lem:gmc-convergence}, $\mc M_{A_nh + \beta_n}$ converges weakly in probability to $\mc M_{Ah + \beta}$. Since $\sum_k\abs{\int_{\m S^1}(h_k \circ\varphi_n - h_k \circ \varphi) }^2\leq C\norm{\varphi_n'-\varphi'}_{H^{-1/2}(\m S^1)}^2\to 0$ as $n\to\infty$, we have $\int_{\m S^1} h\circ \varphi_n \to \int_{\m S^1} h \circ\varphi$ almost surely. We also have $\int_{\m S^1} \log|\varphi_n'| \to \int_{\m S^1} \log|\varphi'|$, so 
    \begin{equation}
        \varphi_n^* \mc M_h \to e^{\frac{\gamma}{4\pi}\int_{\m S^1} (h \circ \varphi + Q\log |\varphi'|)} \mc M_{Ah + \beta} = \mc M_{h\circ\varphi + Q\log|\varphi'|}
    \end{equation}
    weakly in probability as $n\to\infty$.

    On the other hand, since $\|\varphi - \varphi_n\|_\infty \to 0$, for every $F\in C(\m S^1)$, we have 
    \begin{equation*} \begin{aligned}
        \lim_{n\to \infty} \int_{\m S^1} F\, \dd(\varphi_n^* \mc M_h) &= \lim_{n\to \infty} \int_{\m S^1} (F\circ \varphi_n)\, \dd\mc M_h = \int_{\m S^1} (F\circ \varphi)\, \dd\mc M_h = \int_{\m S^1} F\, \dd (\varphi^* \mc M_h)
    \end{aligned} \end{equation*}
    almost surely. That is, $\varphi_n^* \mc M_h$ converges almost surely in the weak topology to $\varphi^* \mc M_h$. We thus conclude that $\varphi^* \mc M_h = \mc M_{h\circ\varphi + Q\log|\varphi'|}$ almost surely. 
    As described in Remark \ref{remark critical gmc limit}, we have $\frac{1}{2(2-\gamma)}\mc M_h^{\gamma} \to \mc M_h^{\mathrm{crit}}$ weakly in probability as $\gamma\to 2^{-}$. Hence, the coordinate change rule \eqref{eq:coordinate-change} holds for the critical case $\gamma= 2$ as well.

    To conclude the proof, observe that  
    \begin{equation}
        \varphi^* \widehat{\mc M}_h = \frac{1}{\mc M_h(\m S^1)} \varphi^*\mc M_h = \frac{1}{\varphi^*\mc M_h(\m S^1)} \varphi^*\mc M_h = \frac{1}{\mc M_{Ah + \beta}(\m S^1)} \mc M_{Ah+\beta} = \widehat{\mc M}_{Ah+\b}
    \end{equation}
    almost surely and the law of $Ah+\b$ is equivalent to that of $h$ by Theorem \ref{thm:main_1}. Therefore, the law of $\varphi^* \widehat{\mc M}_h$ is equivalent to that of $\widehat{\mc M}_h$.
\end{proof}

\begin{proof}[Proof of necessity]
    Let $\varphi \in \Homeo_+(\m S^1)$ and assume that the law of the pull-back $\varphi^* \widehat{\mc M}_h$ is absolutely continuous with respect to that of $\widehat{\mc M}_h$. Then, since the GMC almost surely determines the LGF \cite{bss-equiv-gmc,vihko-equiv-gmc}, there exists a random mean-zero distribution $\tilde h$ on $\m S^1$ with a law absolutely continuous respect to LGF on $\m S^1$ such that $\widehat{\mc M}_{\tilde h} = \varphi^* \widehat{\mc M}_h$ almost surely.
    
    Choose a sequence of smooth $\varphi_n \in \Diff_+(\m S^1)$ which converges uniformly to $\varphi$ (e.g., by convolution). Since $\widehat{\mc M}_{h\circ \varphi_n + Q\log |\varphi_n'|}=\varphi_n^*\widehat{\mc M}_h \to \varphi^* \widehat{\mc M}_h$ almost surely, we have $\Pi(\varphi_n) h + \pi_0(Q\log|\varphi_n'|) \to \tilde h$ in law. In particular, $\tilde h$ is a Gaussian field whose law is absolutely continuous with respect to that of LGF on $\m S^1$. Then, by the Feldman--H\'ajek theorem, there exists some $Q \in \ms B(\mc H_0)$ with $Q-I$ Hilbert--Schmidt such that for any $f,g\in \mc H_0$,
    \begin{equation}\label{eq:nec-cov-conv}
    \begin{aligned}
        \lim_{n\to\infty} \langle\Pi(\varphi_n)^*f, \Pi(\varphi_n)^* g\rangle
        &= \lim_{n\to\infty} \mathrm{Cov}(\langle\Pi(\varphi_n) h, f\rangle, \langle\Pi(\varphi_n) h, g\rangle )\\
        &=\mathrm{Cov}(\langle\tilde h, f\rangle,\langle\tilde h, g\rangle) = \langle f, Qg\rangle
    \end{aligned} \end{equation}
    where $\langle\cdot,\cdot\rangle$ is the inner product on $\mc H_0$.
    That is, $\Pi(\varphi_n) \Pi(\varphi_n)^*$ converges in the weak operator topology to $Q$.
    In particular, for any $f\in \mc H_0$, we have 
    \begin{equation*}
        \lim_{n\to\infty} \|\Pi(\varphi_n)^* f\|_{\mc H_0}^2 = \langle f,Qf\rangle \leq \|Q\|_{\ms B(\mc H_0)} \|f\|_{\mc H_0}^2.
    \end{equation*}
    Then, by the uniform boundedness principle, 
    \begin{equation*}
        \sup_n \|\Pi(\varphi_n)\|_{\ms B(H)} = \sup_n \|\Pi(\varphi_n)^*\|_{\ms B(H)} < \infty.
    \end{equation*}
    For any $f\in \mc H_0$ with $\|f\|_{\mc H_0} \leq 1$, by Fatou's lemma,
    \begin{equation*}
        \begin{aligned}
            \|\Pi(\varphi) f\|_{\mc H_0}^2 &= \iint_{\m S^1 \times \m S^1} \frac{|f(\varphi(e^{\ii\theta})) - f(\varphi(e^{\ii \psi}))|^2}{|e^{\ii\theta} - e^{\ii \psi}|^2}\,\dd \t\,\dd \psi \\
            &\leq \liminf_{n\to\infty} \iint_{\m S^1 \times \m S^1} \frac{|f(\varphi_n(e^{\ii\theta})) - f(\varphi_n(e^{\ii \psi}))|^2}{|e^{\ii\theta} - e^{\ii \psi}|^2}\,\dd \t\,\dd \psi = \liminf_{n\to\infty} \|\Pi(\varphi_n)f\|_{\mc H_0}^2 \\
            &\leq \liminf_{n\to\infty} \|\Pi(\varphi_n)\|_{\ms B(\mc H_0)}^2 < \infty .
        \end{aligned}
    \end{equation*}
    Therefore, $\Pi(\varphi) \in \ms B(\mc H_0)$ and $\varphi \in \QS(\m S^1)$ follows by Lemma~\ref{lem properties of Pi_var}. 
    
    We see from the formulas \eqref{eq:op-entry-m} and \eqref{eq:op-entry-n} that $\Pi(\varphi_n) \to \Pi(\varphi)$ entrywise with respect to the basis \eqref{eq:basis} for $\mc H_0^{\m C}$. Hence, $Q = \Pi(\varphi) \Pi(\varphi)^*$ and $\Pi(\varphi) \Pi(\varphi)^*-I$ is Hilbert--Schmidt. We conclude $\varphi \in \WP(\m S^1)$ using Lemma~\ref{lem properties of Pi_var}.
\end{proof}

In the Liouville theory, we often consider Gaussian multiplicative chaos measures with respect to a log-correlated Gaussian field plus a logarithmic singularity (cf.\ Lemma~\ref{Int lem:welding-homeo}). Below, we give an analog of Propositon \ref{prop:quasi-invariance of weighted Liouville measure} for such fields. Recall from \eqref{eq:diff ratio} the log-ratio 
\begin{equation*}
    u_\varphi(z,w) = \log \frac{\varphi(z)-\varphi(w)}{z-w}.
\end{equation*}

\begin{cor}\label{cor: one marked boundary}
    Fix $\gamma \in (0,2]$. Let $h$ be the LGF on $\m S^1$ and $\mathfrak h(z) := - \alpha \log |z - 1|$ where $\alpha$ is a positive constant. Suppose $\varphi \in \Homeo_+(\m S^1)$ fixes 1. Then, the law of the normalized pullback measure $\varphi^* \widehat{\mc M}_{h+\mathfrak h}$ is absolutely continuous with respect to the law of $\widehat{\mc M}_{h+\mathfrak h}$ if and only if $\varphi \in \WP(\m S^1)$ and $u_\varphi(\cdot,1) \in H^{1/2}(\m S^1)$.
\end{cor}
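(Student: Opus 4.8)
The plan is to incorporate the fixed logarithmic insertion $\mathfrak h(z)=-\alpha\log|z-1|$ into the coordinate change formula of Proposition~\ref{prop:quasi-invariance of weighted Liouville measure}, thereby reducing the assertion to the field-level corollary following Theorem~\ref{thm:main_1}, and then to transfer absolute continuity back and forth between the normalized GMC measures and their underlying fields by stripping off the deterministic singular weight. First, since $\mathfrak h$ is a fixed function one has $\mc M_{h+\mathfrak h}^\gamma=|z-1|^{-\alpha\gamma/2}\mc M_h^\gamma$, so that for $\varphi\in\Homeo_+(\m S^1)$ with $\varphi(1)=1$, $\varphi^*\mc M_{h+\mathfrak h}=|\varphi(z)-1|^{-\alpha\gamma/2}\varphi^*\mc M_h$. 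If in addition $\varphi\in\WP(\m S^1)$, Proposition~\ref{prop:quasi-invariance of weighted Liouville measure} gives $\varphi^*\mc M_h=\mc M_{h\circ\varphi+Q\log|\varphi'|}$; using $\varphi(1)=1$ to write $|\varphi(z)-1|=|z-1|\exp\big(\Re u_\varphi(z,1)\big)$ with $\Re u_\varphi(\cdot,1)=\log|(\varphi(\cdot)-\varphi(1))/(\cdot-1)|$, one obtains $\varphi^*\mc M_{h+\mathfrak h}=\mc M_{h\circ\varphi+Q\log|\varphi'|-\alpha\Re u_\varphi(\cdot,1)+\mathfrak h}$. Peeling off the (random, a.s.\ finite) spatial mean, which disappears upon normalization, this becomes almost surely
\begin{equation}\label{eq:cor-reduction}
    \varphi^*\widehat{\mc M}_{h+\mathfrak h}=\widehat{\mc M}_{\tilde h+\mathfrak h},\qquad \tilde h:=\Pi(\varphi)(h)+\pi_0\big(Q\log|\varphi'|-\alpha\,\Re u_\varphi(\cdot,1)\big).
\end{equation}
(Throughout one tacitly assumes $\alpha<Q$, which is what is needed for $\mc M_{h+\mathfrak h}$ to have finite total mass a.s.\ and hence for $\widehat{\mc M}_{h+\mathfrak h}$ to be defined.)

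Next I would pass between measures and fields. Multiplying $\widehat{\mc M}_{g+\mathfrak h}$ by the bounded continuous function $|z-1|^{\alpha\gamma/2}$ and renormalizing recovers $\widehat{\mc M}_g$, which almost surely determines $\pi_0 g$ by the equivalence of GMC and the log-correlated field \cite{bss-equiv-gmc,vihko-equiv-gmc}; thus $g\mapsto\widehat{\mc M}_{g+\mathfrak h}$, restricted to mean-zero fields with law absolutely continuous with respect to an LGF, is a Borel isomorphism onto its image, so the law of $\widehat{\mc M}_{g_1+\mathfrak h}$ is absolutely continuous with respect to that of $\widehat{\mc M}_{g_2+\mathfrak h}$ if and only if the law of $g_1$ is absolutely continuous with respect to that of $g_2$. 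Combining this with \eqref{eq:cor-reduction} (and with the fact that an LGF is mean-zero), for $\varphi\in\WP(\m S^1)$ the law of $\varphi^*\widehat{\mc M}_{h+\mathfrak h}$ is absolutely continuous with respect to that of $\widehat{\mc M}_{h+\mathfrak h}$ if and only if the law of $\tilde h$ is absolutely continuous with respect to that of an LGF. By the corollary following Theorem~\ref{thm:main_1}, applied with its parameter ``$\alpha$'' equal to our $-\alpha\neq0$ and with $z_0=1$ (and recalling that by Feldman--H\'ajek ``absolutely continuous'' coincides with ``equivalent'' here), this happens if and only if $\varphi\in\WP(\m S^1)$ and $\Re u_\varphi(\cdot,1)=\log|(\varphi(\cdot)-\varphi(1))/(\cdot-1)|\in H^{1/2}(\m S^1)$, which is the condition on $u_\varphi(\cdot,1)$ in the statement (understood, as in Theorem~\ref{thm:main}, as membership of the log-modulus). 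This already yields the ``if'' direction, and the ``only if'' direction whenever $\varphi$ is known a priori to lie in $\WP(\m S^1)$.

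Finally, for the ``only if'' direction with $\varphi$ merely a homeomorphism, the reduction above is unavailable since Proposition~\ref{prop:quasi-invariance of weighted Liouville measure} does not apply, so I would re-run the necessity argument of Proposition~\ref{prop:quasi-invariance of weighted Liouville measure} with the insertion carried along: approximate $\varphi$ uniformly by $\varphi_n\in\Diff_+(\m S^1)$ with $\varphi_n(1)=1$, for which \eqref{eq:cor-reduction} holds by Lemma~\ref{lem:diff-coord-change} with $\tilde h_n:=\Pi(\varphi_n)(h)+\pi_0(Q\log|\varphi_n'|-\alpha\Re u_{\varphi_n}(\cdot,1))$ a Gaussian field ($\Re u_{\varphi_n}(\cdot,1)$ being smooth). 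Since $\varphi_n\to\varphi$ uniformly, $\varphi_n^*\widehat{\mc M}_{h+\mathfrak h}\to\varphi^*\widehat{\mc M}_{h+\mathfrak h}$ a.s., and stripping the bounded continuous weight $|z-1|^{\alpha\gamma/2}$ as before gives a.s.\ convergence $\widehat{\mc M}_{\tilde h_n}\to\mc N$ for a random measure $\mc N$; assuming the law of $\varphi^*\widehat{\mc M}_{h+\mathfrak h}$ is absolutely continuous with respect to that of $\widehat{\mc M}_{h+\mathfrak h}$, the Feldman--H\'ajek and uniform-boundedness arguments of Proposition~\ref{prop:quasi-invariance of weighted Liouville measure} then force $\Pi(\varphi)\in\ms B(\mc H_0)$ (so $\varphi\in\QS(\m S^1)$) and $\Pi(\varphi_n)\Pi(\varphi_n)^*\to\Pi(\varphi)\Pi(\varphi)^*$ weakly with $\Pi(\varphi)\Pi(\varphi)^*-I$ Hilbert--Schmidt, whence $\varphi\in\WP(\m S^1)$ by Lemma~\ref{lem properties of Pi_var}; the $H^{1/2}$ condition on the log-ratio then follows from the case already treated. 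I expect this last step to be the main obstacle: because $\varphi$ is a priori only a homeomorphism and the insertion interacts with $\varphi$ near $1$ in a way that cannot be controlled for rough $\varphi$, one must carefully identify the in-law limit of the Gaussian fields $\tilde h_n$ with $\tilde h$, using that the GMC determines the field — everything else being routine once Proposition~\ref{prop:quasi-invariance of weighted Liouville measure}, the corollary following Theorem~\ref{thm:main_1}, and \cite{bss-equiv-gmc,vihko-equiv-gmc} are granted.
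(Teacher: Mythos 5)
Your proposal follows essentially the same route as the paper: reduce to the field level via the coordinate-change formula with the logarithmic insertion carried along, apply Theorem~\ref{thm:main_1} (equivalently its corollary) for sufficiency, and for necessity re-run the diffeomorphism-approximation plus Feldman--H\'ajek argument from Proposition~\ref{prop:quasi-invariance of weighted Liouville measure} with the extra term $\pi_0(\mathfrak h\circ\varphi_n-\mathfrak h+Q\log|\varphi_n'|)$. The only cosmetic difference is that the paper justifies $\varphi^*\mc M_{h+\mathfrak h}=\mc M_{h\circ\varphi+\mathfrak h\circ\varphi+Q\log|\varphi'|}$ by truncating $\mathfrak h\wedge C$ and using locality of GMC rather than asserting the Radon--Nikodym identity $\mc M_{h+\mathfrak h}=|z-1|^{-\alpha\gamma/2}\mc M_h$ for the unbounded weight directly, which is a minor point of rigor rather than a different idea.
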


\begin{proof}
    Suppose $\varphi \in \WP(\m S^1)$ and $u_\varphi(\cdot,1) \in H^{1/2}$, in which case $\mathrm{Re}\, u_\varphi(\cdot,1) = \log|(\varphi(\cdot)-1)/(\cdot-1)| \in H^{1/2}$. For each fixed $C$, since the law of $h + (\mathfrak h \wedge C)$ is absolutely continuous to that of $h$, we almost surely have $\varphi^* \mc M_{h + (\mathfrak h \wedge C)} = \mc M_{h \circ \varphi + (\mathfrak h \circ \varphi \wedge C) + Q\log|\varphi'|}$ by Proposition~\ref{prop:quasi-invariance of weighted Liouville measure}. Letting $C\to \infty$, since GMC is a local functional of the field, we obtain $\varphi^* \mc M_{h + \mathfrak h} = \mc M_{h \circ \varphi + \mathfrak h \circ \varphi + Q\log|\varphi'|}$ almost surely. Since $\mathfrak h \circ \varphi - \mathfrak h + Q\log|\varphi'|\in H^{1/2}$ by our assumption that $\mathrm{Re}\, u_\varphi(\cdot,1) \in H^{1/2}$, the law of $\Pi(\varphi) h + \pi_0(\mathfrak h \circ \varphi + Q\log|\varphi'|)$ is absolutely continuous with respect to that of $h + \pi_0(\mathfrak h)$ by Theorem \ref{thm:main_1}. This implies that the law of $\varphi^*\widehat{\mc M}_{h + \mathfrak h}$ is absolutely continuous with respect to that of $\widehat{\mc M}_{h +\mathfrak h}$.

    On the other hand, suppose that the law of $\varphi^*\widehat{\mc M}_{h + \mathfrak h}$ is absolutely continuous with respect to that of $\widehat{\mc M}_{h +\mathfrak h}$ for some $\varphi \in \Homeo_+(\m S^1)$. Pick $\varphi_n \in \Diff_+(\m S^1)$ which converges uniformly to $\varphi$. As in the proof of Proposition~\ref{prop:quasi-invariance of weighted Liouville measure}, there exists a random distribution $\tilde h$ on $\m S^1$ whose law is mutually absolutely continuous with respect to that of the LGF on $\m S^1$ such that $\tilde h_n:= \Pi(\varphi_n) h + \pi_0(\mathfrak h \circ \varphi_n - \mathfrak h + Q\log|\varphi_n'|)$ converges in law to $\tilde h$ as $n\to \infty$. Noting that $\mathrm{Cov}(\langle \tilde h_n,f\rangle, \langle \tilde h_n, g\rangle) = \langle\Pi(\varphi_n)^* f, \Pi(\varphi_k)^* g\rangle$ for any $f,g\in \mc H_0$, a proof analogous to that of Proposition~\ref{prop:quasi-invariance of weighted Liouville measure} gives $\varphi \in \WP(\m S^1)$. Then, $\tilde h_n$ converges in law to $\Pi(\varphi) h + \pi_0(-\alpha\,\mathrm{Re}\, u_\varphi(\cdot,1) + Q\log|\varphi'|)$, and the absolute continuity of its law with respect to LGF implies $u_\varphi (\cdot,1)\in H^{1/2}(\m S^1)$.
\end{proof}

\section{Quasi-invariance of SLE welding}\label{section: main result}
In this section, we introduce the space $\CR(\m S^1)$ of  conformally removable weldings. We endow it with a topology so that composition gives a continuous group action of quasisymmetric circle homeomorphisms $\QS(\m S^1)$ on $\CR(\m S^1)$ (Lemma \ref{lem:removability is QS-invariant under composition}). This is due to the definition of quasi-invariance. For a measure $\ms P$ on a measurable space $(S,\mc F)$, let $G$ be a group that acts (left or right) measurably on $S$. We say that $\ms P$ is \textit{quasi-invariant} under the action of $G$ if for each $g \in G$ and $A \in \mc F$, we have $\ms P(A)=0$ if and only if $\ms P(gA)=0$. That is, the pull-back of $\ms P$ by $g$ is mutually absolutely continuous with respect to $\ms P$. We refer the reader to \cite{quasiinvariant_book} for more background. In particular, we show the measurability of the group action of Weil--Petersson homeomorphisms on SLE weldings.

After that, we introduce the SLE loop shape measure and find the law of the corresponding welding homeomorphism (see Lemma~\ref{lem:welding-homeo}), which leads to the proof of Theorem~\ref{thm:main}.
\subsection{Conformal welding of Jordan curves}\label{section:removability}
      We say that a compact set $K$ is \textit{(quasi)conformally removable} if any homeomorphism of $\hat{\m C}$ that is (quasi)conformal off $K$ is (quasi)conformal on $\hat{\m C}$. 
     An easy application of the measurable Riemann mapping theorem shows that $K$ is conformally removable if and only if it is quasiconformally removable. We refer the reader to the survey \cite{You15} for an in-depth look at conformal removability.

It is well known that if a Jordan curve $\eta$ is conformally removable, then the corresponding welding homeomorphism is unique up to pre- and post-compositions by $\mob(\m S^1)$. Given an oriented Jordan curve $\eta$, let $f:\m D \to \O$ and $g:\m D^* \to \O^*$ be any conformal maps onto the components of $\Chat \setminus \eta$ on the left and the right of the curve, respectively. Suppose $\tilde f:\m D \to \tilde \O$ and $\tilde g: \m D \to \tilde \O^*$ is another pair of conformal maps such that $\tilde \eta:= \Chat \setminus (\tilde \O \cup \tilde \O^*)$ is a Jordan curve and $(\tilde g^{-1} \circ \tilde f)|_{\m S^1} = (g^{-1}\circ f)|_{\m S^1}$. Then, we can define a homeomorphism of $\Chat$ given by $\tilde f \circ f^{-1}$ on $\overline{\O}$ and $\tilde g \circ g^{-1}$ on $\overline{\O^*}$. Since this homeomorphism is conformal on $\Chat \setminus \eta$, it must be conformal on all of $\Chat$. Hence, $\tilde \eta$ is an image of $\eta$ under a M\"obius transformation as desired. However, there is no analytic characterization of weldings corresponding to conformally removable curves \cite{CR_hard}.  

Recall that For a compact set $K$ in $\hat{\m C}$, \textit{Hausdorff dimension} $d_H(K)$ is defined to be 
 \begin{equation*}
    \inf\{\e \geq 0| \inf_{D \in \ms D } \sum_{\mr a \in \mc A} r_\mr a^\epsilon=0\},
 \end{equation*}
 where $\ms D$ is the family of finite coverings $D=\{D_{\mr a}\}_{\mr a \in \mc A}$ indexed by a set $\mc A$ of $K$ by discs $D_\mr a$ with radius $r_\mr a$.  
 \begin{lem} \label{lem:removability is QS-invariant under composition}
    Let $\CR(\m S^1)$ denote the space of orientation-preserving circle homeomorphisms that are weldings of conformally removable Jordan curves. If $\varphi \in \QS(\m S^1)$ and $\psi \in \CR(\m S^1)$, then $\varphi^{-1} \circ \psi$ are $\psi \circ \varphi$ are in $\CR(\m S^1)$. If we further assume that $\varphi \in \Sy (\m S^1) $, then the three curves that solve the welding problem for $\psi$, $\psi\circ\varphi$ and $\varphi^{-1} \circ\psi$ share the same Hausdorff dimension.
  \end{lem}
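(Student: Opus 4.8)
The plan is to deduce the first assertion by ``straightening'' the composed welding with a quasiconformal homeomorphism of the sphere, and the second by controlling how much that homeomorphism can distort Hausdorff dimension. Write $\psi=(g^{-1}\circ f)|_{\m S^1}$, where $f\colon\m D\to\Omega$ and $g\colon\m D^*\to\Omega^*$ are conformal maps onto the two complementary components of a conformally removable Jordan curve $\eta$. Since $\varphi\in\QS(\m S^1)$, by Beurling--Ahlfors it extends to a quasiconformal self-homeomorphism $\omega$ of $\m D$ with $\omega|_{\m S^1}=\varphi$. Using the measurable Riemann mapping theorem I would choose a quasiconformal homeomorphism $h$ of $\Chat$ that is conformal on $\Omega^*$ and whose Beltrami coefficient on $\Omega$ equals that of $\omega\circ f^{-1}$; by the composition rule for Beltrami coefficients, $h\circ f\circ\omega^{-1}\colon\m D\to h(\Omega)$ and $h\circ g\colon\m D^*\to h(\Omega^*)$ are then conformal, so $h(\eta)$ is a Jordan curve whose welding via these maps is $(g^{-1}\circ f\circ\omega^{-1})|_{\m S^1}=\psi\circ\varphi^{-1}$. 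As $\varphi^{-1}\in\QS(\m S^1)$ too, this realizes $\psi\circ\varphi$ as the welding of a quasiconformal image of $\eta$; the symmetric construction on the $\Omega^*$-side (extending $\varphi$ quasiconformally to $\m D^*$) does the same for $\varphi^{-1}\circ\psi$.

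It then remains to show that a quasiconformal image of a conformally removable set is conformally removable. If $\Phi$ is a homeomorphism of $\Chat$ conformal off $h(\eta)$, then $\Phi\circ h$ is quasiconformal off $\eta$, hence --- since conformal removability is equivalent to quasiconformal removability --- quasiconformal on all of $\Chat$, so $\Phi=(\Phi\circ h)\circ h^{-1}$ is quasiconformal on $\Chat$. Because conformally removable sets have zero area (they cannot support a nontrivial Beltrami coefficient) and quasiconformal maps are absolutely continuous, both $\eta$ and $h(\eta)$ are null; thus the Beltrami coefficient of $\Phi$ vanishes almost everywhere and $\Phi$ is conformal. Hence $h(\eta)$ is conformally removable and $\psi\circ\varphi,\varphi^{-1}\circ\psi\in\CR(\m S^1)$.

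For the dimension claim, assume in addition $\varphi\in\Sy(\m S^1)$ and take the extension $\omega$ with $\mu_\omega(z)\to0$ as $|z|\to1$; by reflection $\varphi$ has such an extension to $\m D^*$ as well, so in each of the three cases the straightening map $h$ is conformal on one complementary component of $\eta$ and has $|\mu_h|\to0$ as its variable approaches $\eta$ from the other component (composing with a conformal uniformizing map does not change the modulus of a Beltrami coefficient). Consequently, for every $\epsilon>0$ there is an open neighborhood $U_\epsilon$ of the compact curve $\eta$ on which $h$ is $K_\epsilon$-quasiconformal with $K_\epsilon\to1$ as $\epsilon\to0$; a localized Mori-type estimate then gives that $h$ is $(1/K_\epsilon)$-H\"older on a neighborhood of $\eta$, whence $d_H(h(\eta))\le K_\epsilon\,d_H(\eta)$. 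Letting $\epsilon\to0$ gives $d_H(h(\eta))\le d_H(\eta)$; applying the same bound to $h^{-1}$, whose Beltrami coefficient also tends to $0$ near $h(\eta)$, yields the reverse inequality. Since Hausdorff dimension is M\"obius-invariant it is independent of the choice of welding curve, so the curves for $\psi$, $\psi\circ\varphi$, and $\varphi^{-1}\circ\psi$ have equal Hausdorff dimension.

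The main obstacle will be this dimension-preservation step: one must upgrade ``$\mu_\omega\to0$ at $\m S^1$'' into genuinely \emph{uniform} control of the dilatation of $h$ on a full neighborhood of the compact curve $\eta$, and then invoke the appropriate localized quasiconformal distortion estimate (a local version of Mori's theorem, or Astala's area-distortion theorem) with the correct dependence of the H\"older exponent on the dilatation. By comparison, the removability bookkeeping and the $\varphi$-versus-$\varphi^{-1}$ substitutions are routine.
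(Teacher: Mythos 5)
Your proposal is correct and follows essentially the same route as the paper: straighten the composed welding with a quasiconformal map of $\Chat$ whose Beltrami coefficient is the pullback of $\mu_\omega$ supported on one complementary component of $\eta$, transfer removability by pre-composing with that map, and use asymptotic conformality near $\eta$ (from $\varphi\in\Sy(\m S^1)$) to get dimension preservation. The only material difference is in the last step, where you invoke a localized Mori/H\"older bound giving $d_H(h(\eta))\le K_\epsilon\, d_H(\eta)$ while the paper uses Astala's sharper estimate $(1/d-1/2)/K \le 1/\tilde d - 1/2 \le K(1/d-1/2)$; since $K_\epsilon\to 1$, both yield the same conclusion.
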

   \begin{proof}
      Given $\varphi \in \QS(\m S^1)$, choose a quasiconformal extension $\o$ of $\varphi^{-1}$ to $\Chat$ whose Beltrami coefficient $\mu={\partial_{\bar{z}} \omega}/{\partial_z \omega }$ satisifies the symmetry condition $\mu(z) = (z/\bar z)^2\bar \mu(1/\bar z)$.
 For a conformally removable Jordan curve $\eta$ solving the welding problem for $\psi$, let $f : \m D \rightarrow \Omega$ and $g : \m D^* \rightarrow \Omega^*$ denote the conformal maps onto the bounded and unbounded complementary components of $\eta$, respectively. Let $\tilde \o$ be a quasiconformal homeomorphism of $\Chat$ that fixes $0$, $1$ and $\infty$, whose Beltrami coefficient is given by
 \begin{equation} \label{eq:group action Beltrami}
    \tilde{\mu}=
\begin{cases}
\left(  \mu {{f'}}/{\bar{f'}}  \right) \circ f^{-1} & ~\text{in}~ \Omega,\\
~0  & ~\text{otherwise}.
\end{cases}
 \end{equation}
 We define as above so that $\tilde{f}=\tilde{\omega} \circ f \circ \omega^{-1}: \m D \rightarrow \tilde{\omega}(\Omega) $ and $\tilde{g}=\tilde{\omega}\circ g : \m D^* \rightarrow \tilde{\omega}(\Omega^*) $  are exactly the conformal maps corresponding to the Jordan curve $\tilde{\eta}=\tilde{\omega}(\eta)$, whose welding homeomorphism is $\tilde{\psi} := (\tilde g^{-1} \circ \tilde f)|_{\m S^1} =\psi \circ \varphi$. To see that $\tilde{\eta}$ is quasiconformally removable, note that given a homeomorphism $F$ of $\hat{\m C}$ which is quasiconformal off $\tilde{\eta}$, we have that $\tilde{F}= F \circ \tilde{\omega}$ is a homeomorphism of $\hat{\m C}$ which is quasiconformal off $\eta$. Since $\eta$ is (quasi)conformally removable, $F$ is quasiconformal on $\hat{\m C}$ and it follows that $\tilde{F}$ is quasiconformal on $\hat{\m C}$. This shows that if $\psi\in \CR (\m S^1)$ and $\varphi \in \QS(\m S^1)$, then $\psi \circ \varphi \in \CR(\m S^1)$. 

 Let us further assume that $\varphi \in \Sy(\m S^1)$. Let $d$ and $\tilde{d}$ denote the Hausdorff dimensions of $\eta$ and $\tilde{\eta} = \tilde \o(\eta)$, respectively. From classical distortion estimates for quasiconforaml maps \cite{Ast94_hausdorff_dimension_area}, we have
 \begin{equation*}
    ( 1/d-1/2)/K \leq 1/\tilde{d}-1/2  \leq K( 1/d-1/2)
 \end{equation*}
where $K$ is the supremum of dilatation of $\tilde{\omega}$ near $\eta$. This is asymptotically equal to $1$, as $\tilde{\omega}$ is asymptotically conformal near $\eta$. More precisely, we have $K_r \rightarrow 1$ as $r \uparrow 1$, where $K_r$ is the supremum of dilatation of $\tilde{\omega}$ restricted on $\m C \backslash f(r\m D)$.  Therefore, we have $\tilde{d}=d$. 

The analogous results for $\varphi^{-1} \circ \psi$ follow in a similar manner, taking $\tilde \o$ to be a quasiconformal homeomorphism of $\Chat$ where its Beltrami coefficient agrees with the pullback of $\mu$ by $g$ in $\Omega^*$ and $0$ otherwise. 
\end{proof}

 \begin{remark}\label{rem:more fractional dimension}
    We may replace the Hausdorff dimension in Lemma \ref{lem:removability is QS-invariant under composition} by the upper Minkowski dimension, the packing dimension, Assouad dimension, etc., as they share the similar dilatation-dependent distortion bounds. See \cite{CT23_Assouad_Minkowski_dimension_distortion} and the discussion therein. 
 \end{remark}
 Now we define the post-composition $R:\CR(\m S^1)\times\QS(\m S^1)\rightarrow \CR(\m S^1)$ by $R(\psi,\varphi)=\psi\circ \varphi$ and the pre-composition $L:\CR(\m S^1)\times\QS(\m S^1)\rightarrow \CR(\m S^1)$ by $R(\psi,\varphi)=\varphi^{-1}\circ \psi$. Recall that we the topology on $\QS(\m S^1)$ is induced from the Teichm\"uller distance. We define the topology on $\CR(\m S^1)$ by the Carath\'eodory topology on the two conformal maps via conformal welding. More precisely, we say $\psi_n\rightarrow\psi$ if there exist welding solutions $\psi_n =g_n^{-1}\circ f_n$ and $\psi=g^{-1}\circ f$ such that $f_n \rightarrow f$ and $g_n \rightarrow g $ uniformly on compact subsets.
\begin{prop}\label{continuous group action}
    The post-composition and pre-composition are continuous.
\end{prop}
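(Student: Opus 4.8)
The plan is to verify sequential continuity --- the topology on $\CR(\m S^1)$ is prescribed through sequences --- by running the quasiconformal surgery from the proof of Lemma~\ref{lem:removability is QS-invariant under composition} in a one-parameter family and controlling it with the stability of the measurable Riemann mapping theorem under convergence of Beltrami coefficients. Fix $\varphi\in\QS(\m S^1)$, $\psi\in\CR(\m S^1)$ and sequences $\varphi_n\to\varphi$ in $\QS(\m S^1)$, $\psi_n\to\psi$ in $\CR(\m S^1)$; the goal is $\psi_n\circ\varphi_n\to\psi\circ\varphi$ (the pre-composition $L$ being identical, using the variant of \eqref{eq:group action Beltrami} supported on the unbounded complementary component). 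By definition of the topology on $\CR(\m S^1)$ we may take welding representatives $\psi_n=g_n^{-1}\circ f_n$ and $\psi=g^{-1}\circ f$ with $f_n\to f$ and $g_n\to g$ locally uniformly, where $f:\m D\to\Omega$ and $g:\m D^*\to\Omega^*$; after a harmless M\"obius normalization we may assume $0\in\Omega$, $\infty\in\Omega^*$ (so that $\Omega$ and the $\Omega_n:=f_n(\m D)$ are bounded), and that $\eta:=\Chat\setminus(\Omega\cup\Omega^*)$ and the $\eta_n$ avoid $1$. For the quasiconformal extension of $\varphi_n^{-1}$ entering the surgery I would use the Douady--Earle extension $\omega_n$: its Beltrami coefficient $\mu_n$ is real-analytic (in particular continuous) on $\m D$, the dilatations of the $\omega_n$ stay uniformly bounded (the $\varphi_n$ lie in a bounded Teichm\"uller ball, hence are uniformly quasisymmetric), $\omega_n\to\omega$ locally uniformly, and $\mu_n\to\mu_\omega$ locally uniformly on $\m D$, because the Douady--Earle extension together with its first derivatives depends continuously on the boundary homeomorphism and $\varphi_n^{-1}\to\varphi^{-1}$ uniformly.

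Next I would push this data through the surgery. Following \eqref{eq:group action Beltrami}, set $\tilde\mu_n:=(\mu_n\,f_n'/\overline{f_n'})\circ f_n^{-1}$ on $\Omega_n$ and $\tilde\mu_n:=0$ elsewhere, let $\tilde\omega_n$ be the quasiconformal homeomorphism of $\Chat$ with Beltrami coefficient $\tilde\mu_n$ fixing $0,1,\infty$, and put $\tilde f_n:=\tilde\omega_n\circ f_n\circ\omega_n^{-1}$, $\tilde g_n:=\tilde\omega_n\circ g_n$; as in Lemma~\ref{lem:removability is QS-invariant under composition} these are the conformal maps of the Jordan curve $\tilde\eta_n:=\tilde\omega_n(\eta_n)$ and $(\tilde g_n^{-1}\circ\tilde f_n)|_{\m S^1}=\psi_n\circ\varphi_n$. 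Applying the Carath\'eodory kernel theorem to both complementary components yields $f_n^{-1}\to f^{-1}$ and $f_n'/\overline{f_n'}\to f'/\overline{f'}$ locally uniformly, the eventual containment of the $\Omega_n$ in a fixed disk, and $\mathbf 1_{\Omega_n}\to\mathbf 1_\Omega$ pointwise off $\eta$. Since $\mu_n\to\mu_\omega$ locally uniformly on $\m D$ and $\mu_\omega$ is continuous there, one gets $\tilde\mu_n\to\tilde\mu$ locally uniformly on $\Omega$; because $\tilde\mu_n$ and $\tilde\mu$ vanish off $\overline{\Omega_n}$ and $\overline\Omega$ and $\eta$ has zero area (discussed below), this upgrades to $\tilde\mu_n\to\tilde\mu$ in measure on $\Chat$ with a uniform $L^\infty$-bound strictly below $1$. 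The stability of the measurable Riemann mapping theorem then gives $\tilde\omega_n\to\tilde\omega$ locally uniformly on $\Chat$; combined with $\omega_n^{-1}\to\omega^{-1}$ and the convergence of $f_n,g_n$, a chain-rule computation gives $\tilde f_n\to\tilde f$ and $\tilde g_n\to\tilde g$ locally uniformly, which is exactly $\psi_n\circ\varphi_n\to\psi\circ\varphi$ in $\CR(\m S^1)$.

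The hard part is the convergence $\tilde\mu_n\to\tilde\mu$ in a neighborhood of the welding curve, and two observations make it go through. First, a conformally removable Jordan curve has zero Lebesgue area: otherwise a small Beltrami coefficient supported on it would solve to a non-M\"obius homeomorphism of $\Chat$ that is conformal off the curve, contradicting removability. This forces $|\Omega_n\triangle\Omega|\to 0$, so the symmetric differences $\Omega_n\triangle\Omega$ and the null curves themselves may be discarded when passing to convergence in measure. Second, it is essential to use an extension $\omega_n$ of $\varphi_n^{-1}$ whose Beltrami coefficient is \emph{continuous} on $\m D$ (whence the Douady--Earle choice): for a generic quasiconformal extension the coefficient is merely in $L^\infty$, and then $\mu_n\circ f_n^{-1}$ need not converge even though $f_n^{-1}\to f^{-1}$ locally uniformly. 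The various normalizations should cause no difficulty, as every map involved ($f_n$, $g_n$, the Douady--Earle extensions, and the normalized solutions of the Beltrami equation) is canonically pinned down and converges accordingly.
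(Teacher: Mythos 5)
Your proposal is correct and follows the same strategy as the paper's proof: run the quasiconformal surgery of Lemma~\ref{lem:removability is QS-invariant under composition} along the sequences, show the transplanted Beltrami coefficients $\tilde\mu_n$ converge, and invoke the stability of the measurable Riemann mapping theorem for the normalized solutions $\tilde\o_n$, then conclude by composing. The only genuine divergence is in how the convergence $\tilde\mu_n\to\tilde\mu$ is secured: the paper chooses extensions with $\mu_n\to\mu$ in $L^\infty$ (available by definition of the Teichm\"uller metric) and asserts a.e.\ convergence of $(\mu_n f_n'/\overline{f_n'})\circ f_n^{-1}$, a step that is slightly delicate since for merely measurable $\mu$ the compositions $\mu\circ f_n^{-1}$ need not converge pointwise (they do converge in measure, which is what the MRMT stability actually requires); you instead take Douady--Earle extensions, whose Beltrami coefficients are continuous and converge locally uniformly, and combine this with the observation that conformally removable curves have zero area to get convergence in measure directly. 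Your route costs a little extra input (continuity properties of the Douady--Earle extension) but makes the composition step airtight; the paper's route is shorter but its ``observe that $\tilde\mu_n\to\tilde\mu$ a.e.''\ should really be read as convergence in measure. Either way the argument closes.
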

 \begin{proof}
    Suppose $\psi_n \to \psi$ and $\varphi_n\to \varphi$ in $\CR(\m S^1)$ and $\QS(\m S^1)$, respectively. Let $f_n \rightarrow f$ and $g_n \rightarrow g$ uniformly on compact subsets be the conformal maps corresponding to $\psi_n$ and $\psi$. 
    Let $\mu_n$ and $\mu$ be the Beltrami coefficient of the quasiconformal extension $\o_n$ and $\o$ of $\varphi_n^{-1}$ and $\varphi^{-1}$, respectively. We may choose $\mu_n$ so that $\mu_n\rightarrow \mu$ in the $L^\infty$ norm. Define $\tilde \mu_n$ as in \eqref{eq:group action Beltrami} using $\mu_n$ and $f_n$ instead.

      Observe in the proof of Lemma \ref{lem:removability is QS-invariant under composition} that if $f_n \rightarrow f$ uniformly on compact subsets of $\m D$ and $\mu_n \rightarrow \mu$ almost everywhere, then $\tilde{\mu}_n \rightarrow \tilde{\mu}$ almost everywhere on $\Chat$. Hence, if we choose $\o_n$ and $\tilde \o_n$ to fix $0,1,\infty$, then $\o_n^{-1} \to \o^{-1}$ and $\tilde \o_n \to \tilde \o$ uniformly with respect to the spherical metric \cite[Thm~4.6]{lehto2012univalent}. Thus, $\tilde f_n=\tilde \o_n \circ f_n \circ \o_n^{-1} \to \tilde \o \circ f \circ \o^{-1}=\tilde f$ and $\tilde g_n=\tilde \o_n \circ g_n  \to \tilde \o \circ g=\tilde g$ uniformly on compact subsets. 
      
      The analogous results for $\varphi^{-1} \circ \psi$ follow in a similar manner as before.
 \end{proof}
As we will see in the next subsection, from the perspective of the SLE$_\kappa$ loop measure, it is more appropriate to consider a correspondence between the following spaces.
\begin{itemize}
    \item The space of orientation-presering circle homeomorphisms which arise as weldings of conformally removable Jordan curves that fix 1, denoted $\m S^1\backslash\CR(\m S^1)$.
    \item The space of conformally removable Jordan curves $\eta$ on $\Chat$ separating $0$ from $\infty$ with a unit conformal radius of $\Chat \setminus \eta$ viewed from $0$, denoted $\mc J_\#$.
\end{itemize}

\begin{df}\label{df: unit cr corresp}
Given $\eta \in \mc J_\#$, let $\O$ and $\O^*$ be the bounded and unbounded components of $\Chat \setminus \eta$, respectively. Consider the unique pair of Riemann maps $f:\m D\to \O$ and $g: \m D^* \to \O^*$ satisfying $f(0) =0$, $f'(0)=1$, $g(\infty)=\infty$, and $f(1)=g(1)$. The corresponding element of $\m S^1 \backslash \CR(\m S^1)$ is given by $(g^{-1}\circ f)|_{\m S^1}$. 
\end{df}

Note that this correspondence is one-to-one since the only map $\o \in \mob(\Chat)$ satisfying $\o(0)=0$, $\o'(0)=1$, and $\o(\infty)=\infty$ is the identity. 
We endow $\mc J_\#$ with the Carath\'eodory topology for $\O$ and $\O^*$, which is equivalent to the topology of local uniform convergence for $f$ and $g$. We endow the topology on $\m S^1 \backslash \CR(\m S^1)$ that makes the above correspondence a homeomorphism.

\begin{remark}
   Given a function $\o:\m R_+\to \m R_+$, let $\mc J_\o$ be a subset of $\mc J_\#$ consisting of curves for which the Riemann map $f$ and $g$ as in Definition \ref{df: unit cr corresp} admit $\o$ as a modulus of continuity on the $\m S^1$. Then, the topology on $\mc J_\o$ inherited from $\mc J_\#$ is equivalent to that of uniform convergence of the Riemann map $f$ and $g$, and hence that induced by the Hausdorff distance on compact subsets of $\Chat$. 

   Take $\o(r) = r^\alpha$ for some fixed $\a\in(0,1)$. Then, it is a classic fact that $\mc J_\o$ includes all $K$-quasicircles in $\mc J_\#$ with $K < 1/\a$. Moreover, for $\k_\alpha \in (0,4)$ depending on $\a$, the SLE$_\kappa$ shape measure is supported in $\mc J_{\o}$ for $\k \in (0,\k_\a)$. (See the next subsection.)
\end{remark}

  The correspondence in Definition \ref{df: unit cr corresp}, when restricted to quasisymmetric circle homeomorphisms in $\m S^1 \backslash \CR(\m S^1)$ and quasicircles in $\mc J_\#$, agrees with the models of the universal Teichm\"uller curve $\mc T(1)$ as discussed in \cite[Sec.~I.1.2]{TT06}. Let us consider $\m S^1 \backslash \QS(\m S^1)$ as a right topological group \cite{Ber73} equipped with the topology inherited from $\mc T(1)$: i.e., the one induced from the Teichm\"uller distance.
\begin{prop}\label{continuous group action special point}
    The post-composition and pre-composition are continuous restricted on $\m S^1\backslash \CR(\m S^1) \times \m S^1 \backslash \QS(\m S^1)$.
\end{prop}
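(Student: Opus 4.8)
The plan is to adapt the proof of Proposition~\ref{continuous group action}, carrying along the extra normalization of Definition~\ref{df: unit cr corresp}. First note that the maps are well defined on the restricted domain: if $\psi\in\m S^1\backslash\CR(\m S^1)$ and $\varphi\in\m S^1\backslash\QS(\m S^1)$, then $\psi\circ\varphi$ and $\varphi^{-1}\circ\psi$ lie in $\CR(\m S^1)$ by Lemma~\ref{lem:removability is QS-invariant under composition} and fix $1$, hence represent elements of $\m S^1\backslash\CR(\m S^1)$.

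Now take $\psi_n\to\psi$ in $\m S^1\backslash\CR(\m S^1)$ and $\varphi_n\to\varphi$ in $\m S^1\backslash\QS(\m S^1)$. By definition of the topology on $\m S^1\backslash\CR(\m S^1)$, the Riemann maps $f_n\colon\m D\to\Omega_n$ and $g_n\colon\m D^*\to\Omega_n^*$ attached to $\psi_n$ by Definition~\ref{df: unit cr corresp} converge locally uniformly to those of $\psi$; in particular $f_n(0)=0$, $f_n'(0)=1$, $g_n(\infty)=\infty$ for every $n$. Since $\varphi_n^{-1}$ fixes $1$, one may run the construction from the proofs of Lemma~\ref{lem:removability is QS-invariant under composition} and Proposition~\ref{continuous group action} while additionally taking the quasiconformal extension $\omega_n$ of $\varphi_n^{-1}$ to fix $0$ and $\infty$ (the auxiliary map $\tilde\omega_n$ of \eqref{eq:group action Beltrami} already fixes $0,1,\infty$ by construction); as there, these can be chosen with converging Beltrami coefficients, so that $\omega_n^{-1}\to\omega^{-1}$ and $\tilde\omega_n\to\tilde\omega$ uniformly and hence $\tilde f_n:=\tilde\omega_n\circ f_n\circ\omega_n^{-1}$ and $\tilde g_n:=\tilde\omega_n\circ g_n$, which are conformal maps for the curve welded by $\psi_n\circ\varphi_n$, converge locally uniformly to $\tilde f:=\tilde\omega\circ f\circ\omega^{-1}$ and $\tilde g:=\tilde\omega\circ g$. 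The key observation is that, because $\omega_n(0)=0$ and $\tilde\omega_n$ fixes $0$ and $\infty$ while $f_n,g_n$ are normalized, one has $\tilde f_n(0)=\tilde\omega_n(f_n(0))=0$ and $\tilde g_n(\infty)=\tilde\omega_n(g_n(\infty))=\infty$, and $\tilde f_n(1)=\tilde g_n(1)$ since $\psi_n\circ\varphi_n$ fixes $1$; thus the only normalization of Definition~\ref{df: unit cr corresp} that $\tilde f_n$ can fail is $\tilde f_n'(0)=1$, which is repaired by the dilation $z\mapsto z/\tilde f_n'(0)$. Because $\tilde f_n'(0)\to\tilde f'(0)\neq 0$ (as $\tilde f$ is conformal and $\tilde f(0)=0$), the rescaled maps $\tilde f_n/\tilde f_n'(0)$ and $\tilde g_n/\tilde f_n'(0)$ converge locally uniformly to $\tilde f/\tilde f'(0)$ and $\tilde g/\tilde f'(0)$; as these are precisely the Definition~\ref{df: unit cr corresp} Riemann maps of $\psi_n\circ\varphi_n$ and $\psi\circ\varphi$, we obtain $\psi_n\circ\varphi_n\to\psi\circ\varphi$ in $\m S^1\backslash\CR(\m S^1)$. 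Pre-composition is handled in the same way, taking $\tilde\omega_n$ to carry the pullback of the Beltrami coefficient of $\omega_n$ through $g_n$ on $\Omega_n^*$ and $0$ elsewhere, as in Lemma~\ref{lem:removability is QS-invariant under composition}.

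The proof introduces no analytic difficulty beyond Proposition~\ref{continuous group action}. The one substantive ingredient — that $\varphi_n\to\varphi$ in the Teichm\"uller curve topology on $\m S^1\backslash\QS(\m S^1)$ permits a choice of extensions with converging Beltrami coefficients, and hence $\omega_n^{-1}\to\omega^{-1}$ and $\tilde\omega_n\to\tilde\omega$ uniformly — is exactly the step already carried out in the proof of Proposition~\ref{continuous group action}, now applied to the representatives fixing $1$ via the identification of $\m S^1\backslash\QS(\m S^1)$ with $\mc T(1)$ in \cite[Sec.~I.1]{TT06} and \cite{Ber73}. The only additional work, and the single place where fixing $1$ is used, is the normalization bookkeeping above: arranging that all of the maps respect the marked point, so that after the harmless rescaling by $1/\tilde f_n'(0)$ the Definition~\ref{df: unit cr corresp} normalization is preserved along the sequence.
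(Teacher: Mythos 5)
Your proof is correct and follows essentially the same route as the paper: both reduce the problem to Proposition~\ref{continuous group action} plus normalization bookkeeping, observing that the Definition~\ref{df: unit cr corresp} Riemann maps for $\psi_n\circ\varphi_n$ differ from $\tilde\omega_n\circ f_n\circ\omega_n^{-1}$ and $\tilde\omega_n\circ g_n$ only by the dilation $z\mapsto z/(\tilde\omega_n\circ f_n\circ\omega_n^{-1})'(0)$, whose scale factor converges. Your write-up is somewhat more explicit than the paper's about which normalizations are automatically preserved (fixing $0$, $\infty$, and the matching at $1$), but the argument is the same.
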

  \begin{proof}
      We only need to take care of the normalization. Note that if $\tilde f_n$ and $\tilde g_n$ are the conformal maps corresponding to $\psi_n \circ \varphi_n$ as in Definition~\ref{df: unit cr corresp}, then they differ from $\tilde \o_n \circ f\circ \o_n^{-1}$ and $\tilde \o_n \circ g$ by a post-composition by $z\mapsto c_nz$ where $1/c_n = (\tilde \o_n \circ f \circ \o_n^{-1})'(0)$. Since $(\tilde \o_n \circ f \circ \o_n^{-1})'(0) \to (\tilde \o \circ f \circ \o^{-1})'(0)$, we conclude that $\tilde f_n \to \tilde f$ and $\tilde g_n \rightarrow \tilde g$ uniformly on compact subsets. The analogous results for $\varphi^{-1} \circ \psi$ follow in a similar manner as before.
  \end{proof}
  Some homeomorphisms are not proven to be weldings, where we consider the measure on homeomorphisms and endow the topology of uniform convergence. Then the post-composition and pre-composition are still continuous.
\subsection{SLE loop welding measure} 
For $\kappa \in (0,4]$, the $\SLE_\kappa$ loop measure $\mu^{\kappa}$ is a $\sigma$-finite measure on the space of Jordan curves on $\widehat{\m C}$ satisfying the following properties as defined by Kontsevich and Suhov \cite{KS07}.
\begin{itemize}
    \item Generalized restriction property: For any simply connected domain $D \subsetneqq \Chat$, we define $\mu_D^\k$ by
    \begin{equation*}
        \frac{\dd \mu_D^\kappa}{\dd \mu^\kappa}(\cdot) = \mathbf{1}_{\{\cdot \subset D\}}\exp\left( c(\kappa)\Lambda^*(\cdot,D^c)/2\right)
    \end{equation*}
    where $c(\kappa)=(6-\kappa)(3\kappa-8)/2\kappa$ is the central charge of SLE$_\k$ and $\Lambda^*(\eta,D^c)$ is the size of the normalized Brownian loop measure for loops hitting both $\eta$ and $D^c$.
    \item Conformal invariance: If $f:D \rightarrow D'$ is a conformal map between two subdomains of $\widehat{\m C}$, then the pushforward measure $f_*\mu_D^\kappa$ is agrees with $\mu_{D'}^\kappa$.
\end{itemize}

For each $\kappa \in (0,4]$, the SLE$_\kappa$ loop measure $\mu^\kappa$ exists and is unique up to a multiplicative constant. This was proved by Werner \cite{werner_measure} for $\kappa = 8/3$; the measure $\mu^\kappa$ for the entire range of $\kappa \in (0,4]$ was constucted by Zhan \cite{zhan2020sleloop} and was proved to be unique by Bav4rez and Jego \cite{bj_cft_loop}.

From the perspective of conformal welding, it is natural to consider the \textit{shape measure} of the $\SLE_\kappa$ loops. First, consider the restriction $\mu^\kappa_{\m C\setminus \{0\}}$ of the SLE$_\kappa$ loop measure $\mu^\kappa$ to the loops that separate 0 and $\infty$. Given a Jordan curve $\eta$ separating 0 and $\infty$, let $\mathrm{CR}(\eta,0)$ denote the conformal radius $|f'(0)|$ of the bounded component $D_\eta$ of $\widehat{\m C}\setminus \eta$ viewed from 0 where $f:\m D \to D_\eta$ is a conformal map with $f(0) = 0$. 
Then, we define $\mu_{\#}^\kappa$ to be the conditional law\footnote{One way to give this law is to consider the restriction of $\mu_{\m C\setminus \{0\}}^\kappa$ to loops $\eta$ with $1/2\leq\mathrm{CR}(\eta,0) \leq 1$, which is a finite measure since it must wind around the disk $\frac{1}{8} \m D$ and intersect the unit circle $\partial \m D$ \cite{zhan2020sleloop}. Scaling each loop $\eta$ sampled under this measure using the map $z\mapsto z/\mathrm{CR}(\eta,0)$ gives a loop sampled from $\mu_{\#}^\kappa$ by the conformal invariance of the SLE$_\kappa$ loop measure.} of $\mu^\kappa_{\m C\setminus \{0\}}$ on the set of loops $\eta$ for which $\mathrm{CR}(\eta,0) = 1$. We may choose the multiplicative constant for the SLE$_\kappa$ loop measure $\mu^\kappa$ such that the shape measure $\mu_{\#}^\kappa$ is a probability measure; we assume this henceforth.
\begin{df}\label{df:SLE welding}
    For $\kappa \in (0,4]$, we define $\SLE_\kappa^{\mathrm{weld}}$ to be the probability measure on $\m S^1 \backslash \CR(\m S^1)$ given by the pushforward of the SLE$_\kappa$ loop shape measure $\mu_\#^\kappa$ under the measurable one-to-one correspondence $\mc J_\#^{0,\infty} \to \m S^1 \backslash\CR(\m S^1)$ induced by conformal welding. 
\end{df}
\subsection{Conformal welding of quantum disks}
Fix $\gamma \in (0,2)$ and let $\kappa = \gamma^2$.
Building upon Sheffield's quantum zipper \cite{Quantum_zipper}, Ang, Holden, and Sun proved in \cite{AHS23} that conformally welding two independent $\gamma$-LQG disks gives a quantum sphere decorated with an independent SLE$_{\k}$ loop. We give a translation of this result in terms of the welding homeomorphism of two independent LGFs on $\m S^1$.

Recall that given an LGF $h$ on $\m S^1$, we let $\widehat{\mc M}_h $ to be the $\gamma$-GMC measure $\mc M_h = \mc M_h^\g$ corresponding to $h$ normalized to have unit mass. Define $\phi_h=\phi_h^\g\in \Homeo_+(\m S^1)$ by
\begin{equation*}
    \phi_h^\gamma(z):= \exp(2\pi \ii \cdot \widehat {\mc M}_h^\gamma([1,z])),
\end{equation*}
for each $z \in \m S^1$ where $[1,z] \subset \m S^1$ denotes the arc running counterclockwise from 1 to $z$. That is, the pushforward of the normalized GMC measure $\widehat{\mc M}_h$ under the homeomorphism $\phi_h$ is the uniform (arc-length) measure on $\m S^1$.
The following is the main result of this subsection.

\begin{lem}\label{lem:welding-homeo}
    Let $\gamma \in (0,2)$ and $\kappa = \gamma^2 \in (0,4)$. Suppose $h_1,h_2$ are independent LGFs on $\m S^1$ and $\alpha$ is a uniform rotation of $\m S^1$ independent from $h_1$ and $h_2$. Let $\mc W^\kappa$ denote the law of $(\phi_{h_2}^\g)^{-1} \circ \alpha \circ \phi_{h_1}^\g \in \Homeo_+(\m S^1)$. Then, the pushforward of $\mc W^\kappa$ under the natural projection $\Homeo_+(\m S^1) \to \m S^1\backslash\Homeo_+(\m S^1)$ is equivalent to $\SLE_\kappa^{\mathrm{weld}}$. Furthermore, if we identify $\m S^1 \backslash \Homeo_+(\m S^1)$ with the stabilizers of 1 in $\Homeo_+(\m S^1)$, then $\SLE_\kappa^{\mathrm{weld}}$ is equivalent to the law of $(\phi_{h_2 -\gamma\log |\cdot - 1|}^\g)^{-1} \circ \phi_{h_1}^\g$.
\end{lem}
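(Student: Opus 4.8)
The plan is to unpack the conformal welding result of Ang--Holden--Sun \cite{AHS23} and record precisely what it says about the welding homeomorphism, then reconcile the two marked-point conventions (uniform rotation versus stabilizer of $1$). First I would recall the statement of the quantum disk welding: if $\mc D_1$ and $\mc D_2$ are two independent quantum disks (with the appropriate weights so that the total boundary lengths match in law), then welding them along their boundaries according to quantum length produces a quantum sphere decorated by an independent $\SLE_\kappa$ loop, where $\kappa = \gamma^2$. The key point is that, in the coordinates of the unit disk $\m D$ and its exterior $\m D^*$, a quantum disk with a marked interior point (and quantum area or conformal radius normalized) is parametrized by an LGF plus the $\gamma$-log-singularity at the interior point; restricting to the boundary circle, the field $\mathfrak h(z) = -\gamma \log|z-1|$ appearing in the statement is exactly the trace of such a singular field when we use a boundary marked point at $1$ after conformally mapping. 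The welding is carried out by first conformally mapping each disk so that quantum length becomes a constant multiple of arc length, which is precisely what $\phi_{h_1}^\gamma$ and $\phi_{h_2 - \gamma\log|\cdot-1|}^\gamma$ do: the former sends the normalized GMC of $h_1$ to uniform measure, the latter does the same for the singular field. Identifying the welding homeomorphism of the resulting Jordan curve as $(\phi_{h_2 - \gamma\log|\cdot-1|}^\gamma)^{-1} \circ \phi_{h_1}^\gamma$ is then a matter of chasing definitions through \cite{ang2024sleloopmeasureliouville}, which already phrases the welding of quantum disks with one interior marked point each in a form close to what we need.

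Next I would address the two statements separately. For the first (the uniform rotation version $\mc W^\kappa$), the symmetric roles of $h_1$ and $h_2$ together with the rotational invariance of LGF and of the uniform rotation $\alpha$ match the symmetric roles of the two quantum disks. The extra rotation $\alpha$ accounts for the fact that when welding two circles one has a free $S^1$ of gluings; after passing to $\m S^1 \backslash \Homeo_+(\m S^1)$ (cosets modulo rotation, equivalently modulo the choice of base point on the welded curve) this ambiguity disappears, and one lands on the coset-valued welding measure, which by Definition~\ref{df:SLE welding} and the conformal welding correspondence $\mc J_\# \to \m S^1\backslash\CR(\m S^1)$ is exactly $\SLE_\kappa^{\mathrm{weld}}$. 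The ``mutually absolutely continuous'' (rather than equal) qualifier absorbs the discrepancy between the precise AHS normalization of quantum disk moduli (quantum area, boundary length) and the conformal-radius normalization used to define the shape measure $\mu_\#^\kappa$; these differ by a random change of the overall additive constant of the field, which only reweights the measure by a Radon--Nikodym derivative.

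For the second statement I would deduce it from the first by conditioning/disintegrating on the marked point. Conjugating $(\phi_{h_2}^\gamma)^{-1}\circ\alpha\circ\phi_{h_1}^\gamma$ by a rotation to fix $1$, and using the coordinate change $\varphi^*\mc M_h = \mc M_{h\circ\varphi + Q\log|\varphi'|}$ from Proposition~\ref{prop:quasi-invariance of weighted Liouville measure} to absorb the rotation into the field, one sees that fixing the image of $1$ on the outer disk corresponds precisely to adding a boundary marked point, i.e., replacing $h_2$ by $h_2 - \gamma\log|\cdot - 1|$ (the $\gamma$-quantum-typical boundary point insertion; compare Corollary~\ref{cor: one marked boundary} and the insertion of one boundary marked point on the quantum disk). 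The main obstacle, I expect, is not any single estimate but bookkeeping: carefully tracking the several normalizations (conformal radius versus quantum area versus quantum boundary length, interior versus boundary marked points, the weight parameters of the quantum disks in \cite{AHS23}) and verifying that each mismatch only changes the law by an equivalent (mutually absolutely continuous) reweighting rather than a singular one. This is where invoking \cite{ang2024sleloopmeasureliouville}, which has already done much of this translation for quantum disks with one interior marked point, will do most of the work; the remaining task is to confirm that their marked-point convention lines up with the base point $1 \in \m S^1$ used throughout this section.
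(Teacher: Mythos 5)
Your proposal is correct and follows essentially the same route as the paper: invoke the quantum-disk welding theorem of \cite{AHS23} in the one-interior-marked-point form of \cite{ang2024sleloopmeasureliouville}, disintegrate over the quantum boundary length and absorb the normalization mismatches (conformal radius versus boundary length, Liouville field versus LGF) into mutual absolute continuity, identify $\phi_{h_1}^\gamma$ and $\phi_{h_2}^\gamma$ as the maps sending quantum length to arc length with the uniform rotation accounting for the gluing/base-point ambiguity, and obtain the stabilizer-of-$1$ version by observing that the image of $1$ is a quantum-boundary-length-typical point of the second disk, whose re-rooting inserts the $\gamma\log|\cdot-1|$ singularity. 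This last step is exactly the paper's Lemma~\ref{lem:add-point} (drawn from \cite{fzz_welding,ang2024sleloopmeasureliouville}), so the plan matches the written proof in all essentials.
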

Note that $(\phi_{h_2}^\g)^{-1} \circ \alpha \circ \phi_{h_1}^\g $ is the circle homeomorphism corresponding to the conformal welding of two disks with boundary length measures $\widehat{\mc M}_{h_1}$ and $\widehat{\mc M}_{h_2}$ where we make a random shift for the point on the second disk glued to 1 on the first disk according to the boundary length measure $\widehat{\mc M}_{h_2}$. This result immediately implies Theorem~\ref{thm:main}.

\begin{proof}[Proof of Theorem~\ref{thm:main}]
    Combine Lemma \ref{lem:welding-homeo} with Proposition \ref{prop:quasi-invariance of weighted Liouville measure}.
\end{proof}

We will obtain Lemma~\ref{lem:welding-homeo} using the conformal welding of independent quantum disks each with one interior marked point as established in \cite{ang2024sleloopmeasureliouville}. To state this result, let us recall some basic definitions from the LQG literature. A $\gamma$-LQG disk with one marked point is defined as the equivalence class $(D,\XX,z)/\sim_\gamma$ of the tuple of a domain $D$ conformally equivalent to the unit disk, a random distribution $h$, and a point $z \in D$, where $(D,\XX,z) \sim_\gamma (\tilde D, \tilde \XX, \tilde z)$ if there exists a conformal map $f:D \to \tilde D$ such that 
\begin{equation}\label{eq:lqg-surface-change}
    \XX = \tilde \XX \circ f + Q\log |f'|
\end{equation}
for $Q = \frac{2}{\gamma}+\frac{\gamma}{2}$ and $\tilde z = f(z)$. We define a $\gamma$-LQG sphere with two marked points and a Jordan curve similarly as an equivalence class $(D,\XX,\eta,z_1,z_2)/\sim_\gamma$ up to conformal transformations where the field $\XX$ satisfies the coordinate change rule \eqref{eq:lqg-surface-change}.

We now define Liouville field on $\m D$. While the Liouville theory on simply connected domains with boundary was stated originally on the unit disk $\m D$ \cite{hrv-disk-lqg}, subsequent works such as \cite{fyodorov-bouchaud-proof,RZ_boundary_lcft,fzz_welding} used the upper half-plane $\m H$ as the standard domain. Nevertheless, Proposition~3.4 and Proposition~3.7 of \cite{hrv-disk-lqg} imply that the (infinite) law we give here agrees with the definitions of Liouville fields on $\m H$ up to a $\gamma$-dependent multiplicative factor. See \cite[Section~5]{RZ_boundary_lcft} for further details.

\begin{df}[{\cite[Lem.~4.4--4.5]{fzz_welding}}]\label{df:liouville-field}
    Let $P_{\m D}$ be the law of the free-boundary GFF on $\m D$ normalized to have zero mean on the boundary $\partial \m D$. Let $P_{{\m C}}$ denote the law of the whole-plane GFF with zero mean on the unit circle.\footnote{This is a centered Gaussian field with covariance kernel $G_{{\m C}}(z,w) = -\log|z-w| + \log|z|_+ + \log|w|_+$.} 

    \begin{enumerate}
    \item For $\ell>0$, define $\mathrm{LF}_{\m D}^{(\gamma,0)}(\ell)$ to be the law of the field $\XX^\ell := \hat \XX  + \frac{2}{\gamma} \log \frac{\ell}{\nu_{\hat \XX}(\partial \m D)}$, where
    \begin{equation}
        \hat \XX:=\XX - \gamma \log |\cdot|
    \end{equation} 
    for the field $\XX$ sampled under the reweighted measure $\ell^{-4/\gamma^2}(\nu_{\hat \XX}(\partial \m D))^{4/\gamma^2-1} P_{\m D}$. Let ${\mc M}_{1,0}^{\mathrm{disk}}(\gamma;\ell)$ be the measure on quantum surfaces $(\m D, \XX^\ell, 0)/\sim_\gamma$, where $\XX^\ell$ is sampled from $\mathrm{LF}_{\m D}^{(\gamma,0)}(\ell)$. This is a finite measure on quantum disks with boundary length $\ell$ and one interior marked point. Define 
    \begin{equation}
        \mathrm{LF}_{\m D}^{(\gamma,0)} = \int_0^\infty \mathrm{LF}_{\m D}^{(\gamma,0)}(\ell)\,d\ell \quad \text{and} \quad \mc M_{1,0}^{\mathrm{disk}}(\gamma) = \int_0^\infty \mc M_{1,0}^{\mathrm{disk}}(\gamma;\ell)\,d\ell.
    \end{equation}

    \item For $\ell>0$, define $\mathrm{LF}_{\m D}^{(\gamma,0),(\gamma,1)}(\ell)$ to be the law of the field $\XX^\ell := \hat \XX + \frac{2}{\gamma} \log \frac{\ell}{\nu_{\hat \XX}(\partial \m D)}$, where
    \begin{equation}
        \hat \XX:=\XX - \gamma \log |\cdot| - \gamma \log|\cdot - 1|
    \end{equation} 
    for the field $\XX$ sampled under the reweighted measure $\ell^{-4/\gamma^2+1}(\nu_{\hat \XX}(\partial \m D))^{4/\gamma^2} P_{\m D}$. 
    \item Let $\mathrm{LF}_{{\m C}}^{(\gamma,0),(\gamma,\infty)}$ denote the law of the field $\XX - \gamma \log |\cdot| - 2(Q-\gamma)\log|\cdot|_+ + c$ where $(\XX,c)$ is sampled from the law $P_{{\m C}}\times [e^{2(\gamma-Q)c}dc]$ and $|z|_+ = \max\{1,|z|\}$.
    \end{enumerate}
\end{df}

We note that $\mathrm{LF}_{\m D}^{(\gamma,0)}(\ell)$ and $\mathrm{LF}_{\m D}^{(\gamma,0),(\gamma,1)}(\ell)$ are finite measures for each $\ell>0$ \cite[Cor.~3.10]{hrv-disk-lqg}. See also \cite{RZ_boundary_lcft} for explicit formulas for their total masses. The following lemma clarifies the relation between these two measures, which follows from combining Definition~2.10, Proposition~3.4, and Proposition~3.9 of \cite{fzz_welding} and Proposition~2.21 of \cite{ang2024sleloopmeasureliouville}.

\begin{lem} \label{lem:add-point}
Given a finite measure $\l$, let $\l^\#$ denote the this law normalized to be a probability measure. Fix $\ell>0$. 
    \begin{enumerate}
        \item  Sample a field $\XX$ on $\m D$ from $\mathrm{LF}_{\m D}^{(\gamma,0)}(\ell)^\#$, then sample $e^{\ii \theta} \in \partial \m D$ from the normalized boundary length measure $\hat\nu_{\XX^\ell}:=\nu_{\XX^\ell}/\nu_{\XX^\ell}(\partial \m D)$. Then, $\XX^\ell (e^{\ii \theta}\cdot)$ has the law $\mathrm{LF}_{\m D}^{(\gamma,0),(\gamma,1)}(\ell)^\#$.

        \item Sample a field $\XX$ on $\m D$ from $\mathrm{LF}_{\m D}^{(\gamma,0),(\gamma,1)}(\ell)^\#$ and independently sample a uniform boundary point $e^{\ii \theta} \in \partial \m D$ (from the normalized arc-length measure on $\partial \m D)$. Then, $\XX^\ell (e^{\ii \theta}\cdot)$ has the law $\mathrm{LF}_{\m D}^{(\gamma,0)}(\ell)^\#$.
    \end{enumerate}
\end{lem}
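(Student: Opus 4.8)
The plan is to assemble the cited results --- Definition~2.10 and Propositions~3.4 and~3.9 of \cite{fzz_welding} together with Proposition~2.21 of \cite{ang2024sleloopmeasureliouville} --- around a single mechanism: Girsanov's theorem applied to the reweighting of the free-boundary GFF by the boundary Liouville measure at a point. Recall that $P_{\m D}$ is the law of the free-boundary GFF on $\m D$ with zero boundary mean, whose covariance kernel restricted to $\partial\m D$ equals $-2\log|\cdot-\cdot|$, so that $\nu_{\XX}(\dd z)=\lim_{\vare\to 0}\vare^{\gamma^2/4}e^{\frac{\gamma}{2}\XX_\vare(z)}\,\dd z$ with $\var(\XX_\vare(z))=-2\log\vare+O(1)$. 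For a fixed boundary point $z$, reweighting $P_{\m D}(\dd\XX)$ by $\vare^{\gamma^2/4}e^{\frac{\gamma}{2}\XX_\vare(z)}$ multiplies the law by the deterministic factor $\vare^{\gamma^2/4}e^{\frac{\gamma^2}{8}\var(\XX_\vare(z))}=e^{O(1)}$, which converges as $\vare\to 0$ to a constant independent of $z$ by rotational symmetry, and shifts the field by $\frac{\gamma}{2}\COV(\XX(\cdot),\XX_\vare(z))\to-\gamma\log|\cdot-z|$. In other words, sampling a boundary point from the Liouville length measure and disintegrating over it turns that point into a $\gamma$-boundary insertion. The exponents of $\ell$ and of $\nu_{\hat\XX}(\partial\m D)$ in Definition~\ref{df:liouville-field} are exactly those which make this disintegration land, after rotation, on $\mathrm{LF}_{\m D}^{(\gamma,0),(\gamma,1)}(\ell)$ with a clean constant; this matching, together with the finiteness of the total masses of $\mathrm{LF}_{\m D}^{(\gamma,0)}(\ell)$ and $\mathrm{LF}_{\m D}^{(\gamma,0),(\gamma,1)}(\ell)$ (which holds by \cite[Cor.~3.10]{hrv-disk-lqg}), is the bookkeeping carried out in \cite[Prop.~3.4, Prop.~3.9]{fzz_welding}, and I would simply invoke it.

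For part~(1), I would sample $\XX\sim\mathrm{LF}_{\m D}^{(\gamma,0)}(\ell)^\#$ and then $e^{\ii\theta}\sim\hat\nu_{\XX^\ell}$, noting that $\nu_{\XX^\ell}(\partial\m D)=\ell$ deterministically (by the coordinate change under the constant shift defining $\XX^\ell$), so that $\hat\nu_{\XX^\ell}$ is the normalized GMC length measure of the background field $\hat\XX$. Applying the Girsanov computation above to the joint law of $(\XX,e^{\ii\theta})$ and disintegrating over $e^{\ii\theta}$, the conditional law of $\XX^\ell$ acquires a $\gamma$-insertion at $e^{\ii\theta}$; precomposing with the rotation $f(z)=e^{\ii\theta}z$ --- which has $|f'|\equiv 1$, so that the $\gamma$-LQG coordinate change \eqref{eq:lqg-surface-change} carries no $Q\log|f'|$ term, fixes the interior insertion at $0$ and the boundary length $\ell$, and moves the insertion from $e^{\ii\theta}$ to $1$ --- yields a field whose conditional law is $\mathrm{LF}_{\m D}^{(\gamma,0),(\gamma,1)}(\ell)^\#$ by \cite[Prop.~3.4]{fzz_welding}. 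Since this conditional law does not depend on $e^{\ii\theta}$, the unconditional law of $\XX^\ell(e^{\ii\theta}\cdot)$ is $\mathrm{LF}_{\m D}^{(\gamma,0),(\gamma,1)}(\ell)^\#$, which is part~(1).

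For part~(2), I would present it as the converse of part~(1). On one hand it follows directly from \cite[Prop.~3.9]{fzz_welding} and \cite[Prop.~2.21]{ang2024sleloopmeasureliouville}, which record that the boundary marked point of $\mathrm{LF}_{\m D}^{(\gamma,0),(\gamma,1)}$ is quantum typical, so that re-randomizing its location uniformly and forgetting it recovers the unmarked measure $\mathrm{LF}_{\m D}^{(\gamma,0)}$. On the other hand it can be deduced from part~(1) using the rotational invariance in law of $\mathrm{LF}_{\m D}^{(\gamma,0)}(\ell)^\#$ --- its only insertion, at $0$, is fixed by rotations and $P_{\m D}$ is rotation invariant --- so that in the joint law of $(\XX^\ell,e^{\ii\theta})$ from part~(1) one may equivalently first sample $\XX^\ell(e^{\ii\theta}\cdot)\sim\mathrm{LF}_{\m D}^{(\gamma,0),(\gamma,1)}(\ell)^\#$ and an independent uniform $\theta$, and then apply the inverse rotation to return to a field of law $\mathrm{LF}_{\m D}^{(\gamma,0)}(\ell)^\#$. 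This is exactly part~(2), and also confirms that parts~(1) and~(2) describe mutually inverse operations.

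The step I expect to be the main obstacle is the normalization bookkeeping underlying part~(1): tracking simultaneously the $z$-independent constant emerging from the Girsanov limit, the deterministic prefactors $\ell^{\mp 4/\gamma^2}$ (with the shift by $1$ in the two-point case) and the $(\nu_{\hat\XX}(\partial\m D))$-powers of Definition~\ref{df:liouville-field}, and the total masses of the two finite Liouville measures, so as to conclude that the disintegrated conditional law is precisely the \emph{probability} measure $\mathrm{LF}_{\m D}^{(\gamma,0),(\gamma,1)}(\ell)^\#$ with no residual constant. As this computation is performed in \cite[Prop.~3.4, Prop.~3.9]{fzz_welding} and \cite[Prop.~2.21]{ang2024sleloopmeasureliouville}, the role of the present proof is to reduce Lemma~\ref{lem:add-point} to those statements via the rotation argument, which introduces no new difficulty.
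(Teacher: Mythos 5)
Your proposal is correct and takes essentially the same route as the paper, which itself offers no proof of this lemma beyond the sentence that it ``follows from combining'' the same four cited results you invoke. Your added sketch of the Girsanov/rooted-measure mechanism and the rotation bookkeeping (including the observation that $\nu_{\XX^\ell}(\partial \m D)=\ell$ deterministically and the derivation of part~(2) from part~(1) via rotational equivariance) is a faithful, somewhat more detailed account of what those references establish.
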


Given quantum disks $\mathcal D_1$ and $\mathcal D_2$, let $\mathrm{Weld}(\mc D_1,\mc D_2)$ denote the \textit{uniform conformal welding} of $\mc D_1$ and $\mc D_2$ as described in \cite[Sec.~2.5]{ang2024sleloopmeasureliouville}. That is, $\mathrm{Weld}(\mc D_1,\mc D_2)$ is a probability measure on the quantum sphere obtained by conformally welding the boundaries of $\mc D_1$ and $\mc D_2$ starting from matching the points sampled independently on each boundary from the normalized boundary length measures. Put otherwise, $\mathrm{Weld}(\mc D_1,\mc D_2)$ is the law of the quantum sphere
$(\widehat {\m C}, \XX, \eta, 0,\infty)/\sim_\gamma$ where, if $D_1$ and $D_2$ are the bounded and unbounded components of $\widehat{\m C} \setminus \eta$, then $(\mc D_1,\mc D_2) = ((D_1,\XX|_{D_1}, 0)/\sim_\gamma, (D_2,\XX|_{D_2},\infty)/\sim_\gamma)$ and the quantum boundary length measures of $\mc D_1$ and $\mc D_2$ agree on $\eta$, which we call the quantum length measure of $\eta$. Moreover, if we sample $p \in \eta$ uniformly from the quantum length measure normalized to be a probability measure, then the joint law of $(D_1,\XX|_{D_1},0,p)/\sim_\gamma$ and $(D_2,\XX|_{D_2},0,p)/\sim_\gamma$ is that of $\mc D_1$ and $\mc D_2$ each with a boundary point sampled independently of each other from the normalized quantum boundary length measure. Let us define the law
\begin{equation}
    \mathrm{Weld}(\mc M_{1,0}^{\mathrm{disk}}(\gamma;\ell), \mc M_{1,0}^{\mathrm{disk}}(\gamma;\ell)) = \int \mathrm{Weld}(\mc D_1,\mc D_2) \, d[\mc M_{1,0}^{\mathrm{disk}}(\gamma;\ell)\times \mc M_{1,0}^{\mathrm{disk}}(\gamma;\ell)](\mc D_1, \mc D_2).
\end{equation}

We now state the conformal welding theorem of \cite{AHS23} for the loops that separate 0 and $\infty$, and translate it to Lemma~\ref{lem:welding-homeo}.

\begin{lem}[{\cite[Lemma~7.7]{ang2024sleloopmeasureliouville}\footnote{In place of $\m C$ and $\mu_\#^\kappa$, \cite{ang2024sleloopmeasureliouville} uses the infinite cylinder $\mc C = \m R\times \m S^1$ and the law of SLE$_\kappa$ loops on $\mc C$ separating $\pm \infty$ that touch $\{0\}\times \m S^1$ from the left but do not cross it, respectively. By the conformal invariance of SLE$_\kappa$ loops and the translation invariance of LF$_{\mc C}^{(\gamma,\pm \infty)}$ \cite[Theorem~2.13]{ahs-integrability-sle}, our statement is equivalent to \cite[Lemma~7.7]{ang2024sleloopmeasureliouville}.}}]\label{lem:acsw}
    Let $\kappa = \gamma^2 \in (0,4)$. If $(\XX,\eta)$ is sampled from $\mathrm{LF}_{{\m C}}^{(\gamma, 0),(\gamma,\infty)} \times \mu_\#^\kappa$, then the law of the loop-decorated quantum surface $(\widehat{\m C}, \XX, \eta, 0,\infty)/\sim_\gamma$ is equal to $\int_0^\infty \ell \cdot \mathrm{Weld}(\mc M_{1,0}^{\mathrm{disk}}(\gamma;\ell), \mc M_{1,0}^{\mathrm{disk}}(\gamma;\ell))\, \dd \ell$ up to a $\gamma$-dependent multiplicative constant.
\end{lem}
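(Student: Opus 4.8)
The plan is to transport the cylinder welding identity of \cite[Lemma~7.7]{ang2024sleloopmeasureliouville} to the sphere through the exponential uniformization, as indicated in the footnote. Write $\mc C = \m R\times\m S^1$ for the infinite cylinder and let $\Phi:\mc C\to\Chat\setminus\{0,\infty\}$ be the conformal map $\Phi(x+\ii\theta)=e^{x+\ii\theta}$, which carries the two ends $-\infty$ and $+\infty$ of $\mc C$ to $0$ and $\infty$ and the cross-section $\{0\}\times\m S^1$ to the unit circle $\m S^1$. Each of the three ingredients of the identity --- the whole-plane Liouville field with two $\gamma$-insertions, the $\SLE_\kappa$ loop measure on loops separating the two marked ends, and the uniform conformal welding of two quantum disks --- is defined intrinsically on (loop-decorated) quantum surfaces, hence commutes with pushforward under the fixed conformal map $\Phi$. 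The proof thus reduces to matching the three cylinder objects to their sphere counterparts in Definition~\ref{df:liouville-field} and in the statement, using the two symmetries flagged in the footnote to reconcile the choice of loop representative.

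First I would verify the field correspondence. Pushing a cylinder field $\XX_{\mc C}$ forward as a quantum surface produces the field $\XX_{\mc C}\circ\Phi^{-1} - Q\log|\cdot|$ on $\Chat$, since $\Phi'(z)=e^z$ gives $\log|\Phi'\circ\Phi^{-1}(w)| = \log|w|$. Because $\log|\Phi(z)| = \Re z$, the two $\gamma$-log insertions that $\mathrm{LF}_{\mc C}^{(\gamma,\pm\infty)}$ carries at the ends become, after this coordinate change, the $-\gamma\log|\cdot|$ and $-2(Q-\gamma)\log|\cdot|_+$ singularities at $0$ and $\infty$ in the definition of $\mathrm{LF}_{\m C}^{(\gamma,0),(\gamma,\infty)}$, while the whole-plane GFF $P_{\m C}$ is the $\Phi$-pushforward of the cylinder GFF and the additive-constant weighting $e^{2(\gamma-Q)c}\,\dd c$ records the end-to-end scaling freedom. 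The welding step needs no separate argument: $\mathrm{Weld}(\mc D_1,\mc D_2)$ is a law on quantum spheres obtained by gluing two quantum disks along independently sampled uniform quantum-length boundary points and is manifestly natural under precomposition by a conformal map, and the interface quantum length $\ell$ is a conformal invariant, so both the welded surface and the $\ell\,\dd\ell$ weighting are preserved by $\Phi$.

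The hard part will be reconciling the $\SLE_\kappa$ normalizations. The cited cylinder statement is phrased for loops separating $\pm\infty$ that touch $\{0\}\times\m S^1$ from the left without crossing it; under $\Phi$ these become loops on $\Chat$ separating $0,\infty$ that lie in $\overline{\m D}$ and touch $\m S^1$, which is a different representative of each scaling orbit than the shape measure $\mu_\#^\kappa$, normalized by unit conformal radius $\mathrm{CR}(\eta,0)=1$. To bridge the two I would use the joint invariance highlighted in the footnote: the translation $x\mapsto x+t$ on $\mc C$, equivalently the scaling $w\mapsto e^t w$ on $\Chat$, preserves the $\SLE_\kappa$ loop measure by conformal invariance and preserves $\mathrm{LF}_{\mc C}^{(\gamma,\pm\infty)}$ as a quantum surface by its translation invariance \cite[Thm.~2.13]{ahs-integrability-sle}. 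Disintegrating the $\sigma$-finite loop measure along the scaling orbit and sliding each loop to the representative with $\mathrm{CR}(\eta,0)=1$ therefore leaves the law of the decorated quantum surface unchanged apart from the Jacobian of the reparametrization, which depends only on $\gamma$ and $\kappa$ and is absorbed into the overall multiplicative constant. Combining the field, welding, and loop correspondences and collecting all scaling Jacobians into that single constant yields the stated equality.
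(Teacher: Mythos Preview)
Your proposal is correct and is precisely an expansion of the argument the paper gives: the paper does not prove this lemma independently but cites \cite[Lemma~7.7]{ang2024sleloopmeasureliouville} and justifies the change of domain and loop normalization in the footnote via the exponential map $\mc C\to\Chat\setminus\{0,\infty\}$, the conformal invariance of the $\SLE_\kappa$ loop measure, and the translation invariance of $\mathrm{LF}_{\mc C}^{(\gamma,\pm\infty)}$. Your three steps (field correspondence, welding naturality, and reconciling the loop representatives via the scaling/translation symmetry) are exactly the content that the footnote is pointing to.
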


\begin{proof}[Proof of Lemma~\ref{lem:welding-homeo}]
    Consider the disintegration 
    \[ \mathrm{LF}_{{\m C}}^{(\gamma, 0),(\gamma,\infty)} \times \mu_\#^\kappa = \int_0^\infty [\mathrm{LF}_{{\m C}}^{(\gamma, 0),(\gamma,\infty)} \times \mu_\#^\kappa](\ell)\,\dd\ell  \]
    where $[\mathrm{LF}_{{\m C}}^{(\gamma, 0),(\gamma,\infty)} \times \mu_\#^\kappa](\ell)$ is a measure on the pair $(\X,\eta)$ such that the quantum length of $\eta$ with respect to $\X$, which we denote $\nu_{\X}(\eta)$, equals $\ell$. From Definition~\ref{df:liouville-field}, we see that a version of $[\mathrm{LF}_{{\m C}}^{(\gamma, 0),(\gamma,\infty)} \times \mu_\#^\kappa](\ell)$ is given by $(\X + \frac{2}{\gamma}\log (\ell/ \nu_{\X}(\eta)),\eta)$ where $(\X,\eta)$ is sampled from the restriction of \[\frac{\ell^{-4/\gamma^2}}{\int_1^\infty t^{-4/\gamma^2}\dd t}\mathrm{LF}_{{\m C}}^{(\gamma, 0),(\gamma,\infty)} \times \mu_\#^\kappa\] to the event $\{\nu_{\X}(\eta)\geq 1\}$. In particular, note that the marginal law of $\eta$ under $[\mathrm{LF}_{{\m C}}^{(\gamma, 0),(\gamma,\infty)} \times \mu_\#^\kappa](\ell)$ is equivalent to $\mu_\#^\kappa$.
    
    Fix $\ell = 1$. Recall that the quantum disk $(\m D, \X,0)/\sim_\gamma$ with $\X$ sampled from $\mathrm{LF}_{\m D}^{(\gamma,0)}(1)^\#$ has the law $\mc M_{1,0}^{\mathrm{disk}}(\gamma;1)^\#$. Define $\phi_\XX(z)=\exp(2\pi \ii\hat\nu_\XX([1,z]))$, where $[1,z]$ denotes the arc from $1$ to $z$ counterclockwise. Suppose $\X_1,\X_2$ are independent fields sampled from $\mathrm{LF}_{\m D}^{(\gamma,0)}(1)^\#$. Observe that if $\alpha_1,\alpha_2$ are independent samples from the uniform measure on the rotation group of $\m S^1$ which are further independent from $\X_1,\X_2$, then $(\alpha_2 \circ \phi_{\X_2|_{\partial \m D}})^{-1} \circ (\alpha_1 \circ \phi_{\X_1|_{\partial \m D}})$ is the homeomorphism corresponding to the uniform conformal welding of quantum disks $\mc D_1 = (\m D, \X_1,0)/\sim_\gamma$ and $\mc D_2 = (\m D, \X_2, 0)/\sim_\gamma$. That is, if $\eta$ is the Jordan curve which corresponds to the welding homeomorphism $(\alpha_2 \circ \phi_{\X_2|_{\partial \m D}})^{-1} \circ (\alpha_1 \circ \phi_{\X_1|_{\partial \m D}})$, then there is a random field $\X$ such that $(\widehat{\m C}, \X, \eta,0,\infty)/\sim_\gamma$ has the law $\mathrm{Weld}(\mc M_{1,0}^{\mathrm{disk}}(\gamma;1)^\#, \mc M_{1,0}^{\mathrm{disk}}(\gamma;1)^\#)$. Hence, by Lemma~\ref{lem:acsw}, $\eta$ has the law $\mu_\#^\kappa$ when considered up to rotations of $\widehat{\m C}$ around 0. By the invariance of the law $\mu_\#^\kappa$ under such rotations, if we choose $\alpha_0 \in \partial \m D$ uniformly yet again independently from $\X_1,\X_2,\alpha_1,\alpha_2$, then $\alpha_0 \cdot \eta$ has the law $\mu_\#^\kappa$. 
    Observe that if we denote the rotation $z\mapsto \alpha_0 \cdot z$ also as $\alpha_0$, then the welding homeomorphism corresponding to $\alpha_0 \cdot \eta$ is given by $(\alpha_2 \circ \phi_{\X_2|_{\partial \m D}})^{-1} \circ (\alpha_1 \circ \phi_{\X_1|_{\partial \m D}}) \circ \alpha_0^{-1}$ when both are considered as random elements of $\m S^1 \backslash \Homeo_+(\m S^1)$. 
    From Definition~\ref{df:liouville-field}, we see that if $\X$ is a sample from $\mathrm{LF}_{\m D}^{(\gamma,0)}(1)^\#$, then the law of $\X|_{\partial \m D}$ is equivalent to that of $h - \frac{2}{\gamma}\nu_h(\m S^1)$ where $h$ an LGF on $\m S^1$. That is, the law of $\phi_{\X|_{\partial \m D}}$ is equivalent to that of $\phi_{h - \frac{2}{\gamma}\nu_h(\m S^1)} = \phi_h$. 
    
    Thus, to paraphrase our conclusion so far, if $h_1,h_2$ are LGFs on $\m S^1$ and $\alpha_0,\alpha_1,\alpha_2$ are uniform random variables on $\m S^1$, all mutually independent, the pushforward of the law of $\phi_{h_2}^{-1} \circ (\alpha_2^{-1} \circ \alpha_1) \circ \phi_{h_1} \circ \alpha_0^{-1}$ under the projection $\Homeo_+(\m S^1) \to \m S^1 \backslash \Homeo_+(\m S^1)$ is equivalent to $\SLE_\kappa^{\mathrm{weld}}$. Since the law of LGF on $\m S^1$ is invariant under fixed rotations of $\m S^1$ and $\alpha_0$ is independent of $h_1$, we have that $\phi_{h_1}(\a_0^{-1}(1))^{-1}\cdot\phi_{h_1} \circ \alpha_0^{-1}$ agrees in law with $\phi_{h_1}$. Moreover, $\alpha_2^{-1} \circ \alpha_1$ is a uniform rotation of $\m S^1$ independent from $h_1$, $h_2$, and $\alpha_0$, so we obtain the first part of the lemma.

    For the second part of the lemma, we observe that $e^{\ii \T}:= (\alpha_2 \circ \phi_{\X_2|_{\partial \m D}})^{-1} \circ (\alpha_1 \circ \phi_{\X_1|_{\partial \m D}})(1) = (\phi_{\X_2|_{\partial \m D}})^{-1}\circ (\alpha_2^{-1} \circ \alpha_1)(1)$ is, conditioned on $\X_2$, a point sampled uniformly from the boundary length measure $\nu_{\X_2}$ normalized to be a probability measure. By Lemma~\ref{lem:add-point}, we see that $e^{-i\T} \cdot (\phi_{\X_2|_{\partial \m D}})^{-1}\circ (\alpha_2^{-1} \circ \alpha_1)$ agrees in law with $\phi_{\tilde \X_2}^{-1}$ where $\tilde \X_2$ is sampled from $\mathrm{LF}_{\m D}^{(\gamma,0),(\gamma,1)}(1)^\#$. Since the law of $\tilde \X_2|_{\partial \m D}$ is equivalent to that of $h_2 - \gamma \log |\cdot - 1|$ from Definition~\ref{df:liouville-field}, the second part of the lemma follows as in the previous paragraph.
    \end{proof}
\subsection{Equivalence for other weldings}
We end with a few remarks on generalizations of Theorem \ref{thm:main}. From Lemma \ref{lem log difference}, for each $\varphi \in \WP(\m S^1)$ and almost every $z_0 \in \m S^1$, we have that 
\begin{equation}
    \hat \varphi(e^{\ii \theta}):= \frac{1}{\varphi(z_0)}\varphi(z_0e^{\ii \theta})
\end{equation}
satisfies 
\begin{equation}
    \mathrm{Re}\,u_{\hat \varphi}(\cdot,1) = \log\abs{\frac{\hat\varphi(\cdot)-\hat\varphi(1)}{\cdot-1}} \in H^{1/2}(\m S^1).
\end{equation}
Hence, the asymmetry in Theorem \ref{thm:main} can be seen to be associated with our choice of stabilizers of 1 as the representatives of $\m S^1\backslash \CR(\m S^1)$. 
We record a symmetric version of Theorem \ref{thm:main} without the ``special'' point 1.
\begin{cor}\label{thm:quasi-invariance of the uniform SLE welding}
    For $\k\in (0,4)$, sample $\psi_\k$ from $\SLE_\kappa^{\mathrm{weld}}$ and $z_0$ from a probability measure that is mutually absolutely continuous with respect to the arc-length measure on the unit circle. Define $\hat\psi_\k(\cdot):=z_0\psi_\k(\cdot)$. Then, for any $\varphi \in \WP(\m S^1)$, the laws of $\hat\psi_\k \circ \varphi $ and $\varphi^{-1} \circ \hat\psi_\k $ are mutually absolutely continuous with respect to that of $\hat \psi_\k$.
\end{cor}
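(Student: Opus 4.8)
The plan is to reduce the corollary to Theorem~\ref{thm:main} by factoring a general $\varphi\in\WP(\m S^1)$ as the composition of a rotation with a Weil--Petersson homeomorphism fixing $1$, and by showing that the rotation so produced is harmless because it is absorbed into the already rotation--smeared marked point $z_0$. Write $X\overset{d}{\approx}Y$ to mean that the laws of two random elements $X,Y$ are mutually absolutely continuous. I would first record two stability properties of $\mathrm{Law}(\hat\psi_\k)$: (i) it is unchanged, up to $\overset{d}{\approx}$, if $z_0$ is replaced by any independent random variable whose law is mutually absolutely continuous with the arc length --- immediate, since $\mathrm{Law}(\hat\psi_\k)$ is the pushforward of $(\mathrm{Law}\, z_0)\times\SLE_\k^{\mathrm{weld}}$ under $(z,\psi)\mapsto z\psi$ and mutual absolute continuity is preserved by products and pushforwards; and (ii) for every fixed rotation $\sigma\in\mathrm{Rot}(\m S^1)$ one has $\hat\psi_\k\circ\sigma\overset{d}{\approx}\hat\psi_\k$.

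For (ii), write $\psi_\k=\psi_\eta$ with $\eta\sim\mu_\#^\k$ and Riemann maps $f,g$ normalized as in Definition~\ref{df: unit cr corresp}; for $\sigma(z)=\sigma_0 z$, chasing the normalizations of $f\circ\sigma$ and $g$ yields $\psi_\eta\circ\sigma=q\cdot C$, where $q:=\psi_\eta(\sigma_0)\in\m S^1$ and $C$ is the canonical welding of the curve $\sigma_0^{-1}\eta$. Since $z\mapsto\sigma_0^{-1}z$ fixes $0,\infty$ and preserves the conformal radius from $0$, conformal invariance of the $\SLE_\k$ loop measure gives $C\overset{d}{=}\psi_\k$. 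Disintegrating over $C$: as $z_0\perp\eta$, conditionally on $C$ the point $z_0q$ is a fixed rotation of $z_0$ and hence still has a law $\approx$ arc length, so the conditional law of $\hat\psi_\k\circ\sigma=(z_0q)\cdot C$ given $C$ is the image under $t\mapsto t\cdot C$ of a measure $\approx$ arc length --- as is the conditional law of $\hat\psi_\k=z_0\psi_\k$ given $\psi_\k$, using $C\overset{d}{=}\psi_\k$. Pushing a measure $\approx$ arc length forward by a fixed injective continuous map preserves mutual absolute continuity, so (ii) follows.

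Post-composition is then short: given $\varphi\in\WP(\m S^1)$, set $w_0:=\varphi^{-1}(1)$ and let $\sigma$ be the rotation $z\mapsto w_0^{-1}z$, so $\varphi_1:=\varphi\circ\sigma^{-1}\in\WP(\m S^1)$ fixes $1$; by the first item of Theorem~\ref{thm:main} $\psi_\k\circ\varphi_1\overset{d}{\approx}\psi_\k$, whence $\hat\psi_\k\circ\varphi=z_0(\psi_\k\circ\varphi_1\circ\sigma)\overset{d}{\approx}z_0(\psi_\k\circ\sigma)=\hat\psi_\k\circ\sigma\overset{d}{\approx}\hat\psi_\k$ by (ii). For pre-composition, fix $z_0$ and factor $\varphi^{-1}\circ\rho_{z_0}=\rho_{p_0}\circ\varphi_{z_0}$, where $\rho$ denotes rotation, $p_0:=\varphi^{-1}(z_0)$, and $\varphi_{z_0}(w):=p_0^{-1}\varphi^{-1}(z_0 w)\in\WP(\m S^1)$ fixes $1$; then $\varphi^{-1}\circ\hat\psi_\k=\rho_{p_0}\circ\varphi_{z_0}\circ\psi_\k$. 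Applying the second item of Theorem~\ref{thm:main} to $\varphi_{z_0}^{-1}$ requires $\log|(\varphi_{z_0}^{-1}(\cdot)-1)/(\cdot-1)|\in H^{1/2}(\m S^1)$, which by the $\QS$-pullback invariance of $H^{1/2}(\m S^1)$ is equivalent to $\mathrm{Re}\,u_{\varphi_{z_0}}(\cdot,1)\in H^{1/2}(\m S^1)$; the substitution $v=z_0 w$ gives $u_{\varphi_{z_0}}(w,1)=\log(z_0/p_0)+u_{\varphi^{-1}}(z_0 w,z_0)$, so $u_{\varphi_{z_0}}(\cdot,1)$ is a constant plus $u_{\varphi^{-1}}(\cdot,z_0)$ pre-composed with a rotation, and Lemma~\ref{lem log difference} applied to $\varphi^{-1}\in\WP(\m S^1)$ makes this hold for arc-length-a.e.\ $z_0$. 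Hence $\varphi_{z_0}\circ\psi_\k\overset{d}{\approx}\psi_\k$ for a.e.\ $z_0$; disintegrating over $z_0$ gives $\mathrm{Law}(\varphi^{-1}\circ\hat\psi_\k)\approx\mathrm{Law}(\varphi^{-1}(z_0)\cdot\psi_\k')$ with $\psi_\k'\sim\SLE_\k^{\mathrm{weld}}$ independent of $z_0$, and since $\varphi^{-1}\in\WP(\m S^1)$ is absolutely continuous with a.e.\ positive derivative and maps null sets to null sets, $\mathrm{Law}(\varphi^{-1}(z_0))\approx$ arc length, so (i) identifies this with $\mathrm{Law}(\hat\psi_\k)$.

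The main obstacle is step (ii): the invariance of $\mathrm{Law}(\hat\psi_\k)$ under pre-composition by a fixed rotation, which is where the conformal-welding description of $\SLE_\k^{\mathrm{weld}}$ and the rotation invariance of the shape measure $\mu_\#^\k$ enter, together with the elementary but essential fact that convolving an independent measure $\approx$ arc length on the circle group with any probability measure again yields a measure $\approx$ arc length. A secondary subtlety is matching the log-ratio hypothesis of the second item of Theorem~\ref{thm:main} for the rotated map $\varphi_{z_0}$ with the almost-everywhere-in-$z_0$ conclusion of Lemma~\ref{lem log difference} for $\varphi^{-1}$, and using that $\varphi^{-1}\in\WP(\m S^1)$ preserves the absolute-continuity class of $\mathrm{Law}(z_0)$.
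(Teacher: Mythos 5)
Your proof is correct and follows essentially the same route the paper intends: factor $\varphi$ through a rotation into a Weil--Petersson map fixing $1$, invoke Theorem~\ref{thm:main}, and use Lemma~\ref{lem log difference} to verify the log-ratio hypothesis for the rotated map $\varphi_{z_0}$ for a.e.\ $z_0$. The only difference is that you make explicit the rotation-equivariance bookkeeping (the identity $\psi_\eta\circ\sigma=q\cdot C$ and the rotation invariance of $\mu_\#^\kappa$) that the paper leaves implicit, and those computations check out.
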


The homeomorphism $\phi_h^\g$ giving the normalized GMC measures $\widehat{\mc M}_h^\g$ of intervals was first introduced in \cite{AJKS}, which showed using analytic methods that if $h_1$ and $h_2$ are independent LGFs on $\m S^1$ and $\g_1,\g_2\geq 0$ are sufficiently small, then $(\phi_{h_1}^{\g_1})\circ (\phi_{h_2}^{\g_2})^{-1}$ is almost surely a welding. (Let $\phi_h^\g$ be the identity map if $\g=0$.) The works \cite{inverse-gmc-welding} or \cite{CWofGMC} showed that $(\phi_{h_1}^{\g_1})^{-1}\circ (\phi_{h_2}^{\g_2})$ is almost surely a welding as well for sufficiently small $\g_1,\g_2$. A notable feature of these works is that the constants $\g_1$ and $\g_2$ can be distinct; however, the law of the Jordan curves that solve the welding problem for these homeomorphisms were not identified.
Our analysis leads to the following quasi-invariance results for these random homeomorphisms.
\begin{cor}\label{cor quasi-invariance of GMC welding}
    Let $\varphi,\tilde \varphi \in \Homeo_+(\m S^1)$ fix $1$. For independent LGFs $h,\tilde h$ on $\m S^1$ and $\g,\tilde \g \in (0,2]$, we have:
    \begin{itemize}
        \item The law of $\phi_h^\g\circ \varphi$ is mutually absolutely continuous with respect to the law of $\phi_h^\g$ if and only if $\varphi \in \WP(\m S^1)$.
        \item The law of $ \varphi\circ (\phi_h^\g)^{-1}$ is mutually absolutely continuous with respect to the law of $(\phi_h^\g)^{-1}$ if and only if $\varphi \in \WP(\m S^1)$.
        \item The law of $ \varphi\circ (\phi_h^\g)^{-1} \circ \phi_{\tilde h}^{\tilde \g} \circ \tilde \varphi$ is mutually absolutely continuous with respect to the law of $(\phi_h^\g)^{-1} \circ \phi_{\tilde h}^{\tilde \g} $ if $\varphi$, $\tilde \varphi \in \WP(\m S^1)$.
    \end{itemize}
\end{cor}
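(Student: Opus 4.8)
The plan is to deduce all three statements of Corollary~\ref{cor quasi-invariance of GMC welding} from the quasi-invariance of normalized GMC measures (Proposition~\ref{prop:quasi-invariance of weighted Liouville measure} and Corollary~\ref{cor: one marked boundary}) by re-expressing each homeomorphism as a deterministic functional of the relevant normalized GMC measure. The key observation is that $\phi_h^\g$ is (up to the choice of base point $1$) the unique increasing homeomorphism of $\m S^1$ pushing $\widehat{\mc M}_h^\g$ forward to the normalized arc-length measure, so $\phi_h^\g$ is a measurable function of $\widehat{\mc M}_h^\g$; conversely, $\widehat{\mc M}_h^\g = (\phi_h^\g)^*(\text{arc-length}/2\pi)$ is a measurable function of $\phi_h^\g$. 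Thus the map $\widehat{\mc M}_h^\g \mapsto \phi_h^\g$ is a measurable bijection between the law of the normalized GMC and the law of $\phi_h^\g$, and likewise for $(\phi_h^\g)^{-1}$.

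For the first bullet, I would argue as follows. Precomposing $\phi_h^\g$ by $\varphi$ gives the homeomorphism that pushes $\varphi^* \widehat{\mc M}_h^\g$ forward to arc-length measure (after suitable renormalization of the base point, which is handled by the assumption $\varphi(1)=1$). Hence the law of $\phi_h^\g \circ \varphi$ is the pushforward, under the measurable map ``normalized measure $\mapsto$ associated homeomorphism,'' of the law of $\varphi^*\widehat{\mc M}_h^\g$. Since this map is a measurable bijection onto its image, the law of $\phi_h^\g\circ\varphi$ is mutually absolutely continuous with respect to the law of $\phi_h^\g$ if and only if the law of $\varphi^*\widehat{\mc M}_h^\g$ is mutually absolutely continuous with respect to the law of $\widehat{\mc M}_h^\g$, which by Proposition~\ref{prop:quasi-invariance of weighted Liouville measure} happens exactly when $\varphi\in\WP(\m S^1)$. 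The second bullet is the same statement read for the inverse map: $\varphi\circ(\phi_h^\g)^{-1} = (\phi_h^\g\circ\varphi^{-1})^{-1}$, $\varphi^{-1}\in\WP(\m S^1)\iff \varphi\in\WP(\m S^1)$ since $\WP(\m S^1)$ is a group, and inversion is a measurable bijection of $\Homeo_+(\m S^1)$, so the conclusion transfers.

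For the third bullet, I would condition on $h$ (equivalently on $\phi_h^\g$, equivalently on $(\phi_h^\g)^{-1}$), which is independent of $\tilde h$. Working with the representatives stabilizing $1$, apply the second bullet with the LGF-plus-singularity field: by Corollary~\ref{cor: one marked boundary} with the logarithmic insertion at $1$ (matching the structure of $\phi_{\tilde h}^{\tilde\g}$ in Lemma~\ref{lem:welding-homeo}), or more simply by the first-bullet argument applied to $\phi_{\tilde h}^{\tilde\g}\circ\tilde\varphi$, the law of $\phi_{\tilde h}^{\tilde\g}\circ\tilde\varphi$ is mutually absolutely continuous with respect to that of $\phi_{\tilde h}^{\tilde\g}$ when $\tilde\varphi\in\WP(\m S^1)$. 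Post-composing the identity by the fixed (conditionally deterministic) homeomorphism $\varphi\circ(\phi_h^\g)^{-1}$ on the left preserves mutual absolute continuity of laws on $\Homeo_+(\m S^1)$ since left-composition by a fixed homeomorphism is a measurable bijection; hence, conditionally on $h$, the law of $\varphi\circ(\phi_h^\g)^{-1}\circ\phi_{\tilde h}^{\tilde\g}\circ\tilde\varphi$ is mutually absolutely continuous with respect to that of $\varphi\circ(\phi_h^\g)^{-1}\circ\phi_{\tilde h}^{\tilde\g}$, with a Radon--Nikodym derivative depending measurably on $h$. Integrating out $h$ and then applying the same reasoning in the $h$-variable (now using that left-composition by the deterministic $\varphi$ and the measurable bijection ``measure $\leftrightarrow$ homeomorphism'' intertwine the $\WP$-action) gives mutual absolute continuity of the full law.

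The main obstacle I anticipate is bookkeeping around base points: $\phi_h^\g$ as defined sends $1\mapsto 1$, but the welding homeomorphisms in play are only canonically defined modulo rotations (elements of $\m S^1\backslash\Homeo_+(\m S^1)$), so I must be careful that precomposition by $\varphi$ with $\varphi(1)=1$ genuinely corresponds to the pullback $\varphi^*\widehat{\mc M}_h^\g$ and not to a rotated version, and that the measurable inverse map ``homeomorphism $\mapsto$ normalized measure'' is well-defined on the relevant coset space. A secondary point requiring care, already flagged by the appearance of the log-ratio condition in Theorem~\ref{thm:main} and Corollary~\ref{cor: one marked boundary}, is that the field relevant to $(\phi_{\tilde h}^{\tilde\g})$ carrying a logarithmic insertion at $1$ forces the use of Corollary~\ref{cor: one marked boundary} rather than the plain Proposition~\ref{prop:quasi-invariance of weighted Liouville measure}; since the statement here puts the marked point at $1=\varphi(1)=\tilde\varphi(1)$, the log-ratio term $u_{\tilde\varphi}(\cdot,1)$ lies in $H^{1/2}(\m S^1)$ by Lemma~\ref{lem log difference} whenever $\tilde\varphi\in\WP(\m S^1)$ and $1$ is a good base point, but one should note that for a general $\tilde\varphi\in\WP(\m S^1)$ this is only guaranteed for a.e.\ base point, so the ``if'' direction here should be stated under the hypothesis that $u_{\tilde\varphi}(\cdot,1)\in H^{1/2}(\m S^1)$ (equivalently, one restricts to the randomized base point as in Corollary~\ref{thm:quasi-invariance of the uniform SLE welding}).
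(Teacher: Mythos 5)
Your proposal is correct and follows the same route the paper intends: the paper gives no separate proof of this corollary, deriving it directly from Proposition~\ref{prop:quasi-invariance of weighted Liouville measure} via the observation that $\mu \mapsto \exp(2\pi\ii\,\mu([1,\cdot]))$ is a bi-measurable bijection between normalized atomless measures of full support and homeomorphisms fixing $1$, together with inversion, the group property of $\WP(\m S^1)$, and conditioning on one field at a time for the third bullet. Your closing caveat is, however, unnecessary: in this corollary $\tilde h$ is a plain LGF with no logarithmic insertion, so the first bullet (via Proposition~\ref{prop:quasi-invariance of weighted Liouville measure}) applies directly and no hypothesis $u_{\tilde\varphi}(\cdot,1)\in H^{1/2}(\m S^1)$ is needed --- that condition only enters for the SLE welding of Theorem~\ref{thm:main} and Lemma~\ref{lem:welding-homeo}, where the second field carries the singularity $-\gamma\log|\cdot-1|$ and Corollary~\ref{cor: one marked boundary} must be invoked.
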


\newpage
\bibliographystyle{alpha}
\bibliography{main}

\end{document}